
\documentclass[a4paper]{amsart}

\usepackage{amsmath}
\usepackage{amssymb}
\usepackage{eucal}
\usepackage{pstricks}
\usepackage{pstricks-add}
\psset{nodesep=2pt,labelsep=1pt,arrowscale=1.2}
\usepackage{enumerate}
\usepackage{url}
\usepackage[normalem]{ulem}

\theoremstyle{plain}
\newtheorem{theorem}{Theorem}[section]
\newtheorem{proposition}[theorem]{Proposition}
\newtheorem{lemma}[theorem]{Lemma}
\newtheorem{corollary}[theorem]{Corollary}
\theoremstyle{definition}
\newtheorem*{definition}{Definition}
\theoremstyle{remark}
\newtheorem*{remark}{Remark}

\newcommand{\injto}{\hookrightarrow}
\newcommand{\End}{\operatorname{End}}
\newcommand{\Aut}{\operatorname{Aut}}
\newcommand{\Age}{\operatorname{Age}}
\newcommand{\pTp}{\operatorname{Tp}^{\exists_1^+}}
\newcommand{\pTh}{\operatorname{Th}^{\exists_1^+}}
\newcommand{\Tp}{\operatorname{Tp}}
\newcommand{\Th}{\operatorname{Th}}
\newcommand{\cAut}{\operatorname{cAut}}
\newcommand{\wAut}{\operatorname{wAut}}
\newcommand{\sAut}{\operatorname{sAut}}
\newcommand{\Col}{\operatorname{Col}}
\newcommand{\bA}{\mathbf{A}}
\newcommand{\bB}{\mathbf{B}}
\newcommand{\bC}{\mathbf{C}}
\newcommand{\bD}{\mathbf{D}}
\newcommand{\bF}{\mathbf{F}}
\newcommand{\bK}{\mathbf{K}}
\newcommand{\bL}{\mathbf{L}}
\newcommand{\bT}{\mathbf{T}}
\newcommand{\bU}{\mathbf{U}}
\newcommand{\cA}{\mathcal{A}}
\newcommand{\cB}{\mathcal{B}}
\newcommand{\cC}{\mathcal{C}}
\newcommand{\cF}{{\mathcal{F}}}
\newcommand{\cK}{\mathcal{K}}
\newcommand{\cL}{\mathcal{L}}
\newcommand{\cU}{\mathcal{U}}
\newcommand{\ba}{\bar{a}}
\newcommand{\bb}{\bar{b}}
\newcommand{\bc}{\bar{c}}
\newcommand{\bd}{\bar{d}}
\newcommand{\bu}{\bar{u}}
\newcommand{\bx}{\bar{x}}

\newcommand{\fA}{\mathfrak{A}}
\newcommand{\fB}{\mathfrak{B}}
\newcommand{\fC}{\mathfrak{C}}
\newcommand{\fD}{\mathfrak{D}}

\newcommand{\CSP}{\operatorname{CSP}}
\newcommand{\HE}{{\mathcal{E}}}
\newcommand{\Forb}{{\mathit{Forb}}}
\newcommand{\KLF}{{\mathcal{K}_{\mathcal{L}\mathcal{F}}}}
\newcommand{\Fraisse}{Fra\"\i{}ss\'e}

\title[Universal homogeneous constraint structures\dots]{Universal homogeneous constraint structures and the hom-equivalence classes of weakly oligomorphic structures}
\author{Christian Pech}
\address{Christian Pech\\L\"o\ss nitzgrundstra\ss e 47\\01445 Radebeul\\Germany}
\email{cpech@freenet.de}
\author{Maja Pech}
\address{Maja Pech\\Department of Mathematics and Informatics\\University of Novi Sad\\Trg Dositeja Obradovi\'ca 4\\ 21000 Novi Sad\\Serbia}
\thanks{supported by the Ministry of Education and Science of the Republic of Serbia through Grant No.174018, and by a grant (Contract 114-451-1901/2011) of the the Secretariat of Science and Technological Development of the Autonomous Province of Vojvodina.}
\email{maja@dmi.uns.ac.rs}
\subjclass[2010]{03C15 (03C50,05C99,03C35)}
\keywords{universal structure, homomorphism-equivalence, $\aleph_0$-categoricity, oligomorphic permutation group, homogeneous structure, endomorphism-monoid}
\begin{document}

\begin{abstract}
We derive a new sufficient condition for the existence of $\aleph_0$-categorical universal structures in classes of relational structures with constraints, augmenting results by Cherlin, Shelah, Chi \cite{CheSheChi99}, and Hubi\v{c}ka and Ne\v{s}et\v{r}il \cite{HubNes09b}. 

Using this result we show that  the  hom-equivalence class of any countable weakly oligomorphic structure  has up to isomorphism a unique model-complete smallest and  greatest element, both of which are $\aleph_0$-categorical. 

As the main tool we introduce the category of constraint structures, show the existence of  universal homogeneous objects, and study their automorphism groups.

All constructions rest on a category-theoretic version of \Fraisse's Theorem due to Droste and G\"obel. We derive sufficient conditions for a comma category to contain a universal homogeneous object. 

This research is motivated by the observation that all countable models of the theory of a weakly oligomorphic structure are hom-equivalent---a result akin to (part of) the Ryll-Nardzewski Theorem.
\end{abstract}
\maketitle

\section*{Introduction}
Weak oligomorphy is a natural weakening of the notion of oligomorphy. The latter concept was coined by Peter Cameron in the 1970$^{\text{th}}$ and it turned out to be fundamental in several fields of mathematics \cite{Cam90}. Not only many combinatorial enumeration problems have a natural encoding by oligomorphic permutation groups, but the Engeler-Ryll-Nardzewski-Svenonius Theorem links oligomorphic permutation groups to $\aleph_0$-categorical structures and hence to model theory \cite{Hod97}. Both notions, oligomorphy and $\aleph_0$-categoricity, are in turn closely related to the concept of (ultra-) homogeneity and thus with the theory of \Fraisse-limits \cite{Fra53,Mac11}. 

In their seminal paper \cite{CamNes06}, Peter Cameron and Jaroslav Ne\v{s}et\v{r}il introduced several variations to the concept of homogeneity, one of them being homomorphism-homogeneity---saying that every homomorphism between finitely generated substructures of a given structure extends to an endomorphism of that structure. The relevance of this notion in the theory of transformation monoids on countable sets was realized quickly \cite{Dol12,Dol12b,MPPhD,AUpaper,Pon05}. Also a classification theory for homomorphism-homogeneous structures emerged quickly \cite{CamLoc10,DolMas11,IliMasRaj08,JunMas12,Mas07,Mas12,MasNenSko11} and classes of high complexity of finite homomorphism-homogeneous structures were discovered \cite{Mas12b,RusSch10}.

Weak oligomorphy is a phenomenon that arrises naturally in the context of homomorphism-homogeneity. 
A countable relational structure is  weakly oligomorphic if  its endomorphism monoid is oligomorphic, i.e., it has of every arity only finitely many invariant relations. It is not hard to see that every homomorphism-homogeneous relational structure over a finite signature is weakly oligomorphic.  It is less obvious that every weakly oligomorphic relational structure has a positive existential expansion that is homomorphism-homogeneous (cf. \cite{MasPec11,MPPhD,AUpaper}). Clearly, every oligomorphic structure is weakly oligomorphic, but the reverse does not hold, in general. 

It turns out that weak oligomorphy is the key for the development of a model-theory of homomorphism-homogeneous structures running in parallel to the model theory of homogeneous structures \cite{MasPec11,AUpaper}. However, we think that in order to be sustainable, the theory of weak oligomorphy and homomorphism-homogeneity has not only to exist for its own sake but it has to augment the branches of mathematics from whom it was inspired. First steps into this direction were undertaken in \cite{AgeHHnew}, where it was shown that every weakly oligomorphic structure is homomorphism-equivalent to an oligomorphic substructure. This, e.g., has the consequence that every constraint satisfaction problem with a weakly oligomorphic template is equivalent to one with an $\aleph_0$-categorical template. Another consequence is that a set of positive existential propositions is the complete positive existential theory of a weakly oligomorphic structure if and only if it is the positive existential part of an $\aleph_0$-categorical theory. 

In this paper we are going  further into the direction of creating cross-links between the theory of weakly oligomorphic structures and the theory of oligomorphic structures.

Our foremost interest belongs  to the hom-equivalence classes of weakly oligomorphic structures.  In the Section~\ref{s2} we give some motivations, why we are interested in the structure of hom-equivalence classes of weakly oligomorphic structures. While the Ryll-Nardzewski Theorem says that any countable oligomorphic structure  is up to isomorphism determined by its first order theory, we will show that countable weakly oligomorphic structures are up to homomorphism equivalence determined by their first order theory. This property is called weak $\aleph_0$-categoricity. 

In Section~\ref{s3}, we describe the extremal  elements of hom-equivalence classes of weakly oligomorphic structures with respect to inclusion. We show that every such class has a canonical largest and smallest element both of which are oligomorphic and model-complete (i.e. finite or $\aleph_0$-categorical). In fact we prove a more general result about the existence of universal elements in given classes of countable relational structures that augments results from \cite{CheSheChi99,Hub10,HubNes09}. 

In Section~\ref{s4} we introduce and study constraint structures. These are basically solutions to instances of constraint satisfaction problems. More precisely, for a template $\bT$ a $\bT$-colored structure is a pair $(\bA,a)$ such that $a$ is a homomorphism from $\bA$ to $\bT$. We introduce strong and weak homomorphisms between constraint structures and show the existence of universal homogeneous $\bT$-colored structures. This is done using a categorical variant of \Fraisse's theorem due to Droste and G\"obel \cite{DroGoe92,DroGoe93}. Universal homogeneous constraint structures are used for proving the existence result of universal elements from Section~\ref{s3} and thus, ultimately they give rise to universal elements in hom-equivalence classes of countable relational structures. We give conditions under which the strong or weak automorphism group of a universal homogeneous constraint structure is oligomorphic.

Universal homogeneous constraint structures are closely related to monotone free amalgamation classes (indeed, if the universe in which they live is monotone, then the age of a universal homogeneous constraint structure is monotone and free). For relational structures these two properties have strong consequences. From one hand, given that the relational signature is finite,  it means that the automorphism group of the \Fraisse-limit has the small index property \cite{Her98,Mac11}, and hence the \Fraisse-limit of the class can be reconstructed up to first order interpretation from its automorphism group (considered as abstract group). From the other hand, for any monotone free \Fraisse-class $\cC$, the class of linear ordered structures $(\cC,\prec)$ is a Ramsey-class \cite{HubNes09,NesRoe77}. In Section~\ref{s5} we give sufficient conditions for the (strong) automorphism group of a universal homogeneous constraint structure to have the small index property, the Bergman-property, and uncountable cofinality (cf. \cite{Ber06,DroGoe05,KecRos07}).

Finally, in Section~\ref{s6}, we develop the necessary categorical framework  for the construction of universal homogeneous constraint structures. In particular, we give sufficient conditions under which a comma-category contains a universal homogeneous object. This is the technical backbone of our approach to universal structures. Its high level of abstraction  makes it perhaps seem like a tool too big  for the task at hand. However, due to its categorical nature it has potentially a much wider field of application. This, in our opinion, makes it  worthwhile  to be included into this paper.

\section{Preliminaries}
This paper deals (mainly) with relational structures. Here, under a relational structure we understand a model-theoretic structure without operations and constants. Accordingly, a relational signature is a model-theoretic signature without operational- or constant-symbols. Whenever we do not state otherwise, a relational signature can contain any number of relational symbols. Relational structures will be denoted like $\bA,\bB,\bC,\dots$. There carriers are denoted like $A,B,C,\dots$. Tuples are denoted like $\ba,\bb,\bc$, and usually $a_i$ will denote the $i$-th coordinate of $\ba$, etc.

As usual, a homomorphism between  relational structures is a
function between the carriers that preserves all relations.

If $f:\bA\to\bB$, then we call $\bA$ the \emph{domain} of $f$ and $\bB$, the \emph{codomain}. Moreover, the
structure induced by
$f(A)$ is called the \emph{image} of $f$. 

\emph{Epimorphisms} are surjective homomorphisms and \emph{monomorphisms} are
injective homomorphisms. Isomorphisms are bijective homomorphisms
whose inverse is a homomorphism, too. \emph{Embeddings} are monomorphisms that not only
preserve relations but also reflect them. That is, a monomorphism is
an embedding if and only if it is an isomorphism to its image.

For classes $\cA$, $\cB$ of relational structures we write $\cA\to\cB$ if for every $\bA\in\cA$ there exists a $\bB\in\cB$ and a homomorphism $f:\bA\to\bB$. Instead of $\{\bA\}\to\cB$ we write $\bA\to\cB$ and instead of $\cA\to \{\bB\}$ we write $\cA\to\bB$. An important special case is the notion $\bA\to\bB$ which means that there is a homomorphism from $\bA$ to $\bB$. If $\bA\to\bB$ and
$\bB\to\bA$, then we call $\bA$ and $\bB$ \emph{homomorphism-equivalent}. 

If $\cA$ is a class of relational structures over a signature $R$, then by $(\cA,\to)$ and $(\cA,\injto)$ we will denote the categories of objects from $\cA$ with homomorphisms or embeddings as morphisms, respectively.

On some occasions we will need the notion of the \emph{Gaifman-graph} of a relational structure $\bA$. This is a simple graph whose vertex set is $A$, such that two vertices are joint by an edge whenever they occur together in a tuple from one of the basic relations of $\bA$. We will denote the Gaifman-graph of $\bA$ by $\Gamma_\bA$. We call $\bA$ \emph{connected} if $\Gamma_\bA$ is connected and we call $\bA$ tight if its Gaifman-graph is complete.

\subsection*{Ages of relational structures.} The \emph{age} of a relational structure $\bA$ is the class of all finite relational structures that embed into $\bA$ (it is denoted by $\Age(\bA)$).

Let $\cC$ be a class of finite relational structures over the same signature. We say that $\cC$ has the \emph{Joint embedding property (JEP)} if whenever $\bA,\bB\in \cC$, then there exists a $\bC\in\cC$ such that both $\bA$ and $\bB$ are embeddable in $\bC$. 
Moreover, $\cC$ has the \emph{Hereditary property (HP)} if whenever $\bA\in \cC$, and $\bB< \bA$,    then $\bB$ is isomorphic to some $\bC\in \cC$.
We say that $\cC$ has the \emph{amalgamation property (AP)} if whenever $\bA, \bB_1,\bB_2 \in\cC$, and $f_1:\bA\to \bB_1$ and $f_2:\bA\to\bB_2$ are embeddings, then there are $\bC\in \cC$, and embeddings $g_1:\bB_1\to \bC$ and $g_2:\bB_2\to\cC$    such that $g_1\circ f_1=g_2\circ f_2$.  Finally, $\cC$ has the \emph{homo-amalgamation property (HAP)} if whenever $\bA, \bB_1,\bB_2 \in \cC$, $f_1:\bA\to \bB_1$ is a homomorphism, and $f_2:\bA\to \bB_2$ is an embedding, then there are $\bC\in \cC$, an embedding    $g_1:\bB_1\to \bC$ , and a homomorphism $g_2:\bB_2\to\cC$ such that $g_1\circ f_1=g_2\circ f_2$.   

A basic theorem by Roland \Fraisse{} states that a class of finite
structures of the same type is the age of a countable structure if and
only if 
\begin{enumerate}
\item it has only countably many isomorphism types,
\item it is isomorphism-closed,
\item it has the  (HP) and the  (JEP).
\end{enumerate}
A class of finite structures with these properties will be called an \emph{age}. If $\cC$ is an age, then by $\overline{\cC}$ we denote the class of all countable structures whose age is contained in $\cC$. 

\subsection*{Constraint satisfaction problems.} For a relational structure $\bT$, with $\CSP(\bT)$ we denote the class of all those finite relational structures $\bA$ for which $\bA\to\bT$. The notion $\CSP$ comes from theoretical computer science and reads \emph{constraint satisfaction problem}. It refers to the problem to decide whether for a given finite relational structure $\bA$ there exists a homomorphism into $\bT$. If $\bT$ is finite over a finite signature, then $\CSP(\bT)$ is called a finite constraint satisfaction problem. Clearly, finite CSPs are in NP. It is conjectured that every finite constraint satisfaction problem is either in P or it is NP-complete (originally, this was stated by Feder and Vardi in \cite{FedVar99}). Such  dichotomy-conjectures  are very intensively studied in theoretical computer science and in universal algebra. It is impossible to give a reasonable bibliographic overview of this topic but a few important step-stones are \cite{BarKoz09,BarKozNiv08,BerIdzMarMcKVal10,Bod08,Bod12,BodNes06,BodPin11,BulKroJea05,Bul06,HelNes90,Sch78}. In this paper we are not concerned with complexity aspects of CSPs. Rather, CSPs appear in the context of our research as analogues of ages. Yet, our structural results may be of some interest also for the research on the complexity -classification of constraint satisfaction problems. E.g., a consequence of Theorem~\ref{univ-struc} is that every CSP over  a finite or $\aleph_0$-categorical template is equivalent to a substructure-problem and to a weak substructure problem of an $\aleph_0$-categorical template. 

We will use the operator $\CSP$ for all kinds of relational structures over arbitrary relational signatures. It was observed by Feder (cf. also \cite[Lemma 1]{Bod08}) that a class of relational structures is the constraint satisfaction problem of a structure if and only if it is closed with respect disjoint union and inverse homomorphisms (a class $\cC$ of finite relational structures is closed with respect to inverse homomorphisms if whenever $\bB\in\cC$ and $\bA$ is finite such that $\bA\to\bB$, then also $\bA\in\cC$).

Classes of finite relational structures that are closed with respect to disjoint union and inverse homomorphism will just be called \emph{constraint satisfaction problems} or CSPs, for short. Note that every CSP over a countable signature is an age, too.

\subsection*{Weakly oligomorphic structures.} 

In \cite{Cam90}, Peter Cameron defined the notion of oligomorphic permutation groups. Recall that a permutation group is called \emph{oligomorphic} if it has just finitely many orbits in its action on $n$-tuples for every $n$. A structure $\bA$ is called oligomorphic if its automorphism group is oligomorphic.

Before coming to the definition of weakly oligomorphic structures, we have to recall some model theoretic notions: Let $\Sigma$ be a relational signature, and let $L(\Sigma)$ be the language of first order logics with respect to $\Sigma$. Let $\bA$ be a $\Sigma$-structure. For a formula $\varphi(\bx)$ (where $\bx=(x_1,\dots,x_n)$), we define $\varphi^\bA\subseteq A^n$ as the set of all $n$-tuples $\ba$ over $A$ such that $\bA\models\varphi(\ba)$. More generally, for a set $\Phi$ of formulae from $L(\Sigma)$ with free variables from $\{x_1,\dots,x_n\}$, we define $\Phi^\bA$ as the intersection of all relations $\varphi^\bA$ where $\varphi$ ranges through $\Phi$. We call $\Phi$ a \emph{type}, and $\Phi^\bA$ the relation defined by $\Phi$ in $\bA$.

If $\Phi^\bA\neq\emptyset$, then we say that $\bA$ realizes $\Phi$. We call $\Phi$ positive existential  if it consists just of positive existential formulae. 

For a relation $\varrho\subseteq A^n$ by $\Tp_\bA(\varrho)$ we denote the set of all formulae $\varphi(\bx)$ such that $\varrho\subseteq \varphi^\bA$. This is the type defined by $\varrho$ with respect to $\bA$. Analogously, the positive existential type $\pTp_\bA(\varrho)$ is defined. 
By $\Th(\bA)$ we will denote the full first order theory of $\bA$ while with $\pTh(\bA)$ we will denote the positive existential part of $\Th(\bA)$.

Let us come now to the definition of the structures under consideration in this paper. 
\begin{definition}[\cite{MPPhD,AUpaper}]
	A relational structure $\bA$ is called \emph{weakly oligomorphic} if for every arity there are just finitely many relations that can be defined by positive existential types.
\end{definition}
One can argue that it would be more appropriate to define a structure $\bA$ to be weakly oligomorphic if its endomorphism monoid is \emph{oligomorphic} (i.e.\ there are just finitely many invariant relations of $\End (\bA)$ of any arity). However, there is no need to worry, since, at least for countable structures, these two definitions are equivalent:
\begin{proposition}[{\cite[Thm. 6.3.4]{AUpaper}}, cf.\  {\cite[Prop. 2.2.5.1]{MPPhD}}]\label{prop:weakly:olig}
	A countable structure $\bA$ is weakly oligomorphic if and only if $\End(\bA)$ is oligomorphic.  
\end{proposition}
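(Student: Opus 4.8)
The plan is to prove the equivalence by relating the positive existential types realized in $\bA$ to the invariant relations of $\End(\bA)$. The natural bridge is the observation that positive existential formulae are precisely the first-order formulae preserved under homomorphisms, and hence under endomorphisms. So my starting point is the classical preservation fact: a relation $\varrho \subseteq A^n$ is defined by a positive existential type over $\bA$ if and only if it is invariant under $\End(\bA)$, i.e.\ every endomorphism maps $\varrho$ into itself. The forward direction of this is easy (endomorphisms preserve positive existential formulae, hence their intersections), and it gives at once that ``$\End(\bA)$ oligomorphic'' implies ``$\bA$ weakly oligomorphic'': if there are only finitely many $n$-ary $\End(\bA)$-invariant relations, then a fortiori there are only finitely many $n$-ary relations definable by positive existential types, since each such relation is invariant.

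The substantial direction is the converse, and this is where I expect the real work to lie. Assume $\bA$ is weakly oligomorphic, so for each $n$ there are finitely many relations of the form $\Phi^\bA$ with $\Phi$ a positive existential type. I want to conclude that $\End(\bA)$ has finitely many invariant $n$-ary relations. The key step is to show that every $\End(\bA)$-invariant relation is in fact definable by a positive existential type over $\bA$; once this is established, the map $\varrho \mapsto \pTp_\bA(\varrho)$ pairs invariant relations with positive existential types, and since distinct invariant relations are recovered as $(\pTp_\bA(\varrho))^\bA = \varrho$, finiteness of the latter forces finiteness of the former. The nontrivial inclusion is that an arbitrary $\End(\bA)$-invariant $\varrho$ equals $(\pTp_\bA(\varrho))^\bA$. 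One inclusion, $\varrho \subseteq (\pTp_\bA(\varrho))^\bA$, holds by definition of the type. For the reverse inclusion I would argue that if $\bb \in (\pTp_\bA(\varrho))^\bA$ then $\bb$ satisfies every positive existential formula true of all of $\varrho$, and I must produce an endomorphism of $\bA$ sending some tuple of $\varrho$ to $\bb$.

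The main obstacle, then, is precisely this type-realization/endomorphism-extension step, and it is where countability and weak oligomorphy must be used together. The plan here is a back-and-forth / compactness-style construction: enumerate $A$ as $a_0, a_1, \dots$ and build an endomorphism of $\bA$ coordinate by coordinate, at each finite stage mapping a finite subset of $\bA$ into $\bA$ while keeping all the accumulated positive existential constraints satisfiable. Weak oligomorphy enters to guarantee that the relevant positive existential types are finitely generated up to equivalence in $\bA$ (only finitely many relations of each arity exist, so a type is determined by a single defining formula on $\bA$), which makes the finite-stage satisfiability checks decidable by a single positive existential sentence and lets the construction proceed without getting stuck. In effect one shows that $\bA$ is ``positively existentially saturated'' enough that any tuple consistent with the positive existential type of $\varrho$ is the image of a point of $\varrho$ under an endomorphism. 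I would expect to lean on the cited development in \cite{AUpaper,MPPhD}, where the correspondence between weak oligomorphy, the finiteness of invariant relations, and the homomorphism-homogeneous positive existential expansion is worked out; the crux is assembling the endomorphism realizing the type, and the finiteness hypothesis is exactly what removes the obstruction at each step of the back-and-forth.
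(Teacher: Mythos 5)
The paper itself contains no proof of this proposition: it is imported from the cited sources (stated with a \qed), so there is nothing in-paper to compare your argument against, and it must be judged on its own. Its skeleton is the standard one and is mostly sound: the easy direction (pe-type-definable relations are $\End(\bA)$-invariant, so ``$\End(\bA)$ oligomorphic'' implies ``weakly oligomorphic'') is correct; the counting argument is correct once one knows every invariant relation is pe-type-definable; and you correctly identify both the crux (a tuple realizing the pe type of another tuple must be its endomorphic image) and the engine that drives it (weak oligomorphy forces every pe type to be equivalent over $\bA$ to a single pe formula, because the relations defined by its finite conjunctions are finitely many and closed under intersection, so the chain stabilizes; this is what unblocks each step of your construction, which, by the way, is a one-directional ``forth'' construction, not back-and-forth, since endomorphisms need not be invertible).

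The genuine gap is at the step ``if $\bb\in(\pTp_\bA(\varrho))^\bA$ then I must produce an endomorphism of $\bA$ sending \emph{some} tuple of $\varrho$ to $\bb$.'' Your forth construction needs an anchor to start from: a \emph{specific} tuple $\ba\in\varrho$ with $\pTp_\bA(\ba)\subseteq\pTp_\bA(\bb)$; only then can you extend $\ba\mapsto\bb$ element by element to an endomorphism and conclude $\bb\in\varrho$ by invariance. But membership in $(\pTp_\bA(\varrho))^\bA$ only says that $\bb$ realizes the intersection $\bigcap_{\ba\in\varrho}\pTp_\bA(\ba)$, and a realizer of an intersection of types need not realize any single one of them, so as written the construction cannot start. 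This step is not cosmetic: it uses weak oligomorphy once more, together with closure of pe formulas under finite disjunction. Concretely: choose representatives $\ba_1,\dots,\ba_k\in\varrho$ so that $\{(\pTp_\bA(\ba))^\bA:\ba\in\varrho\}=\{(\pTp_\bA(\ba_i))^\bA: 1\le i\le k\}$ (finitely many by weak oligomorphy), write each $(\pTp_\bA(\ba_i))^\bA=\varphi_i^\bA$ with $\varphi_i$ a single pe formula (your finite-generation fact), and observe $\varphi_1\vee\dots\vee\varphi_k\in\pTp_\bA(\varrho)$; hence $\bb\models\varphi_j$ for some $j$, i.e.\ $\pTp_\bA(\ba_j)\subseteq\pTp_\bA(\bb)$, and $\ba_j$ is your anchor. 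Alternatively, you can sidestep the issue by a cleaner decomposition: prove only the tuple-wise statement $\{f(\ba):f\in\End(\bA)\}=(\pTp_\bA(\ba))^\bA$, note that every invariant relation is the union of the sets $\{f(\ba):f\in\End(\bA)\}$ over $\ba\in\varrho$, and conclude that the number of $n$-ary invariant relations is at most $2^{N}$, where $N$ is the (finite) number of $n$-ary pe-type-definable relations.
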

Clearly, if a structure is oligomorphic, then it is also weakly oligomorphic. Moreover, weakly oligomorphic structures are closed under retracts. Hence, e.g., retracts of $\aleph_0$-categorical structures are weakly oligomorphic.

First structural results for weakly oligomorphic structures were obtained in \cite{MasPec11,AgeHHnew,AUpaper}. 

\section{A motivating result}\label{s2}
In this section we will motivate in greater detail, why we are interested in studying the hom-equivalence classes of weakly oligomorphic structures. The first reason comes from the theory of homogeneous relational structures.  
Let us recall \Fraisse's theorem:
\begin{theorem}[\Fraisse{} (\cite{Fra53})]
  A class $\cC$ of finite relational structures is the age of some
  countable homogeneous relational structure if and only if
  \begin{enumerate}[(i)]
  \item it is closed under isomorphism,
  \item it has only countably many non-isomorphic members,
  \item it has the hereditary property and the amalgamation property.
  \end{enumerate}
  Moreover, any two countable homogeneous relational structures
  with the same age are isomorphic.\qed
\end{theorem}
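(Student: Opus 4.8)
The plan is to prove the two halves separately, with the reverse direction of the characterization being the substantial one; the ``only if'' direction and the uniqueness clause are both quick back-and-forth arguments.

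For the forward implication, suppose $\cC=\Age(\bU)$ for a countable homogeneous $\bU$. Properties (i) and (ii) are immediate: the age is isomorphism-closed by definition, and a countable carrier has only countably many finite subsets, hence countably many finite substructures up to isomorphism. The hereditary property is clear, since a substructure of a structure embedding into $\bU$ again embeds into $\bU$. The only point that uses homogeneity is the amalgamation property: given embeddings $f_1\colon\bA\injto\bB_1$ and $f_2\colon\bA\injto\bB_2$ with $\bA,\bB_1,\bB_2\in\cC$, I would first realize $\bB_1$ and $\bB_2$ as substructures of $\bU$; then $f_1$ and $f_2$ induce two embeddings of $\bA$ into $\bU$, and homogeneity supplies an automorphism of $\bU$ carrying one image of $\bA$ onto the other. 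Transporting $\bB_2$ by this automorphism makes the two copies of $\bA$ coincide inside $\bU$, and the finite substructure they jointly induce is the required amalgam $\bC\in\cC$. (Taking $\bA$ empty recovers the joint embedding property.)

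For the reverse implication I would build $\bU$ as the union of an increasing chain $\bU_0\subseteq\bU_1\subseteq\cdots$ of members of $\cC$. The target is the \emph{extension property}: for all $\bA\subseteq\bB$ in $\cC$, every embedding $\bA\injto\bU$ extends to an embedding $\bB\injto\bU$. Because $\cC$ has only countably many isomorphism types and each $\bU_n$ is finite, there are only countably many \emph{extension tasks}, namely triples consisting of $\bA\subseteq\bB$ in $\cC$ together with an embedding of $\bA$ into the current $\bU_n$. I would enumerate all such tasks and treat them one at a time by a dovetailing argument: to resolve a task, apply the amalgamation property to the inclusion $\bA\injto\bB$ and the given embedding $\bA\injto\bU_n$, obtaining a structure in $\cC$ into which $\bU_n$ embeds and which accommodates the image of $\bB$; take this to be $\bU_{n+1}$. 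The union $\bU=\bigcup_n\bU_n$ is countable and, by construction, has the extension property. It then remains to check that $\Age(\bU)=\cC$ and that $\bU$ is homogeneous. Every finite substructure of $\bU$ lies in some $\bU_n\in\cC$, hence in $\cC$ by (HP); conversely every $\bA\in\cC$ embeds into $\bU$ by applying the extension property to $\emptyset\injto\bA$. Homogeneity follows from a back-and-forth argument: given an isomorphism $h$ between finite substructures of $\bU$, I would extend its domain and range alternately, one element at a time, each single-element extension being an instance of the extension property applied to the relevant one-point extension in $\cC$, so that the union of the resulting chain of partial isomorphisms is an automorphism of $\bU$ extending $h$.

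Finally, the uniqueness clause is another back-and-forth: if $\bU$ and $\bD$ are countable homogeneous structures with the same age, I would enumerate their carriers and construct an isomorphism as the union of finite partial isomorphisms, using at each step that the substructure induced by the current domain together with a new point lies in the common age and embeds into the other structure, while homogeneity lets the new image be chosen compatibly with the partial isomorphism already built. The main obstacle throughout is the bookkeeping in the chain construction---arranging the enumeration of extension tasks so that every task (including those created at later stages) is eventually served, and confirming that each application of (AP) keeps us inside $\cC$ via (HP)---rather than any single conceptual step.
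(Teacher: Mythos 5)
The paper offers no proof of this theorem: it is stated as \Fraisse's classical result, cited from \cite{Fra53}, so there is no internal argument to compare yours against. Your proposal is a correct account of the standard proof---homogeneity of $\bU$ yields (AP) by extending the isomorphism between the two copies of $\bA$ inside $\bU$ to an automorphism; the converse builds the limit as a countable increasing chain in $\cC$ realizing the extension property through dovetailed applications of (AP); and both homogeneity and uniqueness then follow by back-and-forth. The two points you leave tacit are routine but worth naming: first, (AP) returns an embedding of $\bU_n$ into the amalgam rather than an inclusion, so to keep the chain literally increasing one must replace the amalgam by an isomorphic copy in which that embedding becomes an inclusion (here isomorphism-closure of $\cC$ is used); second, your appeals to the empty structure---to recover (JEP) from (AP) and to embed each $\bA\in\cC$ into $\bU$---rely on the convention that relational structures may be empty, which is exactly the convention under which the paper's condition (iii), listing only (HP) and (AP), suffices.
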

In \cite{CamNes06}, Peter Cameron and Jaroslav Ne\v set\v ril introduced the notion of homomorphism-homogeneous structures. \emph{A local homomorphism} of a structure $\bA$ is a homomorphism from a finite substructure of $\bA$ to $\bA$. A structure $\bA$ is called \emph{homomorphism-homogeneous} if every local homomorphism of $\bA$ can be extended to an endomorphism of $\bA$.
For homomorphism-homogeneous relational structures a \Fraisse-type characterization was given in \cite{AgeHHnew}:
\begin{theorem}[{\cite[Thm.4.4,Cor.4.6]{AgeHHnew}}]
  A class $\cC$ of finite relational structures is the age of some
  countable homomorphism-homogeneous relational structure if and only if
  \begin{enumerate}[(i)]
  \item it is closed under isomorphism,
  \item it has only countably many non-isomorphic members,
  \item it has the hereditary property and the homo-amalgamation property.
  \end{enumerate}
  Moreover, any two countable homomorphism-homogeneous relational structures
  with the same age are homomorphism-equivalent.\qed
\end{theorem}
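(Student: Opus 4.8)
The plan is to run a \Fraisse-style analysis in which the homo-amalgamation property (HAP) takes over the role played by ordinary amalgamation in the classical theorem. For the necessity direction, suppose $\bU$ is a countable homomorphism-homogeneous structure and put $\cC=\Age(\bU)$. Conditions (i), (ii) and (HP) are immediate from the definition of an age, so the only point is (HAP). Given $\bA,\bB_1,\bB_2\in\cC$ with a homomorphism $f_1\colon\bA\to\bB_1$ and an embedding $f_2\colon\bA\to\bB_2$, I would fix embeddings $e_1\colon\bB_1\injto\bU$ and $e_2\colon\bB_2\injto\bU$ (possible since $\bB_1,\bB_2\in\Age(\bU)$). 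Then $e_2\circ f_2$ is an embedding of $\bA$ into $\bU$, so the assignment $(e_2\circ f_2)(a)\mapsto(e_1\circ f_1)(a)$ is a well-defined local homomorphism $\varphi$ of $\bU$ on the substructure induced on $e_2(f_2(A))$; well-definedness uses injectivity of $e_2\circ f_2$, and that $e_2\circ f_2$ reflects relations while $f_1,e_1$ preserve them. By homomorphism-homogeneity $\varphi$ extends to an endomorphism $\hat\varphi$ of $\bU$. Taking $\bC$ to be the finite substructure of $\bU$ induced on $e_1(B_1)\cup\hat\varphi(e_2(B_2))$ (hence a member of $\cC$), the embedding $g_1:=e_1\colon\bB_1\injto\bC$ and the homomorphism $g_2:=\hat\varphi\circ e_2\colon\bB_2\to\bC$ satisfy $g_1\circ f_1=g_2\circ f_2$, which is exactly (HAP).

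For sufficiency I would construct $\bU$ as the union of a chain $\bU_0\injto\bU_1\injto\cdots$ of finite members of $\cC$, using a bookkeeping enumeration to discharge two families of requirements. The first family forces $\Age(\bU)=\cC$: for each $\bB\in\cC$ and each stage, pass to some $\bU_{n+1}\supseteq\bU_n$ into which $\bB$ embeds. The second family forces homomorphism-homogeneity: for each finite $\bD\le\bU_n$, each homomorphism $h\colon\bD\to\bU_n$, and each point $p\in U_n$, extend $h$ to the substructure $\bD^+$ induced on $D\cup\{p\}$, at the cost of enlarging the ambient structure. This extension step is precisely (HAP): with apex $\bD$, the homomorphism $h$ in the role of $f_1$ (so $\bU_n$ plays $\bB_1$) and the inclusion $\bD\injto\bD^+$ in the role of $f_2$ (so $\bD^+$ plays $\bB_2$), (HAP) produces $\bC\in\cC$, an embedding $g_1\colon\bU_n\injto\bC$ and a homomorphism $g_2\colon\bD^+\to\bC$ with $g_1\circ h=g_2\circ(\mathrm{incl})$. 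Setting $\bU_{n+1}:=\bC$ and identifying $\bU_n$ with $g_1(\bU_n)$, the map $g_2$ is a homomorphism $\bD^+\to\bU_{n+1}$ extending $h$ and choosing an image for $p$. Once every scheduled triple $(\bD,h,p)$ has been treated, $\bU$ enjoys the one-point homomorphism-extension property, and a forth argument over an enumeration of $U$ (any local homomorphism $\bD\to\bU$ and its image lie in some $\bU_m$, with $\bD\le\bU_m$ induced) extends it point-by-point to an endomorphism; thus $\bU$ is homomorphism-homogeneous with $\Age(\bU)=\cC$.

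Uniqueness up to homomorphism-equivalence is a forth-only argument using homomorphism-homogeneity of the target alone. Given countable homomorphism-homogeneous $\bU,\bV$ with $\Age(\bU)=\Age(\bV)$, enumerate $U=\{u_0,u_1,\dots\}$ and build coherent homomorphisms $h_k\colon\bD_k\to\bV$, where $\bD_k$ is induced on $\{u_0,\dots,u_k\}$. At each step $\bD_{k+1}\in\Age(\bU)=\Age(\bV)$ furnishes an embedding $e\colon\bD_{k+1}\injto\bV$; then $\psi:=h_k\circ(e|_{\bD_k})^{-1}$ is a local homomorphism of $\bV$, which extends to an endomorphism $\hat\psi$ of $\bV$, and $h_{k+1}:=\hat\psi\circ e$ is a homomorphism extending $h_k$. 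The union gives $\bU\to\bV$, and by symmetry $\bV\to\bU$, so $\bU$ and $\bV$ are homomorphism-equivalent. The crux of the whole argument is the sufficiency extension step: the realization that (HAP), rather than ordinary amalgamation, is exactly what extends a local homomorphism while merely embedding the old structure into the new one. A subtlety I would flag explicitly is that (HAP) does not by itself entail (JEP) (for instance, the class of finite reflexive graphs that are either loops-only or complete satisfies (HP) and (HAP) yet fails (JEP)); consequently the universality requirements must draw on $\cC$ being a genuine age, and should be kept conceptually separate from the homomorphism-homogeneity requirements that (HAP) alone secures. (One could also obtain $\bU$ abstractly from a categorical \Fraisse{} theorem, but the direct chain construction keeps the role of (HAP) transparent.)
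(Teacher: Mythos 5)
Your argument is sound in all three parts, but note first that the paper itself contains no proof of this statement to compare against: it is imported from the cited source with a \qed, so the only question is whether your blind proof is correct. The necessity direction (conjugating $e_1\circ f_1$ by the inverse of the embedding $e_2\circ f_2$ to get a local homomorphism, extending it by homomorphism-homogeneity, and cutting down to the finite induced substructure on $e_1(B_1)\cup\hat\varphi(e_2(B_2))$) is exactly right; so is your use of (HAP) in the one-point extension step, which correctly matches the paper's asymmetric formulation ($\bB_1=\bU_n$ receives the homomorphism and gets embedded, $\bB_2=\bD^+$ receives the embedding and gets mapped homomorphically); and the forth-only uniqueness argument, using homomorphism-homogeneity of the target alone, is correct.

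The item you downplay as ``a subtlety I would flag'' deserves much stronger billing: your loops-only/complete example is not a side remark but an outright counterexample to the statement as printed. That class is isomorphism-closed, countable, and has (HP); it has (HAP) because any map into a complete reflexive graph is a homomorphism, while any homomorphism from a complete reflexive graph on at least two vertices into a loops-only graph is constant (so in the critical case $\bB_1$ loops-only, $\bB_2$ complete, one may take $\bC=\bB_1$ with $g_2$ the appropriate constant map); yet it fails (JEP), since no member of the class admits embeddings of both the two-vertex loops-only graph and the two-vertex complete reflexive graph. Hence the class satisfies (i)--(iii) but is not the age of \emph{any} countable structure, and conditions (i)--(iii) as quoted cannot characterize ages of homomorphism-homogeneous structures. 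The theorem is only true with (JEP) added to (iii) (equivalently, with $\cC$ assumed to be an age), and that is the version your sufficiency argument actually proves, since your universality requirements consume (JEP) to join $\bU_n$ with the next $\bB\in\cC$. In short: your proof is a correct proof of the intended theorem, and your example shows that the formulation quoted in this paper omits a necessary hypothesis, presumably an imprecision in transcribing the result from its source.
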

If we add to this the easy observation that all homomorphism-homogeneous structures over a finite signature are weakly oligomorphic (cf. \cite[Cor.6.7]{AgeHHnew}), then this gives the first reason to be interested in the hom-equivalence classes of weakly oligomorphic structures.  

Our second motivation comes from the theory of $\aleph_0$-categorical structures. Let us recall the classical result that links $\aleph_0$-categoricity with oligomorphy (cf. \cite[(2.10)]{Cam90}):
\begin{theorem}[Engeler, Ryll-Nardzewski, Svenonius]\label{thm:ryll-nardzewsky}
	Let $\bA$ be a countably infinite relational structure. Then $\Th(\bA)$ is $\aleph_0$-categorical if and only if $\Aut(\bA)$ is oligomorphic.\qed
\end{theorem}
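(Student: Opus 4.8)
The plan is to route the biconditional through the classical type-counting condition. Write $T=\Th(\bA)$; this is a complete first-order theory with an infinite model, over the countable language $L(\Sigma)$. For each $n$ let $S_n(T)$ denote the set of complete $n$-types consistent with $T$, topologized as a Stone space with basic clopen sets $[\varphi]=\{p:\varphi\in p\}$. The strategy is to prove the chain
\[
\Aut(\bA)\ \text{oligomorphic}\ \Longrightarrow\ |S_n(T)|<\infty\ (\forall n)\ \Longrightarrow\ T\ \aleph_0\text{-categorical},
\]
and then to close the loop with $T\ \aleph_0$-categorical $\Longrightarrow\Aut(\bA)$ oligomorphic.

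\emph{First, oligomorphy yields finitely many types.} Every $\alpha\in\Aut(\bA)$ preserves all first-order formulae, so tuples in a common orbit realize a common complete type; hence finitely many orbits on $A^{n}$ force only finitely many $n$-types, say $p_1,\dots,p_m$, to be realized in $\bA$. The point needing care is that these exhaust $S_n(T)$. If some $p\in S_n(T)$ were unrealized, then by completeness each $p_i$ is separated from $p$ by a formula $\psi_i\in p$ with $\neg\psi_i\in p_i$; putting $\psi=\bigwedge_{i}\psi_i\in p$, every $\ba\in A^n$ realizes some $p_i$ and so $\bA\models\neg\psi(\ba)$, giving $\neg\exists\bx\,\psi\in T$ and contradicting the consistency of $p$ with $T$. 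Thus $S_n(T)=\{p_1,\dots,p_m\}$ is finite.

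\emph{Next, finite type spaces give $\aleph_0$-categoricity.} A finite Stone space is discrete, so each complete type is isolated by a single formula; isolated types are realized in every model of $T$. I would then run a back-and-forth between arbitrary countable $\bB,\bC\models T$, building an increasing chain of finite partial elementary maps $\ba\mapsto\bb$ (matching types) and, to adjoin a new element $a'$, using that the isolated type $\operatorname{tp}(\ba\,a')$ is realized over the image $\bb$ in $\bC$; alternating sides along enumerations of both carriers produces an isomorphism. For the reverse direction, $T\ \aleph_0$-categorical $\Rightarrow\Aut(\bA)$ oligomorphic, I argue in two steps. If some $S_n(T)$ were infinite, compactness would yield a non-isolated type $p$; the omitting types theorem then gives a countable model of $T$ omitting $p$, while downward L\"owenheim--Skolem gives a countable model realizing $p$, two non-isomorphic countable models, contradicting categoricity, so again every $S_n(T)$ is finite. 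Moreover the unique countable model $\bA$ is $\aleph_0$-saturated, hence $\aleph_0$-homogeneous, so now same-type tuples are automorphism-conjugate; orbits on $A^n$ therefore correspond bijectively to the finitely many realized types, and $\Aut(\bA)$ is oligomorphic.

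The main obstacle is not the type-counting bookkeeping but the two ``soft'' model-theoretic inputs that the opposite directions rely on: the omitting types theorem (to manufacture two non-isomorphic countable models from a single non-isolated type) and the saturation/homogeneity of the $\aleph_0$-categorical model (to upgrade the type partition of $A^n$ to the orbit partition). Once these are in hand, the back-and-forth and the separating-formula argument are routine, and the whole equivalence assembles from the displayed chain together with the final implication.
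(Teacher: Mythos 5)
Your proof is correct, but there is nothing in the paper to compare it against: the paper does not prove this statement at all---it quotes it as the classical Engeler--Ryll-Nardzewski--Svenonius theorem with a pointer to \cite{Cam90}, which is why the statement carries a \verb|\qed| and no proof environment. What you have written is the standard type-counting proof: oligomorphy gives finitely many realized $n$-types, your separating-formula argument correctly shows these exhaust $S_n(T)$, finiteness of each Stone space makes every type isolated, and the back-and-forth yields $\aleph_0$-categoricity; conversely an infinite $S_n(T)$ contains a non-isolated type, which omitting types plus downward L\"owenheim--Skolem converts into two non-isomorphic countable models, and saturation of the unique countable model identifies orbits with the finitely many realized types. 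All of these steps are sound, granting the two standard inputs you name (the omitting types theorem and homogeneity of countable $\aleph_0$-saturated models).

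Two remarks. First, your proof of the direction from $\aleph_0$-categoricity to oligomorphy genuinely needs the signature to be countable (omitting types fails otherwise); you assume this explicitly, whereas the paper's preliminaries allow arbitrary relational signatures. The classical theorem is stated under the countability hypothesis, so your proof matches the intended statement, but the mismatch with the paper's stated conventions is worth noticing. Second, your last step can be streamlined so that the ``soft'' saturation input disappears: once every $S_n(T)$ is finite, types over finite parameter sets are also finite in number, hence isolated, and the very back-and-forth you used between two countable models can be run inside $\bA$ itself, starting from two tuples with the same type; it then extends that partial elementary map to an automorphism of $\bA$. This gives orbits $=$ realized types directly, and as a byproduct shows that the equivalence of oligomorphy with ``all $S_n(T)$ finite'' holds over signatures of arbitrary cardinality---only the equivalence with $\aleph_0$-categoricity requires countability.
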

A consequence of this characterization is that two countable oligomorphic structures have the same first order theory if and only if they are isomorphic. For countable weakly oligomorphic relational structures we will show the following related result:
\begin{theorem}\label{woRN}
	Let $\bB$ be a weakly oligomorphic structure, and let $\bA$ be a countable relational structure. Then the following are equivalent:
	\begin{enumerate}
	\item $\bA\to\bB$,
	\item $\pTh(\bA)\subseteq\pTh(\bB)$,
	\item $\Age(\bA)\to\Age(\bB)$,
	\item $\CSP(\bA)\subseteq\CSP(\bB)$.
	\end{enumerate}
\end{theorem}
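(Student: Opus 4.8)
The plan is to prove the four statements equivalent by the cycle $(1)\Rightarrow(4)\Rightarrow(3)\Rightarrow(2)\Rightarrow(1)$, supplemented by the direct arrow $(1)\Rightarrow(3)$ only to streamline bookkeeping. Three of these implications are soft and hold for an arbitrary codomain; the single arrow $(2)\Rightarrow(1)$ is where weak oligomorphy of $\bB$ is indispensable, and I expect it to be the main obstacle.

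For the easy arrows I would argue as follows. For $(1)\Rightarrow(4)$: if $f\colon\bA\to\bB$ is a homomorphism and $\bC\to\bA$, then composition gives $\bC\to\bB$, so $\CSP(\bA)\subseteq\CSP(\bB)$; restricting the same composition to a finite substructure, whose image is a finite substructure of $\bB$, yields $(1)\Rightarrow(3)$. For $(4)\Rightarrow(3)$: since $\Age(\bA)\subseteq\CSP(\bA)\subseteq\CSP(\bB)$ and every member of $\CSP(\bB)$ maps into $\bB$ with finite image lying in $\Age(\bB)$, we obtain $\Age(\bA)\to\Age(\bB)$. For $(3)\Rightarrow(2)$: a positive existential sentence $\varphi=\exists\bar y\,\psi(\bar y)\in\pTh(\bA)$ is witnessed by a tuple $\bar a$ generating a finite substructure $\bA_0\in\Age(\bA)$ with $\bA_0\models\varphi$; applying $(3)$ gives a homomorphism $\bA_0\to\bB$, and since positive existential sentences are preserved under homomorphisms, $\bB\models\varphi$. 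Each of these uses only the elementary preservation behaviour of positive existential formulas.

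For the crux $(2)\Rightarrow(1)$ I would first replace $\bB$ by a more convenient structure. By the result quoted in the introduction \cite{AgeHHnew}, the weakly oligomorphic $\bB$ is homomorphism-equivalent to a finite or $\aleph_0$-categorical structure $\bB^\ast$; since homomorphism-equivalent structures have equal positive existential theories, $\pTh(\bA)\subseteq\pTh(\bB)=\pTh(\bB^\ast)$, and it suffices to produce a homomorphism $\bA\to\bB^\ast$ and then compose with $\bB^\ast\to\bB$. The point of passing to $\bB^\ast$ is that a finite or $\aleph_0$-categorical structure is $\omega$-saturated. Enumerating $A=\{a_0,a_1,\dots\}$, I would build elements $b_0,b_1,\dots\in\bB^\ast$ so that at each stage $\pTp_\bA(a_0,\dots,a_n)\subseteq\pTp_{\bB^\ast}(b_0,\dots,b_n)$, whence $a_i\mapsto b_i$ preserves every atomic relation and is the desired homomorphism. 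The inductive step is the heart of the matter: to choose $b_n$ realizing $\{\varphi(b_0,\dots,b_{n-1},x_n):\varphi\in\pTp_\bA(a_0,\dots,a_n)\}$ in $\bB^\ast$, by $\omega$-saturation it is enough to see that each finite subconjunction is satisfiable over the parameters, and this holds because the projected formula $\exists x_n\bigwedge_j\varphi_j$ is again positive existential, lies in $\pTp_\bA(a_0,\dots,a_{n-1})$, and hence, by the induction hypothesis, in $\pTp_{\bB^\ast}(b_0,\dots,b_{n-1})$. The base case is exactly the transfer of a positive existential sentence and is where hypothesis $(2)$ is consumed.

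The main obstacle is this last construction, and specifically the fact that a single homomorphism $\bA\to\bB^\ast$ must be assembled from the infinitely many, a priori incompatible, finite approximations supplied by $(3)$ and $(4)$; for a general codomain no such assembly exists, and it is precisely weak oligomorphy---funnelled through homomorphism-equivalence to an $\omega$-saturated model---that furnishes the compactness needed to realize the growing positive existential types inside $\bB^\ast$ rather than in some proper elementary extension. I would finally record the trivial remark that when $\bB^\ast$ is finite the same saturation argument applies verbatim, finite structures being $\omega$-saturated, so that no separate treatment of the finite case is needed.
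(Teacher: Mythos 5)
Your four soft implications are all correct, and up to rearranging the cycle they coincide with the paper's (the paper runs $1\Rightarrow4\Rightarrow2\Rightarrow3\Rightarrow1$, you run $1\Rightarrow4\Rightarrow3\Rightarrow2\Rightarrow1$). The genuine difference is in the hard arrow. The paper uses weak oligomorphy twice and keeps the argument internal: Lemma~\ref{lem_wo_real} is a hand-made compactness substitute (a positive existential type all of whose finite subsets are realized in a weakly oligomorphic structure is itself realized, since otherwise one builds an infinite strictly descending chain of positive-existentially definable relations); this gives $2\Rightarrow3$ via Lemma~\ref{lem:wo_age}, and $3\Rightarrow1$ is then the cited Lemma~\ref{prop:wo-hom}. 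You instead funnel all of the weak oligomorphy through the heavier cited result \cite[Thm.7.2]{AgeHHnew} --- hom-equivalence of $\bB$ with a finite or $\aleph_0$-categorical $\bB^\ast$ --- and then build $\bA\to\bB^\ast$ by a classical $\omega$-saturation argument, collapsing $2\Rightarrow3\Rightarrow1$ into a single arrow. That construction is sound: the key point, that finite subtypes existentially project to positive existential formulas already lying in the type of the shorter tuple, is exactly right, and your induction is in effect a re-proof of Lemma~\ref{prop:wo-hom} for the special case of a saturated codomain. The paper's route buys self-containedness (only the finiteness of positive-existentially definable relations is used); yours buys conceptual clarity, isolating ``realize growing positive existential types in an $\omega$-saturated model'' as the real content.

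There is, however, a mismatch in generality that you should not gloss over. Theorem~\ref{woRN} does not assume $\bB$ to be countable, and the paper's proof never needs this: Lemmas~\ref{lem_wo_real} and~\ref{lem:wo_age} and the cited Lemma~\ref{prop:wo-hom} all hold for arbitrary weakly oligomorphic $\bB$. Your proof needs countability twice: \cite[Thm.7.2]{AgeHHnew} is stated only for \emph{countable} weakly oligomorphic structures, and the $\omega$-saturation you invoke for the finite or $\aleph_0$-categorical $\bB^\ast$ is likewise a countable-model, countable-language phenomenon. So as written you have proved the theorem only for countable $\bB$ (admittedly the only case used later in the paper). For countable signatures this is repairable by a L\"owenheim--Skolem step: pass to a countable $\bB'\preceq\bB$, note $\pTh(\bB')=\pTh(\bB)$, that $\bB'$ is weakly oligomorphic because $\Phi^{\bB'}=\Phi^{\bB}\cap(B')^n$ for every positive existential type $\Phi$, and that $\bB'\injto\bB$. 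But the paper explicitly allows relational signatures of arbitrary cardinality, where no such reduction is available and the paper's direct argument appears to be genuinely needed.
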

Before coming to the proof of Theorem~\ref{woRN}, we need to recall a result from \cite{AgeHHnew}, and to prove some additional auxiliary results:
\begin{lemma}[{\cite[Prop.5.2]{AgeHHnew}}]\label{prop:wo-hom}
	Let $\bA$, $\bB$ be a relational structures over the same signature such that $\Age(\bA)\to\Age(\bB)$, and suppose that $\bA$ is countable, and that $\bB$ is weakly oligomorphic. Then $\bA\to\bB$.\qed
\end{lemma}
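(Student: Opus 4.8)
The plan is to build a homomorphism $\bA\to\bB$ by a stepwise extension argument, using the weak oligomorphy of $\bB$ to keep the construction from getting stuck. Since $\bA$ is countable, I enumerate $A=\{a_1,a_2,\dots\}$ and write $\bA_n$ for the finite substructure of $\bA$ induced on $\{a_1,\dots,a_n\}$. Each $\bA_n$ lies in $\Age(\bA)$, so by hypothesis there is a homomorphism from $\bA_n$ into some member of $\Age(\bB)$; composing with the embedding of that member into $\bB$ yields $\bA_n\to\bB$ for every $n$. I call a finite partial map $g\colon\{a_1,\dots,a_n\}\to B$ \emph{good} if for every $m\ge n$ it extends to a homomorphism $\bA_m\to\bB$; taking $m=n$ shows a good map is itself a homomorphism $\bA_n\to\bB$. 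If I can produce good maps $g_0\subseteq g_1\subseteq\cdots$ with $g_n$ defined on $\{a_1,\dots,a_n\}$, then their union preserves every relation (each atomic fact of $\bA$ lives in some $\bA_n$, on which $g_n$ is a homomorphism) and is the desired homomorphism. The empty map is good because $\bA_m\to\bB$ for all $m$, so everything reduces to a single \textbf{extension step}: every good $g$ on $\{a_1,\dots,a_n\}$ extends to a good map on $\{a_1,\dots,a_{n+1}\}$.

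For the extension step, write $\bb=(g(a_1),\dots,g(a_n))$ and, for each $m\ge n+1$, set
\[
V_m=\{\,b\in B : a_i\mapsto g(a_i)\ (i\le n),\ a_{n+1}\mapsto b\ \text{extends to a homomorphism } \bA_m\to\bB\,\}.
\]
Goodness of $g$ makes each $V_m$ nonempty, and restricting homomorphisms gives $V_{n+1}\supseteq V_{n+2}\supseteq\cdots$; any $b_{n+1}\in\bigcap_m V_m$ makes $g\cup\{a_{n+1}\mapsto b_{n+1}\}$ good, so it suffices to show $\bigcap_m V_m\ne\emptyset$. This is exactly where weak oligomorphy enters. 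It forces, for each arity $k$, only finitely many positive existential $k$-types to be realized in $\bB$: two tuples have the same such type precisely when they belong to the same members of the finite family of relations definable by positive existential types, so the type is a membership pattern in a finite family and hence there are finitely many. Moreover any relation defined by a positive existential type is a union of such type-classes. Granting for the moment that each $V_m$ is a union of classes of the finite partition of $B$ into $\pTp_\bB(\bb,\,\cdot\,)$-classes, a descending chain of such unions can take only finitely many values and therefore stabilizes, so its total intersection is nonempty.

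It remains to verify that membership of $b$ in $V_m$ depends only on $\pTp_\bB(\bb,b)$. When the signature is finite this is transparent: $b\in V_m$ iff $\bB\models\chi_m(\bb,b)$, where $\chi_m(x_1,\dots,x_{n+1})$ is the canonical positive existential formula (conjunctive query) of $\bA_m$ with the images of $a_{n+2},\dots,a_m$ existentially quantified, and the truth of a single such formula depends only on the positive existential type of $(\bb,b)$. The \textbf{main obstacle} is the general case of an infinite signature, where $\bA_m$ may satisfy infinitely many atomic facts, so $\chi_m$ becomes an infinitary conjunction and $V_m$ is no longer visibly cut out by one positive existential formula. I expect to clear this obstacle by first establishing a \emph{positive existential saturation} property of $\bB$: any finitely satisfiable set of positive existential formulas in finitely many variables over finitely many parameters from $B$ is already realized in $\bB$. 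This follows from weak oligomorphy by the same counting, reducing to one variable at a time: each single positive existential formula cuts out a union of classes of the finite partition of $B$ into positive existential types over the given parameters, so a finitely satisfiable such set is a family of sets with the finite intersection property drawn from only finitely many distinct sets, whence its intersection equals a finite subintersection and is nonempty. With this property in hand, if $\pTp_\bB(\bb,b)=\pTp_\bB(\bb,b')$ and $b\in V_m$, witnessed by images $\bd=(d_{n+2},\dots,d_m)$, then for every finite subconjunction $\Delta_0$ of the atoms of $\bA_m$ the formula $\exists\,\bd\,\bigwedge\Delta_0$ is a positive existential formula true of $(\bb,b)$, hence of $(\bb,b')$; thus the requirements on the witnesses over the parameters $(\bb,b')$ form a finitely satisfiable positive existential type, which is realized, giving $b'\in V_m$. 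This is precisely the class-invariance of $V_m$, and assembling the extension steps then yields the homomorphism $\bA\to\bB$.
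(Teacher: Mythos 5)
Your proof is correct, and each of the places where such an argument usually goes wrong is handled: the restriction maps make the sets $V_m$ a decreasing chain, the counting argument (finitely many positive existential types of each arity are realized in a weakly oligomorphic structure, since a realized type is determined by which of the finitely many pe-type-definable relations contain a realizing tuple) is sound, and you correctly isolate and resolve the infinite-signature obstacle, which genuinely arises here because the paper allows arbitrary relational signatures, so the canonical conjunctive query of a finite $\bA_m$ need not be a single formula. As for comparison: the paper does not prove Lemma~\ref{prop:wo-hom} at all---it imports it from \cite{AgeHHnew} with a \emph{qed}---so the only in-paper point of contact is Lemma~\ref{lem_wo_real}, the parameter-free compactness statement that finitely realized positive existential types are realized. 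Your ``positive existential saturation'' lemma is exactly the with-parameters strengthening of Lemma~\ref{lem_wo_real} (which does not follow formally from the parameter-free version, since a realization produced by that lemma need not fix the parameter coordinates---your one-variable-at-a-time induction is the right fix, though stated tersely: one should note that the single-variable sets cut out by the formulas $\exists \bar z\,\bigwedge\Delta_0$ have the finite intersection property because the family is closed under strengthening by unions of the $\Delta_0$'s). The paper proves its version by deriving a strictly decreasing infinite chain of pe-type-definable relations, contradicting weak oligomorphy; your finite-partition-and-stabilization argument is the same finiteness engine in a different guise. What your route buys is a self-contained, forth-only extension construction of the homomorphism in which the same stabilization trick serves twice (once for saturation, once to show $\bigcap_m V_m\neq\emptyset$), at the modest cost of redoing machinery the paper prefers to cite.
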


\begin{lemma}\label{lem_wo_real} 
	Let $\bA$ be a weakly oligomorphic structure over the signature $R$, and let $\Psi$ be a positive existential type in the language of $R$-structures. If every finite subset of $\Psi$ is realized in $\bA$, then $\Psi$ is realized in $\bA$.
\end{lemma}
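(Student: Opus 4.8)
The plan is to use weak oligomorphy to collapse the infinitary intersection that defines $\Psi^\bA$ into a finite one, so that the realizability of finite subtypes transfers directly to $\Psi$ itself. This is the crucial point: an ordinary compactness argument would produce a realization of $\Psi$ only in some elementary extension of $\bA$, whereas here we must stay inside $\bA$, and weak oligomorphy is exactly what makes that possible.

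First I would fix the arity. Since $\Psi$ is a type, its free variables lie among $x_1,\dots,x_n$ for some fixed $n$, so for every subset $\Phi\subseteq\Psi$ the relation $\Phi^\bA$ is a subset of $A^n$, and by definition $\Psi^\bA=\bigcap_{\Phi}\Phi^\bA$, where $\Phi$ ranges over the finite subsets of $\Psi$. The hypothesis says precisely that $\Phi^\bA\neq\emptyset$ for every such finite $\Phi$.

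Next comes the key observation. For a finite $\Phi\subseteq\Psi$ the conjunction $\bigwedge_{\varphi\in\Phi}\varphi$ is again a positive existential formula, and it defines exactly the relation $\Phi^\bA$. Hence each $\Phi^\bA$ is one of the finitely many $n$-ary relations definable by positive existential formulae in $\bA$; this is where weak oligomorphy enters. Moreover the family $\{\,\Phi^\bA : \Phi\subseteq\Psi\text{ finite}\,\}$ is closed under intersection, because $(\Phi_1\cup\Phi_2)^\bA=\Phi_1^\bA\cap\Phi_2^\bA$ and $\Phi_1\cup\Phi_2$ is again a finite subset of $\Psi$.

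Finally I would combine these two facts. A family of sets that is closed under intersection and takes only finitely many distinct values has a least element, attained by some finite $\Phi_0\subseteq\Psi$; that is, $\Phi_0^\bA\subseteq\Phi^\bA$ for every finite $\Phi\subseteq\Psi$. It follows that $\Phi_0^\bA=\bigcap_{\Phi}\Phi^\bA=\Psi^\bA$, and since $\Phi_0^\bA\neq\emptyset$ by hypothesis, we conclude $\Psi^\bA\neq\emptyset$, i.e.\ $\Psi$ is realized in $\bA$. There are no delicate computations in this argument; the only genuinely essential step---and the reason the statement holds at all---is the passage from infinitely many conjuncts to finitely many distinct defining relations, which is exactly the content of weak oligomorphy.
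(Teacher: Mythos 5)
Your proof is correct, and it exploits exactly the same fact as the paper --- that each finite $\Phi\subseteq\Psi$ is itself a positive existential type, so weak oligomorphy leaves only finitely many possible values for the relations $\Phi^\bA$ --- but you package it differently. The paper argues by contradiction: assuming $\Psi$ is not realized, it constructs sequences $(\varphi_i)$ in $\Psi$ and tuples $(\bd_i)$ with $\bd_i$ realizing $\{\varphi_0,\dots,\varphi_i\}$ but not $\varphi_{i+1}$, so that the finite subtypes $\Psi_i=\{\varphi_1,\dots,\varphi_i\}$ define an infinite strictly decreasing chain of distinct nonempty relations, contradicting weak oligomorphy. You instead argue directly: the family $\{\Phi^\bA : \Phi\subseteq\Psi\ \text{finite}\}$ is intersection-closed (via $(\Phi_1\cup\Phi_2)^\bA=\Phi_1^\bA\cap\Phi_2^\bA$) and finite, hence has a least element $\Phi_0^\bA$, which must equal $\Psi^\bA$ and is nonempty by hypothesis. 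The two arguments are contrapositives of one another in spirit (infinite strict descent versus stabilization), so neither is deeper; but your version buys a slightly stronger conclusion for free, namely that $\Psi$ is equivalent over $\bA$ to one of its finite subtypes ($\Psi^\bA=\Phi_0^\bA$), whereas the paper's chain construction only yields realizability. The paper's version, on the other hand, needs no auxiliary observation about intersection-closed finite families, just the bare pigeonhole on distinct definable relations.
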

\begin{proof} 
	Suppose that every finite subset of $\Psi$ is realized in $\bA$, but $\Psi$ is not. Following we will define a  sequence  $(\varphi_i)$ of formulae from $\Psi$, and a sequence $(\bd_i)$ such that $\bA\models\varphi_j(\bd_i)$ for $1\le j \le i$, but $\bA\nvDash \varphi_{i+1}(\bd_i)$.

	Let $\varphi_0\in\Psi$. Then there exists a $\bd_0\in A^m$ such that $\bA\models\varphi_0(\bd_0)$. Suppose that $\varphi_i$, and $\bd_i$ are defined already. By assumption, $\bd_i$ does not  realize $\Psi$. Let $\varphi_{i+1}\in\Psi$ such that  $\bA\nvDash\varphi_{i+1}(\bd_i)$. Again, by assumption, the set $\{\varphi_0,\dots,\varphi_{i+1}\}$ is realized in $\bA$. Let $\bd_{i+1}\in A^m$ a tuple that realizes $\{\varphi_0,\dots,\varphi_{i+1}\}$.

	By construction, the sets $\Psi_i=\{\varphi_1,\dots,\varphi_i\}$ define an infinite decreasing chain of  distinct non-empty relations in $\bA$. However, this gives a contradiction with the assumption that $\bA$ is weakly oligomorphic.

	We conclude that $\Psi$ is realizable.
\end{proof}

\begin{lemma}\label{lem:wo_age} 
	Let $\bA$, and $\bB$ be relational structures over the same  signature.  If $\pTh(\bA)\subseteq \pTh(\bB)$, and if $\bB$ is weakly oligomorphic, then $\Age(\bA)\to\Age(\bB)$.
\end{lemma}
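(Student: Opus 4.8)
The plan is to verify the defining property of $\Age(\bA)\to\Age(\bB)$ directly. I fix an arbitrary member $\bA_0\in\Age(\bA)$, say with carrier $\{a_1,\dots,a_n\}$ (the $a_i$ listed without repetition), and aim to produce a finite substructure of $\bB$ together with a homomorphism from $\bA_0$ into it. The natural target is the image of a homomorphism $h\colon\bA_0\to\bB$: once such an $h$ exists, its image $h(\bA_0)$ is a finite substructure of $\bB$, hence lies in $\Age(\bB)$, and $h$ itself witnesses $\bA_0\to h(\bA_0)$. So the whole task reduces to constructing a single homomorphism $h\colon\bA_0\to\bB$.

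To construct $h$ I would encode the atomic diagram of $\bA_0$ as a positive existential type. Let $\Psi$ be the set of all atomic formulas $R(x_{i_1},\dots,x_{i_k})$ (with $R$ a relation symbol of the signature) for which $\bA_0\models R(a_{i_1},\dots,a_{i_k})$; since these formulas are quantifier- and negation-free, $\Psi$ is a positive existential type in the variables $x_1,\dots,x_n$. Realizing $\Psi$ in $\bB$ is exactly what is needed: if $(b_1,\dots,b_n)\in\Psi^\bB$, then putting $h(a_i)=b_i$ gives a well-defined map (the $a_i$ are distinct), and whenever $\bA_0\models R(a_{i_1},\dots,a_{i_k})$ the formula $R(x_{i_1},\dots,x_{i_k})$ lies in $\Psi$, forcing $\bB\models R(b_{i_1},\dots,b_{i_k})$; thus $h$ preserves every basic relation and is a homomorphism.

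It then remains to show that $\Psi$ is realized in $\bB$, and this is where the hypotheses enter. For every finite $\Psi'\subseteq\Psi$ the sentence $\exists x_1\cdots\exists x_n\,\bigwedge\Psi'$ is positive existential, and it holds in $\bA$: applying an embedding of $\bA_0$ into $\bA$ to the tuple $(a_1,\dots,a_n)$ yields a witnessing tuple, since embeddings preserve atomic (indeed all positive existential) formulas. Because $\pTh(\bA)\subseteq\pTh(\bB)$, this sentence also holds in $\bB$, which says precisely that the finite subset $\Psi'$ is realized in $\bB$. Hence every finite subset of $\Psi$ is realized in $\bB$.

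The main obstacle, and the only point where weak oligomorphy is genuinely used, is the passage from ``every finite subset of $\Psi$ is realized in $\bB$'' to ``$\Psi$ itself is realized in $\bB$''. Over a finite signature $\Psi$ would be finite and there would be nothing to prove, but for an arbitrary relational signature $\Psi$ may be infinite, and this compactness-style implication can fail for general $\bB$. This implication is exactly the content of Lemma~\ref{lem_wo_real}, which I would invoke for the weakly oligomorphic structure $\bB$: it delivers a tuple realizing the full type $\Psi$ in $\bB$, completing the construction of $h$ and hence establishing $\Age(\bA)\to\Age(\bB)$.
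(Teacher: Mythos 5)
Your proof is correct and takes essentially the same approach as the paper: both arguments reduce the problem to realizing a positive existential type of a tuple enumerating the finite structure, verify that every finite subset is realized in $\bB$ using $\pTh(\bA)\subseteq\pTh(\bB)$, and invoke Lemma~\ref{lem_wo_real} (where weak oligomorphy enters) for the compactness step, reading off the homomorphism coordinatewise from the realizing tuple. The only cosmetic difference is that you use the atomic diagram of $\bA_0$ where the paper uses the full positive existential type $\pTp_\bA(\bc)$; either suffices.
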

\begin{proof}
	Let $\bC$ be a finite substructure of $\bA$, and let  $C=\{c_1,\dots,c_n\}$ be its carrier. Define  $\bc:=(c_1,\dots,c_n)$. Then every finite subset of  $\pTp_\bA(\bc)$ is realized in $\bB$. Since $\bB$ is weakly oligomorphic, by Lemma~\ref{lem_wo_real}, we get that there is a tuple $\bd\in B^n$ that realizes $\pTp_\bA(\bc)$. Let $D=\{d_1,\dots,d_n\}$, and let $\bD$ be the substructure of $\bB$ induced by $D$. Then the mapping $f:\bC\to\bD$ given by $c_i\mapsto d_i$ is a homomorphism. This shows that $\Age(\bA)\to\Age(\bB)$.
\end{proof}

\begin{proof}[Proof of Theorem~\ref{woRN}]
	($1\Rightarrow 4$) Clear.
	
	($4\Rightarrow 2$) It is well known (and easy to see) that for every positive primitive proposition $\varphi$ there exists a finite relational structure $\bA_\varphi$ such that for any relational structure $\bC$ of the given type, we have $\bC\models\varphi$ if and only if $\bA_\varphi\to\bC$. Thus from $\CSP(\bA)\subseteq\CSP(\bB)$ it follows that the positive primitive theory of $\bA$ is contained in the positive primitive theory of $\bB$. However, this is the case if and only if $\pTh(\bA)\subseteq\pTh(\bB)$.
	
	($2\Rightarrow 3$) This is a direct consequence of Lemma~\ref{lem:wo_age}.
	
	($3\Rightarrow 1$) This follows from Lemma~\ref{prop:wo-hom}.
\end{proof}
Let us recall now a result from \cite{MasPec11}:
\begin{proposition}[{\cite[Th.3.5]{MasPec11}}]
	Let $\bA$ be a countable weakly oligomorphic structure, and let $\bB$ be a countable model of $\Th(\bA)$. Then $\bB$ is weakly oligomorphic, too.\qed
\end{proposition}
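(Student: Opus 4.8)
The plan is to reduce weak oligomorphy to a counting statement about single positive existential \emph{formulas}, and then to observe that this count is invariant under elementary equivalence.

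First I would record the following reduction. Fix an arity $n$ and let $\mathcal{R}_n(\bA)$ be the family of $n$-ary relations on $\bA$ definable by a single positive existential formula $\varphi(\bx)$, where $\bx=(x_1,\dots,x_n)$. Since positive existential formulas are closed under conjunction and disjunction, $\mathcal{R}_n(\bA)$ is closed under finite intersections and finite unions. Now the relation defined by a positive existential \emph{type} $\Phi$ is the intersection $\bigcap_{\varphi\in\Phi}\varphi^\bA$ of members of $\mathcal{R}_n(\bA)$. If $\mathcal{R}_n(\bA)$ is finite, then any such intersection involves only finitely many distinct relations from the intersection-closed family $\mathcal{R}_n(\bA)$, and hence again lies in $\mathcal{R}_n(\bA)$; conversely every member of $\mathcal{R}_n(\bA)$ is defined by a singleton type. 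Thus the family of $n$-ary relations definable by positive existential types coincides with $\mathcal{R}_n(\bA)$ whenever the latter is finite, and is infinite otherwise. Consequently $\bA$ is weakly oligomorphic if and only if $\mathcal{R}_n(\bA)$ is finite for every $n$.

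The key step is then to see that $|\mathcal{R}_n(\bA)|$ depends only on $\Th(\bA)$. Two positive existential formulas $\varphi(\bx)$ and $\psi(\bx)$ satisfy $\varphi^\bA=\psi^\bA$ precisely when the first order sentence $\forall\bx\,(\varphi\leftrightarrow\psi)$ holds in $\bA$, i.e.\ lies in $\Th(\bA)$. Hence the equivalence relation ``defines the same $n$-ary relation'' on the set of positive existential formulas in the variables $x_1,\dots,x_n$ is completely determined by $\Th(\bA)$, and its classes are in bijection with $\mathcal{R}_n(\bA)$ via the map sending the class of $\varphi$ to $\varphi^\bA$. Since $\bB$ is a model of the complete theory $\Th(\bA)$, we have $\Th(\bB)=\Th(\bA)$, so this partition---and in particular its number of classes---is identical for $\bB$ and $\bA$; that is, $|\mathcal{R}_n(\bB)|=|\mathcal{R}_n(\bA)|$ for every $n$.

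Combining the two steps finishes the argument: as $\bA$ is weakly oligomorphic, $\mathcal{R}_n(\bA)$ is finite for every $n$, so $\mathcal{R}_n(\bB)$ is finite for every $n$, and by the reduction $\bB$ is weakly oligomorphic. I expect the only delicate point to be the first paragraph---the verification that passing from single formulas to arbitrary types produces no new definable relations once $\mathcal{R}_n(\bA)$ is finite; the elementary-equivalence step is then essentially immediate. I also note that countability of $\bA$ and $\bB$ is not actually needed in this approach, though it is harmless to retain it in the statement.
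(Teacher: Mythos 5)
Your proof is correct and self-contained. Note first that this paper contains no proof of the proposition to compare against: it is quoted, proof omitted, from \cite{MasPec11} (Theorem~3.5 there), so your argument can only be judged on its own merits and against that external source. On the merits it holds up. The reduction from types to single formulas is sound: positive existential formulas are closed under conjunction, so $\mathcal{R}_n(\bA)$ is intersection-closed, and once it is finite, the relation $\Phi^\bA$ defined by any positive existential type is an intersection over the finite set $\{\varphi^\bA \mid \varphi\in\Phi\}\subseteq\mathcal{R}_n(\bA)$ and hence again lies in $\mathcal{R}_n(\bA)$, while singleton types give the reverse inclusion in all cases; thus weak oligomorphy in the paper's sense is equivalent to finiteness of every $\mathcal{R}_n$. (The only degenerate case, the empty type defining $A^n$, changes neither count's finiteness.) The second step is immediate, as you say: the partition of positive existential formulas in $x_1,\dots,x_n$ by ``defines the same relation'' is recorded by the sentences $\forall\bx\,(\varphi\leftrightarrow\psi)$, hence is an invariant of the complete theory, and $\Th(\bB)=\Th(\bA)$ since $\Th(\bA)$ is complete. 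Your closing observation is also accurate and worth stressing: with the definition of weak oligomorphy adopted in this paper (finitely many relations definable by positive existential types), countability of $\bA$ and $\bB$ plays no role; it matters only for the equivalence of this definition with oligomorphy of $\End(\bA)$ (Proposition~\ref{prop:weakly:olig}), which is the form in which weak oligomorphy is exploited elsewhere in the paper and which presumably explains the countability hypotheses in the statement as cited.
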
 
We can combine this with Theorem~\ref{woRN} to obtain:
\begin{corollary}
	Let $T$ be the complete first order theory of a weakly oligomorphic structure. Then all countable models of $T$ are homomorphism-equivalent.\qed
\end{corollary}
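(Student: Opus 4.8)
The plan is to reduce the claim to two symmetric applications of Theorem~\ref{woRN}. Fix two arbitrary countable models $\bA$ and $\bB$ of $T$; the goal is to exhibit homomorphisms $\bA\to\bB$ and $\bB\to\bA$, which is exactly what homomorphism-equivalence asks for. Since $\bA$ and $\bB$ both satisfy the \emph{complete} theory $T$, they are elementarily equivalent, so $\Th(\bA)=\Th(\bB)$ and hence their positive existential parts agree, $\pTh(\bA)=\pTh(\bB)$. In particular both inclusions $\pTh(\bA)\subseteq\pTh(\bB)$ and $\pTh(\bB)\subseteq\pTh(\bA)$ hold trivially.

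To feed these into Theorem~\ref{woRN} I first need each of $\bA$ and $\bB$ to be weakly oligomorphic, and this is precisely where the proposition of \cite{MasPec11} quoted just above is used: starting from a countable weakly oligomorphic structure with theory $T$, it declares every countable model of $T$ to be weakly oligomorphic. Granting this, I would apply Theorem~\ref{woRN} twice. Taking the weakly oligomorphic structure of the theorem to be $\bB$ and the countable structure to be $\bA$, the implication $(2)\Rightarrow(1)$ converts $\pTh(\bA)\subseteq\pTh(\bB)$ into $\bA\to\bB$; interchanging the roles, with $\bA$ now playing the weakly oligomorphic part and $\bB$ the countable one, $\pTh(\bB)\subseteq\pTh(\bA)$ yields $\bB\to\bA$. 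Together these make $\bA$ and $\bB$ homomorphism-equivalent, and since $\bA,\bB$ were arbitrary the corollary follows.

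The one point that deserves care — and really the only content beyond the two quoted results — is securing a \emph{countable} weakly oligomorphic model of $T$ to serve as the base of the proposition of \cite{MasPec11}. If the weakly oligomorphic structure defining $T$ is already countable there is nothing to do; otherwise I would pass to a countable elementary substructure by the downward L\"owenheim--Skolem theorem. This retains the theory $T$, and it also retains weak oligomorphy: whether two positive existential formulae $\varphi(\bx)$ and $\psi(\bx)$ define the same relation is captured by the first order sentence $\forall\bx\,(\varphi\leftrightarrow\psi)$, so the partition of the positive existential formulae of each arity according to the relation they define depends only on $T$; since any relation defined by a positive existential type is an intersection of relations defined by single positive existential formulae, the finiteness required in the definition of weak oligomorphy is therefore an invariant of $T$ alone. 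With such a countable base in hand, the two applications of Theorem~\ref{woRN} above go through verbatim.
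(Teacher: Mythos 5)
Your proof is correct and is exactly the argument the paper intends: the paper offers no proof beyond the remark that the corollary follows by combining the proposition quoted from \cite{MasPec11} with Theorem~\ref{woRN}, and that combination is precisely your two symmetric applications of the implication $(2)\Rightarrow(1)$, with the proposition supplying weak oligomorphy of the countable models.

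Your last paragraph is care beyond what the paper takes, since the paper tacitly treats the weakly oligomorphic structure generating $T$ as countable. Two remarks on it. First, the downward L\"owenheim--Skolem step only produces a \emph{countable} elementary substructure when the signature is countable, whereas the paper allows arbitrary relational signatures, so that step is not available in general. Second, and more importantly, the detour is unnecessary: your own invariance observation---that the partition of positive existential formulae of each arity by the relations they define is expressible by sentences of the form $\forall\bx\,(\varphi\leftrightarrow\psi)$, hence is determined by the complete theory $T$, and that relations defined by positive existential types are intersections of the finitely many formula-defined ones---applies directly to the countable models $\bA$ and $\bB$ themselves, since they are elementarily equivalent to the original (possibly uncountable) weakly oligomorphic structure. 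This shows outright that every model of $T$ is weakly oligomorphic, which both closes the gap and subsumes the role played by the proposition of \cite{MasPec11} in this argument.
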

A first order theory, for which all countable models are homomorphism-equivalent can rightfully be called \emph{weakly $\aleph_0$-categorical}. Hence we just proved that the first order theory of each  weakly oligomorphic structure is weakly $\aleph_0$-categorical. Models of weakly $\aleph_0$-categorical theory are another motivation to study hom-equivalence classes of weakly oligomorphic structures.

\section{Hom-equivalence classes of weakly oligomorphic structures}\label{s3}

We are going to give structure to  hom-equivalence classes by equipping them with a quasi-order. There are several natural and interesting ways to do so---e.g. we may say that $\bA$ is below $\bB$ if $\bA$ is a retract of $\bB$, or if $\bA$ is a homomorphic image of $\bB$. In this paper we choose the embedding quasi-order (with the hope to come back in the future to the retraction-quasi-order). We define the relation of embeddability on relational structure of a given type. In particular, we write $\bA\injto \bB$ if there exists an embedding from $\bA$ into $\bB$. This relation, clearly defines a quasi-order on the relational structures of a given type. We will restrict this quasi-order to hom-equivalence classes of weakly oligomorphic structures. Abusing the terminology, we will further restrict our attention solely to the finite and countably infinite structures. So for a relational structure, the hom-equivalence class $\HE(\bA)$ is defined to be the class of all countable relational structures $\bB$ for which $\bA\leftrightarrow\bB$.

We are far away from completely understanding the structure embeddability-quasiorder on  hom-equivalence classes of weakly oligomorphic structures. Initially, we are interested in extremal elements---i.e. such elements that embed into all other elements and such elements into which all other elements can be embedded. This promises to be most rewardable as we can expect some exceptional properties of such structures. Of course, being a quasi-order, it can happen that there are several smallest and greatest elements with respect to embeddability, and that not all such elements are of the same beauty. So we aim also to find among all smallest and greatest elements  the most natural, distinguished candidates. 

\subsection{Smallest elements}
For the existence of smallest elements in hom-equivalence classes of weakly oligomorphic structures we only need to collect a couple of results from the literature. Recall that a structure $\bA$ is called a \emph{core} if all its endomorphisms are embeddings. Clearly, if $\bA$ is a smallest element in $\HE(\bT)$, then $\bA$ must be a core. And moreover, every core in $\HE(\bT)$ is a smallest element. Indeed, if $\bB\in\HE(\bT)$ and and if $f:\bA\to\bB$, $g:\bB\to\bA$, then $g\circ f$ is an embedding. Hence $f$ is an embedding, too.

The most simple case is when $\bT$ is finite. In this case up to isomorphism $\bB$ has a unique retract that is a core. This is up to isomorphism the unique smallest element in $\HE(\bT)$.

Let us now treat the case that $\bT$ is homomorphism-homogeneous and weakly oligomorphic. Then we have:
\begin{proposition}[{\cite[Cor.6.10]{AgeHHnew}}]\label{wo-olig}
	Every countable, weakly oligomorphic, homomorphism-homogeneous structure $\bT$ contains, up to isomorphism, a unique homomorphism-equivalent homomorphism-homogeneous core $\bF$. Moreover, $\bF$ is oligomorphic and homogeneous.\qed
\end{proposition}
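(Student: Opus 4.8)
The plan is to construct the core $\bF$ as a distinguished substructure of $\bT$ realizing exactly the positive existential types that $\bT$ realizes, and then to verify the homogeneity, core, and oligomorphy properties directly. First I would recall that since $\bT$ is countable and weakly oligomorphic, for each arity $n$ there are only finitely many relations on $\bT$ definable by positive existential types. Enumerate, for each $n$, representatives $\ba^{(1)},\ba^{(2)},\dots$ of the finitely many distinct positive existential types $\pTp_\bT(\ba)$ realized by $n$-tuples $\ba$ over $T$; since there are finitely many such types in each arity and countably many arities, the union $F_0$ of the coordinates of all these representative tuples is countable. I would then close $F_0$ under applications of the endomorphisms of $\bT$ (equivalently, take the substructure generated by the orbits of the chosen tuples under $\End(\bT)$) to obtain a countable substructure $\bF \leq \bT$ with the property that every positive existential type realized in $\bT$ is already realized in $\bF$.

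Next I would establish the three properties in turn. \emph{Homomorphism-equivalence:} the inclusion $\bF \injto \bT$ is a homomorphism, and since $\bF$ realizes every positive existential type of $\bT$ one has $\pTh(\bT) \subseteq \pTh(\bF)$ while the reverse inclusion is automatic from $\bF \leq \bT$; by Lemma~\ref{lem:wo_age} together with Lemma~\ref{prop:wo-hom} (both applicable because $\bF$ inherits weak oligomorphy from $\bT$, being a retract-to-be and realizing only the finitely-many-per-arity types of $\bT$) one gets $\bT \to \bF$, so $\bF \leftrightarrow \bT$. \emph{Core:} I would show every endomorphism of $\bF$ is an embedding. Given $e \in \End(\bF)$ and a tuple $\ba$, the tuple $e(\ba)$ satisfies every positive existential formula that $\ba$ does, so $\pTp_\bF(\ba) \subseteq \pTp_\bF(e(\ba))$; because $\bF$ realizes only finitely many positive existential types in each arity and $e$ cannot increase the type strictly on an infinite descending chain, a weak-oligomorphy argument (as in Lemma~\ref{lem_wo_real}) forces $\pTp_\bF(\ba) = \pTp_\bF(e(\ba))$, and equality of positive existential types combined with the requirement that $\bF$ be homomorphism-homogeneous will yield that $e$ reflects all relations, hence is an embedding.

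For \emph{homomorphism-homogeneity} of $\bF$, I would lift local homomorphisms of $\bF$ to endomorphisms by first viewing them as local homomorphisms of $\bT$ (via the inclusion), extending to an endomorphism of $\bT$ by the assumed homomorphism-homogeneity of $\bT$, and then composing with a retraction $r : \bT \to \bF$ (obtained from $\bT \to \bF$ and $\bF \injto \bT$ being mutually inverse up to hom-equivalence) to land back inside $\bF$; one checks this composite restricts correctly on the finite domain. \emph{Oligomorphy and homogeneity:} since $\bF$ is a homomorphism-homogeneous core, every local homomorphism is in fact a local embedding extending to an endomorphism that is an embedding, so $\bF$ is homogeneous; and the finitely-many-positive-existential-types condition, for a core where endomorphisms are embeddings and hence type-preserving in both directions, upgrades to finitely many $\Aut(\bF)$-orbits on $n$-tuples, giving oligomorphy via Theorem~\ref{thm:ryll-nardzewsky}. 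Finally, \emph{uniqueness} up to isomorphism follows because any two homomorphism-equivalent cores are related by mutually inverse embeddings, and \Fraisse's theorem (two homogeneous structures with the same age are isomorphic) pins down $\bF$ once homogeneity is known. The main obstacle I expect is the core step: carefully turning the ``type cannot strictly increase forever'' intuition into a rigorous argument that $e$ reflects relations, and ensuring that the interplay between homomorphism-homogeneity and the finite-type condition genuinely forces endomorphisms to be embeddings rather than merely type-preserving maps.
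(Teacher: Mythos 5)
Note first that the paper itself contains no proof of this proposition: it is imported verbatim from \cite[Cor.6.10]{AgeHHnew} and stated with a qed symbol, so your proposal can only be measured against the statement itself --- and it has genuine gaps. The central one is the core step. Your construction goes in the wrong direction: closing a set of representative tuples under $\End(\bT)$ can only \emph{enlarge} the substructure, whereas a core is a minimal object. Concretely, let $\bT$ be the countably infinite edgeless graph (weakly oligomorphic and homomorphism-homogeneous): every self-map is an endomorphism, so the closure of any nonempty $F_0$ under $\End(\bT)$ is all of $T$, and your $\bF=\bT$ is not a core --- the core is a single vertex. The argument you hope will rescue this (``$e$ cannot increase the type strictly'') also fails. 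From $\pTp_\bF(\ba)\subseteq\pTp_\bF(e(\ba))$, iteration gives an \emph{increasing} chain $\pTp_\bF(\ba)\subseteq\pTp_\bF(e(\ba))\subseteq\pTp_\bF(e^2(\ba))\subseteq\cdots$, and weak oligomorphy forces this chain to stabilize; but stabilization is not a contradiction and does not give equality at the first step. Lemma~\ref{lem_wo_real} needs an infinite \emph{strictly decreasing} chain of distinct nonempty definable relations, and an endomorphism that strictly increases a type once and then stabilizes (e.g.\ collapsing two vertices $a\neq b$ of the edgeless graph onto $c$, where $\pTp(a,b)\subsetneq\pTp(c,c)$) is perfectly consistent with weak oligomorphy. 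So you cannot conclude that endomorphisms of $\bF$ are embeddings, and indeed in the example they are not.

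There are two further gaps. First, weak oligomorphy of $\bF$ is asserted via ``being a retract-to-be,'' which is circular: weak oligomorphy does pass to retracts (as the paper notes), but you have not yet produced the retraction $\bT\to\bF$, and it does \emph{not} pass to arbitrary substructures --- every countable graph, weakly oligomorphic or not, embeds into the random graph. Without it, Lemmas~\ref{lem:wo_age} and~\ref{prop:wo-hom} cannot be applied to get $\bT\to\bF$. Second, for homogeneity you argue that a local isomorphism of $\bF$ extends to an endomorphism which, $\bF$ being a core, is an embedding; but homogeneity demands an \emph{automorphism}, and a self-embedding of an infinite structure need not be surjective. Closing that gap requires a separate back-and-forth argument exploiting weak oligomorphy, which is missing. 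Your uniqueness argument (hom-equivalent cores embed into each other, hence have the same age, hence are isomorphic by \Fraisse{} once homogeneity is known) is the one part that would go through --- but only after the construction, the core property, and homogeneity have been repaired.
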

So in this case we have the existence of a smallest element as well as the existence of a distinguished smallest element (namely, it is the unique one that is homomorphism-homogeneous).

The next case is that $\bT$ is a countably infinite structure with an oligomorphic automorphism group. By the Ryll-Nardzewski theorem, $\bB$ is $\aleph_0$-categorical. Now we can use a result by Bodirsky:
\begin{theorem}[{\cite[Thm.16]{Bod07}}]\label{bod}
	Every $\aleph_0$-categorical relational structure $\bT$  is homomorphism-equivalent to a model-complete core $\bC$, which is unique up to isomorphism, and $\aleph_0$-categorical or finite. For all $k\ge 1$, the orbits of $k$-tuples in $\bC$ are primitive positive definable.\qed
\end{theorem}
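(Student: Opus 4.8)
The plan is to realize the core as the image of a single endomorphism of $\bT$ that simultaneously pushes every tuple to a pp-type of maximal strength, and then to read off all the claimed properties from that maximality. Since $\bT$ is $\aleph_0$-categorical it is oligomorphic, hence weakly oligomorphic, so for each arity there are only finitely many relations definable by positive existential types; equivalently, only finitely many pp-types of $n$-tuples occur. The monotonicity I would exploit is that homomorphisms preserve positive existential formulae: if $f\in\End(\bT)$ then $\pTp_\bT(\ba)\subseteq\pTp_\bT(f(\ba))$ for every tuple $\ba$. Because there are only finitely many pp-types of each arity, this enlargement can occur only finitely often, so for every finite tuple some endomorphism drives its pp-type to a maximal one; the task is to do this for all tuples at once.

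To build such an endomorphism I would use Lemma~\ref{lem_wo_real} as a compactness device. Fix an enumeration $T=\{t_1,t_2,\dots\}$ and set $\bc=(t_1,t_2,\dots)$. Consider the positive existential type $\Psi(x_1,x_2,\dots)$ consisting of $\pTp_\bT(\bc)$ together with, for each $n$, formulae witnessing a maximal achievable pp-type on $(x_1,\dots,x_n)$. Including $\pTp_\bT(\bc)$ forces any realizing tuple $\bd=(d_1,d_2,\dots)$ to induce a homomorphism $e\colon t_i\mapsto d_i$, since all atomic, hence positive existential, facts about $\bc$ transfer to $\bd$, while the remaining formulae force $e$ to be pp-type maximal. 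Every finite subset of $\Psi$ mentions only finitely many coordinates and formulae, so by greedily composing endomorphisms that raise pp-types on the relevant finite tuple---a process that terminates by finiteness of the pp-type count---such a finite subset is realized in $\bT$. Lemma~\ref{lem_wo_real} then yields a realization of all of $\Psi$, i.e.\ an endomorphism $e$ with $\pTp_\bT(e(\ba))$ maximal for every $\ba$. Taking $\bC\injto\bT$ to be the substructure induced on $e(T)$, a back-and-forth argument shows $e$ restricts to an automorphism of $\bC$, whence $\bC$ is a retract of $\bT$ and in particular $\bT\leftrightarrow\bC$.

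With $\bC$ in hand I would verify the remaining assertions from maximality. Any endomorphism $g$ of $\bC$ satisfies $\pTp_\bC(\ba)\subseteq\pTp_\bC(g(\ba))$, and maximality forces equality; since every atomic formula is positive existential, $g$ reflects all relations and is therefore an embedding, so $\bC$ is a core. Next, if $\pTp_\bC(\ba)=\pTp_\bC(\bb)$ then realizing $\pTp_\bC(\ba)$ at $\bb$ via Lemma~\ref{lem_wo_real} and a back-and-forth argument (using $\aleph_0$-categoricity through Theorem~\ref{thm:ryll-nardzewsky}) produces an automorphism of $\bC$ sending $\ba$ to $\bb$; hence the orbits of $\Aut(\bC)$ are exactly the pp-type classes. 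As there are only finitely many such classes per arity, $\Aut(\bC)$ is oligomorphic and $\bC$ is $\aleph_0$-categorical (or finite) by Theorem~\ref{thm:ryll-nardzewsky}, and each orbit, being a maximal pp-type among finitely many competitors, is cut out by a single formula that may be taken primitive positive---so the orbits of $k$-tuples are primitive positive definable. Model-completeness follows because every endomorphism now preserves each orbit and is thus a pointwise limit of automorphisms.

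For uniqueness I would argue that any two hom-equivalent model-complete cores $\bC,\bC'$ are isomorphic. Homomorphisms $f\colon\bC\to\bC'$ and $h\colon\bC'\to\bC$ compose to an endomorphism $h\circ f$ of $\bC$, which is an embedding by the core property and, by model-completeness together with $\aleph_0$-categoricity, an elementary self-embedding; since an elementary self-embedding of an $\aleph_0$-categorical structure permutes the finitely many orbits and is therefore surjective, $h\circ f\in\Aut(\bC)$ and symmetrically $f\circ h\in\Aut(\bC')$, forcing $f$ to be a bijective embedding, i.e.\ an isomorphism. The main obstacle I anticipate is precisely the construction of $e$: one must check that the locally maximal pp-types chosen on overlapping finite tuples are mutually consistent, so that $\Psi$ is genuinely finitely satisfiable, and that preservation of pp-type really upgrades to injectivity and elementarity of endomorphisms of $\bC$. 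This consistency-and-elementarity step, where the finiteness of pp-types and the full force of model-completeness must be combined, is the delicate heart of the argument.
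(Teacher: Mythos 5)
First, a point of reference: the paper does \emph{not} prove this statement---it is imported verbatim from Bodirsky \cite{Bod07}---so your attempt can only be measured against the literature. Your skeleton (finitely many positive existential types per arity, a single type-maximizing endomorphism $e$ built by a compactness argument, $\bC$ the structure induced on $e(T)$) is indeed the skeleton of the known proof, but several steps have genuine gaps. (i) Lemma~\ref{lem_wo_real} is stated and proved only for types in \emph{finitely many} variables (its proof produces a descending chain of $m$-ary relations and invokes weak oligomorphy for that fixed arity); you apply it to a type $\Psi(x_1,x_2,\dots)$ in infinitely many variables, which it does not cover. For $\aleph_0$-categorical $\bT$ this can be repaired via $\omega$-saturation and a coordinate-by-coordinate realization, but that is a different tool. (ii) The coherence problem you flag is real and is not resolved by your greedy composition: maximal positive existential types are not unique, so the maximal type written into $\Psi$ for the $n$-th initial segment need not be the one the greedy process realizes, nor compatible with the one chosen at stage $n+1$. (A repair: compose endomorphisms $h_1,h_2,\dots$ successively, note that once an initial segment's type becomes maximal it can never change again, and \emph{define} $q_n$ to be that stable type; coherence is then automatic.) (iii) Your core argument conflates $\pTp_\bC$ with $\pTp_\bT$: maximality is established for types computed in $\bT$, whereas an endomorphism $g$ of $\bC$ only weakly increases types computed in $\bC$, and the inclusion $\pTp_\bC(\bar c)\subseteq\pTp_\bT(\bar c)$ points the wrong way; since $e$ moves $C$ around, the needed equality $\pTp_\bC(\bar c)=\pTp_\bT(\bar c)$ is not automatic. (iv) ``Same pp-type implies same $\Aut(\bC)$-orbit by back-and-forth'' is circular: a back-and-forth needs equality of \emph{complete first-order} types (Theorem~\ref{thm:ryll-nardzewsky}), and that pp-types determine first-order types in $\bC$ is essentially the pp-definability of orbits you are trying to establish.

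One step is outright false: your uniqueness argument rests on the claim that an elementary self-embedding of an $\aleph_0$-categorical structure is surjective because it ``permutes the finitely many orbits''. Orbits are infinite, so preserving each orbit setwise says nothing about surjectivity; concretely, $(\mathbb{Q},<)$ is $\aleph_0$-categorical, model-complete and a core, yet composing an isomorphism $\mathbb{Q}\cong\mathbb{Q}\cap(0,1)$ with the inclusion $\mathbb{Q}\cap(0,1)\injto\mathbb{Q}$ gives a non-surjective elementary self-embedding. Hence $h\circ f$ need not lie in $\Aut(\bC)$, and your conclusion that $f$ is an isomorphism does not follow. Uniqueness requires a different argument: by model-completeness $h\circ f$ and $f\circ h$ are elementary self-embeddings, so the alternating chain $\bC\to\bC'\to\bC\to\cdots$ has a countable direct limit that is an elementary extension of both $\bC$ and $\bC'$, and $\aleph_0$-categoricity identifies this limit with each of them. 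So the architecture of your proposal matches Bodirsky's, but the core property, the pp-definability of orbits, and the uniqueness each still need real work, and the last needs a genuinely different argument rather than more detail.
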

So in this case we have the existence of a distinguished smallest element in $\HE(\bT)$---the unique model-complete core.

Finally, let us consider the most general case, that $\bT$ is countably infinite and weakly oligomorphic. In this case we can combine Theorem~\ref{bod} by Bodirsky with the following result:
\begin{proposition}[{\cite[Thm.7.2]{AgeHHnew}}]
	Let $\bT$ be a countable weakly oligomorphic relational structure. Then $\bT$, is homomorphism-equivalent to a finite or $\aleph_0$-categorical structure $\bC$. Moreover, $\bC$ embeds into $\bA$.\qed
\end{proposition}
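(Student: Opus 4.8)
The plan is to reduce the statement to Proposition~\ref{wo-olig} by passing to a positive existential expansion. (I read the final clause as asserting that $\bC$ may be taken to be a \emph{substructure} of $\bT$, i.e.\ $\bC\injto\bT$; the symbol $\bA$ there is a misprint for $\bT$.) Since the inclusion of any substructure is an embedding, once $\bC\injto\bT$ is arranged the relation $\bC\to\bT$ holds for free, so the whole task is to produce a substructure $\bC$ of $\bT$ which is finite or $\aleph_0$-categorical and which satisfies $\bT\to\bC$.

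First I would build the expansion. Let $R$ be the signature of $\bT$ and let $\bT^+$ be the expansion of $\bT$ to a signature $R^+\supseteq R$ obtained by adjoining, in each arity, a fresh relation symbol for each of the (by weak oligomorphy, finitely many) relations definable by a positive existential type, interpreted as that relation $\Phi^\bT$. Then $R^+$ is again a relational signature and $\bT^+$ is countable. The key observation is that $\End(\bT^+)=\End(\bT)$: homomorphisms preserve positive existential formulae, so every endomorphism of $\bT$ preserves each relation $\Phi^\bT$ and is an endomorphism of $\bT^+$, while the reverse inclusion is immediate from $R\subseteq R^+$. Hence $\End(\bT^+)$ is oligomorphic, and Proposition~\ref{prop:weakly:olig} shows that $\bT^+$ is again weakly oligomorphic. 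Finally, $\bT^+$ is homomorphism-homogeneous: this is precisely the fact, recalled in the introduction, that every weakly oligomorphic structure has a positive existential expansion which is homomorphism-homogeneous.

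Now $\bT^+$ satisfies all hypotheses of Proposition~\ref{wo-olig}, so it contains, up to isomorphism, a substructure $\bF^+$ that is homomorphism-equivalent to $\bT^+$, homomorphism-homogeneous, oligomorphic and homogeneous. Let $\bC$ be the reduct of $\bF^+$ to the original signature $R$. Then $\bC$ is a substructure of $\bT$ (the $R$-reduct of $\bT^+$), giving $\bC\injto\bT$. Any homomorphism witnessing $\bT^+\leftrightarrow\bF^+$ preserves in particular the $R$-relations, hence also witnesses $\bT\leftrightarrow\bC$, so $\bC$ is homomorphism-equivalent to $\bT$. Moreover every automorphism of $\bF^+$ is an automorphism of its reduct, so $\Aut(\bF^+)\le\Aut(\bC)$ and the orbits of $\Aut(\bC)$ on $n$-tuples are unions of the finitely many orbits of $\Aut(\bF^+)$; thus $\Aut(\bC)$ is oligomorphic, and by Theorem~\ref{thm:ryll-nardzewsky} the structure $\bC$ is $\aleph_0$-categorical or finite, as required.

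The routine points (finiteness of the number of positive existential relations per arity, preservation of positive existential formulae under homomorphisms, and stability of oligomorphy under reducts) are all straightforward. The genuine content is carried by the two imported facts: that the positive existential expansion $\bT^+$ is homomorphism-homogeneous, and Proposition~\ref{wo-olig}, which supplies the oligomorphic homogeneous core of a weakly oligomorphic homomorphism-homogeneous structure. The point that most requires care is that passing back to the reduct does not destroy $\aleph_0$-categoricity; this is exactly why it matters that forgetting the extra relations can only enlarge the automorphism group and hence only coarsen the orbit partition.
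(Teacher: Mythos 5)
There is no internal proof to compare against: the paper states this proposition with a closing box because it is imported verbatim from \cite[Thm.7.2]{AgeHHnew}, and your reading of the last clause (the $\bA$ there is a misprint for $\bT$) is the intended one. What you have written is a correct reconstruction from ingredients the paper itself quotes: Proposition~\ref{prop:weakly:olig} (weak oligomorphy $=$ oligomorphy of $\End$), the existence of a homomorphism-homogeneous positive existential expansion mentioned in the introduction, Proposition~\ref{wo-olig} (which is itself only cited, as \cite[Cor.6.10]{AgeHHnew}), and Theorem~\ref{thm:ryll-nardzewsky}. This is essentially the route by which the result is obtained in the cited source, so your proposal should be regarded as matching the (external) proof rather than diverging from it.

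One step deserves tightening. You construct the specific expansion $\bT^+$ by \emph{all} relations definable by positive existential types and then assert its homomorphism-homogeneity by appeal to the introduction's statement, which only says that \emph{some} positive existential expansion is homomorphism-homogeneous; strictly these are different claims. The gap is harmless and closes in either of two ways: (i) run your argument with the expansion $\bT'$ supplied by the cited fact instead of $\bT^+$ --- everything you actually use (that $\End(\bT')=\End(\bT)$, hence $\bT'$ is weakly oligomorphic by Proposition~\ref{prop:weakly:olig}, and that the $R$-reduct of $\bT'$ is $\bT$) holds for \emph{any} positive existential expansion, since endomorphisms preserve positive existentially definable relations; or (ii) keep $\bT^+$ and note that every local homomorphism of $\bT^+$ is in particular a local homomorphism of $\bT'$, hence extends to some $g\in\End(\bT')=\End(\bT)=\End(\bT^+)$, so homomorphism-homogeneity transfers from $\bT'$ to $\bT^+$. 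With that repair the argument is complete; the remaining steps --- the kernel observation $\End(\bT^+)=\End(\bT)$, hom-equivalence surviving the passage to reducts, the embedding $\bC\injto\bT$ coming from the reduct of $\bF^+\injto\bT^+$, and oligomorphy surviving reducts because the automorphism group can only grow --- are all exactly right.
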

That is, $\bT$ is homomorphism-equivalent to a model complete core, which is unique up to isomorphism.

\subsection{Greatest elements}\label{s32}
A largest element in the hom-equivalence class of a relational structure is also known under the name of a universal object for this class. It is a long standing open problem, to characterize such classes of countable structures, that have a universal structure. Much work has been done on classes of structures with forbidden substructures. Striking results in this direction are, e.g., \cite{CheSheChi99,HubNes09} (see in particular \cite{HubNes09} for further references).

Let $\bT$ be a relational structure. Then the greatest possible age of a structure on $\HE(\bT)$ is $\CSP(\bT)$. 
If $\bT$ is weakly oligomorphic, and if $\bA$ is any countable structure that is universal for $\overline{\CSP(\bT)}$ then, by Lemma~\ref{prop:wo-hom}, it follows that $\bA$ is a greatest element of $\HE(\bT)$. For finite relational signatures, the existence of such universal structures follows already from the following result by Hubi\v{c}ka and Ne\v{s}et\v{r}il. In this theorem, $\Forb_h(\cF)$ denotes the class of all countable $R$-structures into which no element from $\cF$ maps homomorphically. Moreover, a structure is called connected if its Gaifman-graph is connected. 
\begin{theorem}[{\cite[Thm.2.2]{HubNes09b}}]\label{Hub}
	Let $\cF$ be a countable set of finite connected relational structures over a finite signature $R$. Then $\Forb_h(\cF)$ contains  a universal structure $\bU_\cF$.\qed
\end{theorem}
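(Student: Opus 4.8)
The plan is to realize $\Forb_h(\cF)$ as the class of countable models of an age, and then to produce the universal object by the method of lifts and shadows, since a direct \Fraisse{} construction is not available.

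First I would reduce to a statement about finite structures. Put $\cC:=\Forb_h(\cF)\cap\{\text{finite }R\text{-structures}\}$. This class is hereditary, since a homomorphism from some $\bF\in\cF$ into a substructure composes with the inclusion, so substructures of members stay in $\cC$; and it has only countably many isomorphism types because $R$ is finite. Crucially, since every $\bF\in\cF$ is connected, any homomorphism from $\bF$ into a disjoint union has its image inside a single component; hence $\cC$ is closed under disjoint unions, which yields the (JEP), and it is closed under inverse homomorphisms, so that $\cC$ is even a CSP. Finally, a homomorphism from a finite $\bF$ has finite image, so $\bF\to\bA$ already holds on a finite substructure; consequently $\Forb_h(\cF)=\overline{\cC}$. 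The task thus becomes: produce $\bU_\cF\in\overline{\cC}$ with $\Age(\bU_\cF)=\cC$ into which every member of $\overline{\cC}$ embeds.

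One cannot simply take a \Fraisse{} limit, because $\cC$ need not have the amalgamation property: a connected $\bF$ may map into a free amalgam $\bB_1\oplus_\bA\bB_2$ by bridging through the shared part $\bA$, even when it maps into neither factor, and since any amalgam of $\bB_1,\bB_2$ over $\bA$ receives a homomorphism from their free amalgam, a single bad free amalgam already rules out amalgamation altogether. So I would instead look for a relational expansion $R^{+}\supseteq R$ and a class $\cC^{+}$ of $R^{+}$-structures --- the \emph{lifts} --- such that (a) the $R$-reducts (\emph{shadows}) of the members of $\cC^{+}$ are exactly the members of $\cC$, and (b) $\cC^{+}$ is a free amalgamation class. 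Granting such a lift, let $\bU^{+}$ be the \Fraisse{} limit of $\cC^{+}$ and let $\bU_\cF$ be its $R$-reduct. Then $\Age(\bU_\cF)\subseteq\cC$ by~(a), so $\bU_\cF\in\overline{\cC}=\Forb_h(\cF)$; and universality transfers along reducts, for any countable $\bB\in\Forb_h(\cF)$ admits a lift $\bB^{+}\in\overline{\cC^{+}}$, which embeds into the homogeneous (hence universal) limit $\bU^{+}$, and taking reducts embeds $\bB$ into $\bU_\cF$.

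The heart of the argument --- and the step I expect to be the main obstacle --- is the construction of a suitable lift: one must choose auxiliary relations that record, rigidly enough, a witnessing decomposition of each member of $\cC$, so that free amalgamation in the expanded language can never create a forbidden homomorphism in the shadow. Both hypotheses enter here. Connectedness of each $\bF\in\cF$ forces any hypothetical homomorphism into an amalgam to propagate along the Gaifman graph and cross the amalgamation cut, so it suffices that the auxiliary relations block this crossing locally; and finiteness of the signature $R$ and of each forbidden structure keeps the relevant obstructions finite and local, so that the auxiliary data stays manageable and $\cC^{+}$ is a genuine amalgamation class with a \Fraisse{} limit. A second delicate point is property~(a) for \emph{infinite} shadows: every countable member of $\overline{\cC}$ must lift to a member of $\overline{\cC^{+}}$, not merely on its finite pieces, and this is arranged by defining the lift through an explicit, locally determined rule that applies verbatim to countable structures. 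Once a lift with these properties is secured, assembling $\bU_\cF$ and verifying its membership and universality is routine.
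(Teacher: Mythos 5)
First, a point of reference: the paper itself gives no proof of this statement --- it is quoted without proof from Hubi\v{c}ka and Ne\v{s}et\v{r}il \cite{HubNes09b}, so the only proof to compare against is the one in that reference, which indeed proceeds by the lift-and-shadow method you outline. Your reduction steps are correct and carefully argued: $\cC:=\Forb_h(\cF)\cap\{\text{finite } R\text{-structures}\}$ is hereditary, closed under disjoint unions (by connectedness of the members of $\cF$) and under inverse homomorphisms; $\Forb_h(\cF)=\overline{\cC}$ because a homomorphism from a finite $\bF$ factors through a finite substructure; and your observation that one bad free amalgam kills the amalgamation property outright (since the free amalgam maps homomorphically into every amalgam over the same base) is exactly right. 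The transfer of universality and of membership in $\overline{\cC}$ from a \Fraisse{} limit $\bU^{+}$ of a lifted class to its reduct is also sound, \emph{provided} a suitable lifted class exists.

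The genuine gap is that the lifted class $\cC^{+}$ is never constructed, and that construction is not a detail to be deferred: it is the entire content of the theorem. Concretely, you would need to (i) define the auxiliary relations explicitly --- in the cited proof these encode the \emph{pieces} of the forbidden structures, i.e., the ways a connected $\bF\in\cF$ can be cut along a tuple, with a relation recording that such a piece maps in with its boundary on that tuple; (ii) prove that free amalgamation of lifted structures cannot create a homomorphism from any $\bF$ in the shadow, which is where connectedness actually does work: a hypothetical homomorphism into the amalgam decomposes into pieces hanging on the amalgamation base, and the witness relations must be subject to closure conditions, built into the definition of $\cC^{+}$, that turn this decomposition into a contradiction; (iii) show that every \emph{countable} member of $\overline{\cC}$ admits a lift in $\overline{\cC^{+}}$, which requires exhibiting a canonical, locally determined assignment of the piece-relations valid for infinite structures; and (iv) verify that $\cC^{+}$ has only countably many isomorphism types --- this is not automatic, because for countably infinite $\cF$ the expanded signature $R^{+}$ is in general infinite, and the class of \emph{all} finite structures over an infinite signature has continuum many isomorphism types, so a sparseness restriction of the kind this paper imposes in Section~\ref{s5} must be built in. Your sketch names (i)--(iii) as ``the main obstacle'' and then assumes them; as it stands, the proposal is a correct reduction plus a statement of the strategy of the cited proof, not a proof.
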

Indeed,  for a structure $\bT$ over a finite relational signature $R$, it is easy to see that $\overline{\CSP(\bT)}=\Forb_h(\cF)$ where $\cF$ is equal to the class of all finite $R$-structures not in $\CSP(\bT)$. Moreover, we have that whenever a structure in $\cF$ is disconnected, then at least one of its connected components is in $\cF$, too, for otherwise $\Forb_h(\cF)$ would not be closed with respect to direct sums---a contradiction. It follows that if $\cF_c$ denotes the connected structures in $\cF$, then $\Forb_h(\cF)=\Forb_h(\cF_c)$. 
On the other hand, every class of the shape $\Forb_h(\cF)$ for a class of finite connected structures $\cF$, is  of the shape $\overline{\CSP(\bT)}$ for some countable structure $\bT$. Finally, since $R$ is finite, any class of finite $R$ structures contains up to isomorphism just countably many elements. 

Unfortunately, in general we do not know much about the symmetries of the universal structures given by Theorem~\ref{Hub}. However, in the special case that $\cF$ is finite we know of the existence of an $\aleph_0$-categorical universal structure in $\Forb_h(\cF)$ due to a result by  Hubi\v{c}ka and Ne\v{s}et\v{r}il \cite[Thm.1.3]{HubNes09} which in turn generalizes the analogous result about graphs due to Cherlin, Shelah, and Chi (cf. \cite[Thm.4]{CheSheChi99}).

Though, we know now that the hom-equivalence class of any weakly oligomorphic structure has a largest element, in general we do not know of the existence of a ``distinguished'' largest element. Here, by distinguished we, ideally, understand $\aleph_0$-categorical because of the following old result by Saracino:
\begin{theorem}[{\cite[Thms.1,2]{Sar73}}]\label{saracino}
	Let $T$ be an $\aleph_0$-categorical theory with no finite models. Then $T$ has a model-companion $T'$. Moreover, $T'$ is $\aleph_0$-categorical, too.
\end{theorem}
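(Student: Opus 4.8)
The plan is to produce $T'$ as the theory of the \emph{existentially closed} models of $T$, and to use $\aleph_0$-categoricity twice: once to force this class to be first-order axiomatizable, and once to force the resulting theory to be $\aleph_0$-categorical. Recall that $M$ is existentially closed (in the class of models of $T$) if every existential formula with parameters from $M$ that holds in some extension $N\models T$ of $M$ already holds in $M$. Since existential closure depends only on the class of substructures of models of $T$, I would work throughout inside $\mathcal{K}:=\mathrm{Mod}(T_\forall)$, the models of the universal part of $T$; being universally axiomatized, $\mathcal{K}$ is closed under unions of chains, so the standard chain construction shows that every member of $\mathcal{K}$ embeds into an existentially closed one, and these exist in every infinite cardinality. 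By the Engeler--Ryll-Nardzewski--Svenonius theorem (Theorem~\ref{thm:ryll-nardzewsky}), for each $n$ there are only finitely many complete $n$-types consistent with $T$; consequently there are only finitely many existential $n$-types realized in existentially closed models of $\mathcal{K}$, and each is isolated among them by a single existential formula $\delta_e(\bar{x})$.

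The crucial step, and the one I expect to be the main obstacle, is to show that the existentially closed models form an \emph{elementary} class. Call an existential formula $\psi(\bar{x})$ \emph{admissible} for the existential type $e(\bar{x})$ if some model of $\mathcal{K}$ carries a tuple realizing $e$ and satisfying $\psi$. Using that there are only finitely many relevant types per arity and that each is isolated by $\delta_e$, I would assemble the axiom scheme
\[ T^* = \bigl\{\, \forall \bar{x}\,\bigl(\delta_e(\bar{x}) \to \psi(\bar{x})\bigr) : e \text{ an existential type as above, } \psi \text{ admissible for } e \,\bigr\}, \]
and prove that $T_\forall\cup T^*$ axiomatizes exactly the existentially closed models. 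The forward inclusion is immediate from admissibility; the delicate direction is that a model of $T_\forall\cup T^*$ is existentially closed, where one must verify that every existential demand arising from an extension is already listed as an admissible instance over the isolating formula of the appropriate type. Controlling this simultaneously for all tuples is precisely where $\aleph_0$-categoricity---through the finiteness and isolation of types---is indispensable.

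Granting elementarity, I would set $T':=\Th(\{M\in\mathcal{K}:M\text{ existentially closed}\})$ and check that $T'$ is a model companion of $T$. Mutual model-consistency holds because every model of $T$ lies in $\mathcal{K}$ and hence embeds in an existentially closed model, while every model of $T'$ lies in $\mathcal{K}$ and so embeds in a model of $T$; this yields $T'_\forall=T_\forall$. Model-completeness of $T'$ follows from the standard fact that an embedding between two existentially closed models of an inductive class is elementary. In particular every formula is $T'$-equivalent to an existential one, so the existential type of a tuple determines its full $T'$-type.

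Finally, to obtain $\aleph_0$-categoricity of $T'$ I would run a back-and-forth between two countable existentially closed models $M$ and $N$. Maintaining a finite partial map $\bar{a}\mapsto\bar{b}$ whose domain and range realize the same existential type, and given a new $a'\in M$, I let $e$ be the (isolated) existential type of $\bar{a}a'$; then $M\models\exists x'\,\delta_e(\bar{a},x')$, this formula is existential and true of $\bar{b}$ as well since $\bar{a}\equiv\bar{b}$ existentially, so $N$ supplies a matching $b'$, and model-completeness promotes equality of existential types to equality of full types. Hence all countable models of $T'$ are isomorphic. Since every existentially closed model is infinite (a finite structure fails the existential sentences asserting arbitrarily many distinct elements, which hold in its infinite extensions) and $T$ has no finite models, $T'$ has no finite models either, so the \L o\'s--Vaught test makes $T'$ complete and Theorem~\ref{thm:ryll-nardzewsky} certifies that $T'$ is $\aleph_0$-categorical.
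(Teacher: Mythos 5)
Note first that the paper contains no proof of this statement: Theorem~\ref{saracino} is imported from \cite{Sar73} and used as a black box, so your attempt can only be measured against the standard argument --- which is indeed the route you outline (existentially closed models of $T_\forall$, finiteness of types via Theorem~\ref{thm:ryll-nardzewsky}). Within that outline there is a genuine gap, and it sits exactly at the step you yourself flag as the main obstacle: the scheme $T^*$ does not axiomatize the existentially closed models, and the ``delicate direction'' is not merely left unverified --- it is false for the scheme as you set it up. The axioms $\forall\bar{x}\,(\delta_e(\bar{x})\to\psi(\bar{x}))$ say nothing about a tuple that satisfies \emph{none} of the isolating formulas $\delta_e$, and nothing in $T_\forall\cup T^*$ forces every tuple to satisfy one.

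Concretely, let $T$ be the theory of infinitely many disjoint paths on three vertices (one binary relation $R$; this $T$ is $\aleph_0$-categorical with no finite models). Then $\mathrm{Mod}(T_\forall)$ is the class of disjoint unions of paths on at most three vertices, the existentially closed models are exactly the models of $T$, and there are two maximal existential $1$-types (endpoint, midpoint) sharing the same quantifier-free type, so every legitimate isolating formula is genuinely existential. Let $M$ be a model of $T$ plus one extra isolated vertex $v$. Any tuple containing $v$ can be completed, in two different extensions of $M$ inside $\mathrm{Mod}(T_\forall)$, to tuples of two \emph{distinct} maximal existential types; its existential type in $M$ therefore lies in the intersection of two distinct maximal types, so by the isolation property it satisfies no $\delta_e$ whatsoever, for any admissible choice of the $\delta_e$'s. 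Tuples avoiding $v$ realize their maximal types outright. Hence $M\models T_\forall\cup T^*$, yet $M$ is not existentially closed: $\exists y\,R(v,y)$ holds in an extension but fails in $M$. The missing idea is the covering scheme: for each $n$ the axiom $\forall\bar{x}\,(\delta_{e_1}(\bar{x})\vee\dots\vee\delta_{e_m}(\bar{x}))$, where $e_1,\dots,e_m$ are the maximal existential $n$-types. These sentences hold in every existentially closed model, and with them the delicate direction becomes routine: every tuple in a model of the enlarged theory satisfies some $\delta_{e_i}$, hence realizes $e_i$ in full, and maximality of $e_i$ then forbids any existential formula from becoming true only in an extension. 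A secondary, fillable gap: your claim that each of the finitely many existential types realized in e.c.\ models is isolated by a single existential formula presupposes that these types are pairwise incomparable; this is the standard fact that existential types of tuples in e.c.\ models are maximal (a short compactness argument with diagrams), which you use throughout but never justify. The remainder of your argument (Robinson's test for model completeness, the back-and-forth, the \L{}o\'s--Vaught test) is sound once elementarity is actually in place.
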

An immediate consequence is, that for every countably infinite, $\aleph_0$-categorical structure $\bU$ there exists a countably infinite $\aleph_0$-categorical structure $\bU'$, such that $\Age(\bU)=\Age(\bU')$, and such that $\bU'$ is model-complete. Moreover $\bU'$ is uniquely determined, up to isomorphism.

Consequently, whenever we have an $\aleph_0$-categorical universal element in a hom-equivalence class, then we also have a distinguished $\aleph_0$-categorical universal element---namely, the model-complete one.

In order to be able to state our contribution to the existence problem of universal structures, we have to introduce the notion of strict amalgamation classes due to Dolinka (cf. \cite[Sec.2.1]{Dol12}). Let $\cC$ be a \Fraisse-class (not necessarily of relational structures). Then we say that $\cC$ has the \emph{strict amalgamation property} if every pair of morphisms in $(\cC,\injto)$ with the same domain has a pushout in $(\overline{\cC},\rightarrow)$. Note that these pushouts will always be amalgams. Thus the strict amalgamation property postulates canonical amalgams. 

Similarly, we say that $\cC$ has the \emph{strict joint embedding property} if every pair of structures from $\cC$ has a coproduct in $(\overline{\cC},\rightarrow)$. If $(\overline\cC,\injto)$ has a finite initial object, then the strict joint embedding property follows from the strict amalgamation property. This is always the case for classes of relational structures.

Finally, a \emph{strict \Fraisse-class} is a \Fraisse-class that enjoys the strict joint embedding property and the strict amalgamation property.

If $\cU$ is a strict \Fraisse-class, then a \Fraisse-class $\cC$ that is a subclass of $\cU$,  will be called \emph{free in $\cC$} if it is closed with respect to finite coproducts and with respect to canonical amalgams in $\cU$.

Note that every free amalgamation class of relational structures over the signature $R$, in our terminology, is a free \Fraisse-class in the class of all finite relational $R$-structures.  Moreover, every free amalgamation class is also a strict \Fraisse-class. However, there are strict \Fraisse-classes that are not free amalgamation classes. The class of finite partial orders is an example.

The following is an instantiation into the model-theoretic world of a more general category-theoretic result, that will be presented in Section~\ref{xs6}:
\begin{theorem}\label{univ-struc}
	Let $\cU$ be a strict \Fraisse-class of relational structures, and let $\cC$ be a \Fraisse-class that is free in $\cU$. Let $\bT\in\overline\cU$.  Then
	\begin{enumerate}
		\item $\overline{\cC}\cap\overline{\CSP(\bT)}$ has a universal element $\bU_{\cC,\bT}$,
		\item if the \Fraisse-limit of $\cC$ and $\bT$ each have an oligomorphic automorphism group (i.e. each is finite or $\aleph_0$-categorical), then  $\overline{\cC}\cap\overline{\CSP(\bT)}$ has a universal element $\bU_{\cC,\bT}$ that is finite or $\aleph_0$-categorical.
	\end{enumerate}
	If $\bT\in\overline{\cC}$, then $\bU_{\cC,\bT}$ can be chosen as a co-retract of $\bT$.
\end{theorem}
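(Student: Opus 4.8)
The plan is to reduce the statement to the construction of a single countable structure that is universal for the age $\mathcal{D}:=\cC\cap\CSP(\bT)$, and to build that structure by a \Fraisse-type limit carried out in the category of $\bT$-colored (constraint) structures. First I would record that $\mathcal{D}$ is again an age: it is hereditary (both $\cC$ and $\CSP(\bT)$ are closed under substructures), it has only countably many isomorphism types, and it has the (JEP) via disjoint unions (coproducts land in $\cC$ by freeness and in $\CSP(\bT)$ because $a_1\sqcup a_2$ colors $\bB_1\sqcup\bB_2$). Consequently $\overline{\cC}\cap\overline{\CSP(\bT)}=\overline{\mathcal{D}}$, and a universal element of this class is exactly a countable $\bU$ with $\Age(\bU)=\mathcal{D}$ enjoying the one-sided extension property: every embedding of a finite $\bP\injto\bU$ into some $\bQ\in\mathcal{D}$ can be completed to $\bQ\injto\bU$. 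The obstruction to producing such a $\bU$ by a naive \Fraisse{} argument is that $\mathcal{D}$ need \emph{not} have the (AP): the canonical amalgam of $\bB_1,\bB_2$ over $\bA$ in $\cU$ lies in $\cC$ by freeness, but two colorings $b_1,b_2$ witnessing $\bB_1,\bB_2\in\CSP(\bT)$ may disagree on $\bA$, so the amalgam need not map to $\bT$.

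The remedy, and the main tool, is to pass to constraint structures $(\bA,a)$ with $\bA\in\cC$ and $a\colon\bA\to\bT$. Amalgamation becomes canonical once colorings are tracked: given a strong span $(\bB_1,b_1)\hookleftarrow(\bA,a)\hookrightarrow(\bB_2,b_2)$ (so $b_1$ and $b_2$ restrict to $a$ on $\bA$), the canonical amalgam $\bC$ of $\bB_1,\bB_2$ in $\cU$ is in $\cC$ by freeness, and since $b_1,b_2$ agree on $\bA$ the universal property of the pushout supplies a coloring $c\colon\bC\to\bT$; thus $(\bC,c)$ is again a constraint structure, and the class of finite constraint structures is a \Fraisse{} class in the comma category. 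I would then invoke the Droste--G\"obel categorical \Fraisse{} theorem in the form of the comma-category result of Section~\ref{xs6} to obtain a universal homogeneous constraint structure. Here the distinction between \emph{weak} embeddings (underlying embeddings, colorings allowed to change) and \emph{strong} embeddings (colorings preserved) is decisive: the weak universal homogeneous object has underlying structure $\bU_{\cC,\bT}$ with $\Age(\bU_{\cC,\bT})=\mathcal{D}$, whereas the strong one additionally carries a global coloring $u\colon\bU_{\cC,\bT}\to\bT$.

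The hard part is the universality of the underlying structure for \emph{all} of $\overline{\mathcal{D}}$, and this is exactly where the weak/strong dichotomy earns its keep. If one embeds $\bB\in\overline{\mathcal{D}}$ by a one-sided back-and-forth through the \emph{strong} limit, then at each stage one is forced to extend a coloring of the current finite part, and these colorings accumulate to a global homomorphism $\bB\to\bT$; for infinite $\bT$ this may fail, since $\overline{\CSP(\bT)}$ genuinely contains structures with no homomorphism to $\bT$ (so a structure mapping to $\bT$ can never be universal). I would therefore run the back-and-forth through the \emph{weak} limit instead: each finite piece of $\bB$ is colorable because it lies in $\CSP(\bT)$, weak embeddings impose no coherence between successive colorings, and the weak extension property (proved from the canonical-amalgam argument above) lets every finite extension task be solved. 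The colorings then serve only as local bookkeeping guaranteeing membership in $\mathcal{D}$ at each step, so the limit $\bU_{\cC,\bT}$ has age exactly $\mathcal{D}$ and need not itself map to $\bT$; this yields (1).

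For (2) I would use that when the \Fraisse-limit of $\cC$ and $\bT$ are each finite or $\aleph_0$-categorical, weak oligomorphy restores compactness: by Lemma~\ref{prop:wo-hom} every $\bB\in\overline{\mathcal{D}}$ already satisfies $\bB\to\bT$, so one may color $\bB$ globally and the \emph{strong} limit $(\bU_{\cC,\bT},u)$ is itself universal. Its oligomorphy would follow by counting colored $n$-types: the number of orbits of $\Aut(\bU_{\cC,\bT})$ on $n$-tuples is at most the number of orbits of its strong automorphism group, which by colored homogeneity equals the number of colored isomorphism types of $n$-element constraint structures; this in turn is bounded by the (finite) number of $n$-element structures in $\cC$ up to isomorphism times the (finite) number of colorings up to $\Aut(\bT)$, both finitenesses coming from oligomorphy of the two given limits. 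Ryll--Nardzewski (Theorem~\ref{thm:ryll-nardzewsky}) then gives $\aleph_0$-categoricity. Finally, if $\bT\in\overline{\cC}$, then $(\bT,\mathrm{id}_\bT)$ is itself a constraint structure whose colored age sits inside that of the strong limit, so it embeds strongly, $\iota\colon(\bT,\mathrm{id}_\bT)\hookrightarrow(\bU_{\cC,\bT},u)$ with $u\circ\iota=\mathrm{id}_\bT$; thus $u$ retracts $\bU_{\cC,\bT}$ onto a copy of $\bT$, exhibiting $\bU_{\cC,\bT}$ as a co-retract of $\bT$.
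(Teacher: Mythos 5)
You follow the paper's route---colorings, canonical amalgamation in the comma category, Droste--G\"obel---and several of your steps are sound: reducing the co-retract clause to a strong embedding of $(\bT,1_\bT)$ is exactly Proposition~\ref{retract}, and your use of Lemma~\ref{prop:wo-hom} to conclude that under the hypotheses of (2) every member of $\overline{\mathcal{D}}$ maps to $\bT$ (so that the colored limit really is universal there) is correct, and even makes explicit a step the paper's proof leaves implicit. The first genuine gap is your treatment of (1). In the paper's weak category the objects are still pairs $(\bA,a)$ with a \emph{global} coloring; a weak embedding $(f,g):(\bA,a)\to(\bB,b)$ must satisfy $b\circ f=g\circ a$ with $g\in\Aut(\bT)$, so colorings along a chain of weak embeddings do cohere up to automorphisms of $\bT$, and the colimit of such a chain can be recolored coherently (replace $b_i$ by $g_1^{-1}\circ\dots\circ g_{i-1}^{-1}\circ b_i$); indeed, the universal w-homogeneous object is the \emph{same} object as the strong universal homogeneous one (Theorem~\ref{hom-constraints2}, Proposition~\ref{equal-univ}). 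So the ``weak limit'' you propose carries a global coloring $u:\bU_{\cC,\bT}\to\bT$ after all; its age is indeed $\mathcal{D}$, but age-universality is not universality for $\overline{\mathcal{D}}$. By your own (correct) observation this is fatal in general: with $\cC=\cU$ the class of all finite graphs and $\bT$ the disjoint union of all finite complete graphs, the countable complete graph lies in $\overline{\cC}\cap\overline{\CSP(\bT)}$ yet admits no homomorphism to $\bT$, hence embeds into no structure that maps to $\bT$. Concretely, your back-and-forth stalls because at each step the next finite piece must be colored compatibly (up to $\Aut(\bT)$) with the coloring already imposed by $u$ on the part built so far, and such a compatible coloring need not exist. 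Thus your argument proves (1) only when every member of $\overline{\mathcal{D}}$ maps to $\bT$, which is exactly case (2). (You have in fact located a soft spot of the paper's own proof of (1), which likewise takes the strong limit as $\bU_{\cC,\bT}$; but your repair does not close it.)

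The second gap is the orbit count in (2). A strong automorphism satisfies $u\circ f=u$ and therefore preserves every fibre of $u$ setwise; hence tuples with distinct $u$-images lie in distinct $\sAut(\bU,u)$-orbits, and when $\bT$ is infinite (so that $u$ has infinite image, e.g.\ because $u$ is a retraction when $\bT\in\overline{\cC}$) the group $\sAut(\bU,u)$ has infinitely many orbits already on single elements. Strong homogeneity identifies $\sAut$-orbits of $n$-tuples with isomorphism types of tuples carrying their \emph{exact} colorings, not colorings up to $\Aut(\bT)$; so your chain ``orbits of $\Aut(\bU_{\cC,\bT})$ at most orbits of $\sAut(\bU,u)$ at most (structure types) $\times$ (colorings up to $\Aut(\bT)$)'' fails at the second step, where the true count is $|T|^n$-many colorings. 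What counts colorings up to $\Aut(\bT)$ is the color automorphism group $\cAut(\bU,u)$, and to show that it identifies all tuples of the same weak type you need homogeneity with respect to \emph{weak} embeddings. That is a substantive additional theorem (Theorem~\ref{hom-constraints2}), which the paper obtains via the $G$-universal, $G$-homogeneous objects of Propositions~\ref{guniversal} and \ref{equal-univ}, and then exploits in Proposition~\ref{hom-const-waut} before concluding as in the paper's proof that $\cAut(\bU,u)\le\Aut(\bU)$ forces $\Aut(\bU)$ to be oligomorphic. Your proposal never establishes w-homogeneity, so the finiteness of the orbit count---and with it the $\aleph_0$-categoricity claimed in (2)---is unproven.
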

We will  postpone the proof of Theorem~\ref{univ-struc} to Section~\ref{xs3}, as we need to to construct some tools first. It is our goal to construct $\bU_{\cC,\bT}$ as the reduct of a \Fraisse-limit of a class of expanded structures (a usual method). However, our way of expanding structures will leave the domain of model theory. In the next section we will elaborate on this  by introducing and studying constraint relational structures.

Before starting, let us give a direct consequence of Theorem~\ref{univ-struc} that is relevant to hom-equivalence classes.
\begin{corollary}
	Let $R$ be a countable relational signature, and let $\bT$ be a countable $R$-structure. Then $\HE(\bT)$ has a largest element. Moreover, if $R$ is finite and $\bT$ is weakly oligomorphic, then $\HE(\bT)$ has, up to isomorphism, a unique $\aleph_0$-categorical, model complete largest element.
\end{corollary}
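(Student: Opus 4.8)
The plan is to obtain both assertions from Theorem~\ref{univ-struc}, applied with $\cU=\cC$ equal to the class of \emph{all} finite $R$-structures. For the first assertion I would first note that this class $\cC$ has only countably many isomorphism types, since it equals $\CSP(\bT_0)$ for the one-point $R$-structure $\bT_0$ all of whose relations are full, and every CSP over a countable signature is an age; moreover it is closed under free amalgamation (the free amalgam of two finite $R$-structures over a common substructure is again a finite $R$-structure), so it is a free amalgamation class and hence a strict \Fraisse-class, and, being all of $\cU$, it is trivially free in $\cU=\cC$. As $\Age(\bT)\subseteq\cC$ we have $\bT\in\overline\cU$, and since $\overline\cC$ is the class of all countable $R$-structures, $\overline\cC\cap\overline{\CSP(\bT)}=\overline{\CSP(\bT)}$. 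Theorem~\ref{univ-struc}(1) then supplies a universal element $\bU:=\bU_{\cC,\bT}$ of $\overline{\CSP(\bT)}$, which by the final clause of that theorem (using $\bT\in\overline\cC$) may be taken as a co-retract of $\bT$, so that $\bT\leftrightarrow\bU$ and $\bU\in\HE(\bT)$. Since every $\bB\in\HE(\bT)$ satisfies $\bB\to\bT$, its age lies in $\CSP(\bT)$, whence $\bB\in\overline{\CSP(\bT)}$ and $\bB\injto\bU$; thus $\bU$ is a largest element of $\HE(\bT)$. I would stress that this argument needs neither finiteness of $R$ nor weak oligomorphy: the fact that a CSP over a countable signature is an age is exactly what makes it go through for arbitrary countable $R$.

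For the second assertion let $R$ be finite and $\bT$ weakly oligomorphic. Here I would invoke part~(2) of Theorem~\ref{univ-struc}, which requires the \Fraisse-limit of $\cC$ and the template to have oligomorphic automorphism groups. The \Fraisse-limit of $\cC$ is the generic $R$-structure; being homogeneous over a finite relational signature it is $\aleph_0$-categorical. The template, however, cannot be $\bT$ itself, because weak oligomorphy does not make $\Aut(\bT)$ oligomorphic; this is the crux of the argument. Instead I would first replace $\bT$ by a finite or $\aleph_0$-categorical structure $\bC$ with $\bC\leftrightarrow\bT$, as furnished by \cite[Thm.7.2]{AgeHHnew} (together with Theorem~\ref{bod}). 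Then $\Aut(\bC)$ is oligomorphic, $\CSP(\bC)=\CSP(\bT)$ and $\HE(\bC)=\HE(\bT)$, so Theorem~\ref{univ-struc}(2) applied to $\cC$ and $\bC$ yields a universal element $\bU$ of $\overline{\CSP(\bT)}$ that is finite or $\aleph_0$-categorical and, as before, a co-retract of $\bC$; hence $\bU$ is a largest element of $\HE(\bT)$.

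To pin down the distinguished, model-complete representative I would pass to the model companion. If $\bU$ is finite it is already model-complete and we are done. If $\bU$ is $\aleph_0$-categorical, Saracino's Theorem~\ref{saracino}, in the form recorded after it, gives a model-complete $\aleph_0$-categorical $\bU'$, unique up to isomorphism, with $\Age(\bU')=\Age(\bU)=\CSP(\bT)$. Since $\bU$ is a model of $\Th(\bU)$ and $\bU'$ realizes its model companion, $\bU$ embeds into a countable model of $\Th(\bU')$, which by $\aleph_0$-categoricity is $\bU'$ itself; thus $\bU\injto\bU'$, and so $\bU'$ is again universal for $\overline{\CSP(\bT)}$, hence largest. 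Moreover $\bU$ and $\bU'$ have the same age and are both weakly oligomorphic, so Lemma~\ref{prop:wo-hom} gives $\bU\leftrightarrow\bU'$ and hence $\bU'\leftrightarrow\bT$; therefore $\bU'$ is an $\aleph_0$-categorical, model-complete largest element of $\HE(\bT)$.

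For uniqueness, suppose $\mathbf V$ is any $\aleph_0$-categorical, model-complete largest element of $\HE(\bT)$. Being largest, $\mathbf V$ and $\bU'$ embed into each other, so they share the same age, and hence the same universal theory $T_\forall$ (for relational structures the universal consequences are determined by the age). A model-complete theory is the model companion of its universal part, and model companions are unique; thus $\Th(\mathbf V)=\Th(\bU')$, and by the Ryll-Nardzewski Theorem~\ref{thm:ryll-nardzewsky} $\mathbf V\cong\bU'$. The step I expect to be the main obstacle is the second assertion: one must not feed the merely weakly oligomorphic $\bT$ into Theorem~\ref{univ-struc}(2), but route through its oligomorphic hom-equivalent core, and then bridge from a universal $\aleph_0$-categorical structure to a universal \emph{model-complete} one via the model companion while keeping the age---and thus both universality and hom-equivalence---under control.
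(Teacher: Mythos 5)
Your proof is correct and follows essentially the same route as the paper's: take $\cU=\cC$ to be the class of all finite $R$-structures, apply Theorem~\ref{univ-struc} together with its final clause to obtain a universal co-retract of $\bT$ (hence a largest element of $\HE(\bT)$), replace a weakly oligomorphic $\bT$ by a hom-equivalent finite or $\aleph_0$-categorical structure before invoking part~(2) (your citation of \cite[Thm.7.2]{AgeHHnew} together with Theorem~\ref{bod} is the intended reference; the paper's proof nominally cites Proposition~\ref{wo-olig}), and finish via Saracino's theorem---your explicit model-companion, age-transfer and uniqueness arguments are exactly what the paper compresses into ``follows from Theorem~\ref{saracino}''. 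The only small divergence is your branch ``if $\bU$ is finite it is already model-complete and we are done,'' which as stated would not deliver the claimed $\aleph_0$-categorical largest element; it is in fact vacuous, and the paper closes it by observing that $\CSP(\bT)$ contains structures of arbitrary size, so any universal element of $\overline{\CSP(\bT)}$ is infinite and therefore $\aleph_0$-categorical.
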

\begin{proof}
	Note that the class of all finite $R$-structures is a free amalgamation class, and hence a strict \Fraisse-class. Denote this class by $\cU$. Let $\cC:=\cU$. Then, by Theorem~\ref{univ-struc}, there exists a universal element $\bU_\bT$ in $\overline{\CSP(\bT)}$. Moreover, $\bT$ is a retract of $\bU_\bT$. In particular, $\bU_\bT$ is in $\HE(\bT)$. 
	
	If $R$ is finite, then $\cC$ has an $\aleph_0$-categorical \Fraisse-limit. By Proposition~\ref{wo-olig}, it follows that $\bT$ is homomorphism-equivalent to a finite or $\aleph_0$-categorical structure. So we can without loss of generality assume that the automorphism group of $\bT$ is oligomorphic. However, then Theorem~\ref{univ-struc} gives us additionally, that $\bU_\bT$ can be chosen to be finite or $\aleph_0$-categorical. Since $\CSP(\bT)$ contains structures of arbitrary size, we conclude that $\bU_\bT$ is $\aleph_0$-categorical. Now the existence and uniqueness of an $\aleph_0$-categorical, model-complete universal structure in $\HE(\bT)$ follows from Theorem~\ref{saracino}.
\end{proof}

\section{Constraint structures}\label{s4}
Throughout this section we fix a relational signature $R$. Also we fix a universe $\cU$ of countable $R$-structures which we require to be a strict \Fraisse-class. 

Let $\bT\in\overline{\cU}$, and let $\cC$ be a \Fraisse-class that is free in $\cU$.  A  \emph{$\bT$-colored  structure in $\overline{\cC}$} is a pair $(\bA,a)$ such that $\bA\in\overline{\cC}$ and  $a:\bA\to\bT$ is a homomorphism. If $\cC$ is known from the context, we will usually leave away  ``in $\overline{\cC}$''.

A (strong) homomorphism $f$ between $\bT$-colored structures $(\bA,a)$ and $(\bB,b)$ is a homomorphism from $\bA$ to $\bB$ such that $b\circ f=a$. A homomorphism between $\bT$-colored structures is called \emph{embedding} if it is an embedding between the corresponding $R$-structures. 

With $\Col_\cC(\bT)$ we will denote the class of all countable  $\bT$-colored structures in $\overline{\cC}$. As usual, by $(\Col_\cC(\bT),\to)$ and $(\Col_\cC(\bT),\injto)$ we denote the corresponding categories with homomorphisms and embeddings, respectively. Endomorphisms are homomorphisms of a $\bT$-colored structure to itself and automorphisms are bijective endomorphisms whose inverse is an endomorphism, too. The group of automorphisms of $(\bA,a)$ will be denoted by $\sAut(\bA,a)$ (we use the notion $\sAut$, standing for ``strong'' automorphisms, instead of $\Aut$, because later on we will consider also weak automorphisms of $\bT$-colored structures).

We have the following \Fraisse-type result for $\bT$-colored structures: 
\begin{theorem}\label{hom-constraints}
	With the notions from above,  there exists a countable  $(\bU,u)\in\Col_\cC(\bT)$ such that
	\begin{enumerate}
		\item for every countable $\bT$-colored structure $(\bA,a)\in\Col_\cC(\bT)$ there exists an embedding $\iota:(\bA,a)\injto(\bU,u)$  (universality),
		\item for every finite $\bT$-colored structure $(\bA,a)\in\Col_\cC(\bT)$, and for all embeddings $\iota_1,\iota_2:(\bA,a)\injto(\bU,u)$ there exists an automorphism $f$ of $(\bU,u)$ such that $f\circ\iota_1=\iota_2$ (homogeneity).
	\end{enumerate}
	Moreover, all countable universal homogeneous $\bT$-colored structures are mutually isomorphic. 
\end{theorem}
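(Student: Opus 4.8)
The plan is to exhibit $(\Col_\cC(\bT),\injto)$ as a categorical \Fraisse{} situation and invoke the Droste--G\"obel theorem. Let $\cK$ be the full subcategory of $(\Col_\cC(\bT),\injto)$ whose objects are the \emph{finite} $\bT$-colored structures, i.e.\ those $(\bA,a)$ with $\bA\in\cC$ finite. Every object $(\bA,a)\in\Col_\cC(\bT)$ is the directed colimit, along embeddings, of the colored substructures $(\bA',a|_{A'})$ with $\bA'$ a finite substructure of $\bA$; since $\Age(\bA)\subseteq\cC$, each such $(\bA',a|_{A'})$ is an object of $\cK$. Thus $\cK$ is a generating subcategory of finite objects, and it suffices to check that $\cK$ qualifies as an ``age'' in the categorical sense: that it has only countably many isomorphism types and enjoys the joint embedding and amalgamation properties.

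For countability, observe that $\cC$ has only countably many isomorphism types of finite structures, and for a fixed finite $\bA$ there are at most $|T|^{|A|}\le\aleph_0$ homomorphisms $a\colon\bA\to\bT$; hence $\cK$ has countably many isomorphism types. For joint embedding, given $(\bA_1,a_1),(\bA_2,a_2)\in\cK$, form the coproduct $\bC:=\bA_1+\bA_2$ in $(\overline\cC,\to)$, which exists and lies in $\cC$ because $\cC$ is free in $\cU$ and hence closed under finite coproducts (and the underlying carrier is the disjoint union, so $\bC$ is finite). The universal property of the coproduct, applied to the cocone $a_1\colon\bA_1\to\bT$, $a_2\colon\bA_2\to\bT$, yields a unique $c\colon\bC\to\bT$ restricting to $a_1$ and $a_2$, so $(\bC,c)\in\cK$ receives both $(\bA_i,a_i)$ as colored embeddings.

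The crux is amalgamation. Given embeddings $f_i\colon(\bA,a)\injto(\bB_i,b_i)$ ($i=1,2$) in $\cK$, take the canonical amalgam $\bC$ of $f_1,f_2$, that is, the pushout in $(\overline\cU,\to)$ guaranteed by the strict amalgamation property of $\cU$. This pushout is an amalgam, so its carrier is the amalgamated union $B_1\sqcup_A B_2$, which is finite, and it lies in $\cC$ because $\cC$ is closed under canonical amalgams in $\cU$; it comes with embeddings $g_i\colon\bB_i\injto\bC$ satisfying $g_1f_1=g_2f_2$. Since $b_1f_1=a=b_2f_2$, the colorings $b_1,b_2$ form a cocone over the pushout diagram, and as $\bT\in\overline\cU$ the universal property of $\bC$ in $(\overline\cU,\to)$ furnishes a unique $c\colon\bC\to\bT$ with $cg_1=b_1$ and $cg_2=b_2$. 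Hence $g_1,g_2$ are embeddings of colored structures and $(\bC,c)$ amalgamates $(\bB_1,b_1)$ and $(\bB_2,b_2)$ over $(\bA,a)$ in $\cK$. I expect this to be the main obstacle, and it is precisely the \emph{strictness} of $\cU$ (the pushout property, not merely the existence of some amalgam) that makes it work: plain amalgamation would produce an $R$-structure $\bC$ with no canonical homomorphism to $\bT$ compatible with $b_1$ and $b_2$, whereas the pushout supplies this coloring automatically.

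With countably many isomorphism types, the joint embedding property, and the amalgamation property verified for $\cK$, the categorical variant of \Fraisse's theorem due to Droste and G\"obel yields a colimit object $(\bU,u)$ that is universal for the objects of $\cK$ and homogeneous with respect to embeddings between finite objects, and that is unique up to isomorphism. Universality for \emph{all} countable colored structures, as in~(1), follows by writing an arbitrary $(\bA,a)\in\Col_\cC(\bT)$ as the directed colimit of its finite colored substructures and extending the embeddings stepwise using homogeneity; property~(2) is the homogeneity statement itself, and the uniqueness clause is obtained from the standard back-and-forth argument, which is part of the Droste--G\"obel package.
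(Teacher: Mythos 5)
Your proof is correct, but it takes a genuinely more direct route than the paper. The paper never checks the \Fraisse{} conditions on $(\Col_\cC(\bT),\injto)$ by hand: it first builds general machinery for comma categories (Section~\ref{s6}), proves that a comma category $(F\downarrow G)$ arising from a $\lambda$-amalgamation pair and a $\lambda$-amalgamation category is itself a $\lambda$-amalgamation category (Theorem~\ref{mainconstruction}, via Lemmas~\ref{weak}--\ref{colimits} and Proposition~\ref{algebroidal}), and then proves Theorem~\ref{hom-constraints} in three lines by noting that $(\Col_\cC(\bT),\injto)$ is isomorphic to $(F\downarrow G)$, where $F$ is the inclusion of $(\overline{\cC},\injto)$ into $(\overline{\cU},\to)$ and $G$ sends the one-object, one-morphism category to $\bT$. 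You instead verify the Droste--G\"obel hypotheses concretely on the category of colored structures---essentially the ``second road'' the paper explicitly declined to take---while correctly isolating the same crux: strictness of $\cU$ and freeness of $\cC$ make the pushout (resp.\ coproduct) in $(\overline{\cU},\to)$ land in $\cC$, and its universal property manufactures the coloring $c:\bC\to\bT$, which a mere amalgam would not provide. Your route buys brevity and self-containedness; the paper's route buys reuse: the same comma-category theorem is applied again with $\fB$ a one-object category whose morphisms form a countable subgroup $G\le\Aut(\bT)$ (Proposition~\ref{guniversal}), which is what later yields w-homogeneity (Theorem~\ref{hom-constraints2}) and the oligomorphy of $\cAut(\bU,u)$; your direct verification would have to be redone there. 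Two small imprecisions, neither fatal: the coproduct must be taken in $(\overline{\cU},\to)$, not $(\overline{\cC},\to)$, since its universal property is applied to a cocone into $\bT$, which need not lie in $\overline{\cC}$ (your amalgamation paragraph states this correctly, your joint-embedding paragraph does not); and your parenthetical claims about the carriers being disjoint or amalgamated unions are neither needed nor literally true for abstract strict \Fraisse-classes---finiteness and membership in $\cC$ follow directly from $\cC$ being closed under finite coproducts and canonical amalgams in $\cU$.
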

For proving this theorem we are standing at a cross-road and see three ways to proceed. Either we translate the $\bT$-colored structures to models and use \Fraisse's theorem, or we prove yet another \Fraisse-type theorem especially for $\bT$-colored structures, or we use an already available \Fraisse-type theorem for categories, such as the one due to Droste and G\"obel (cf. \cite{DroGoe92}). All three possibilities require considerable technical work. We decided for the third way, as it seems to us the cleanest and it promises to be the most useful in future research. The necessary techniques are quite independent of the rest of the paper. Therefore, in order not to disturb the flow of presentation, we postpone the category theoretical part to  Section~\ref{s6}, and the actual proof of Theorem~\ref{hom-constraints} to Section~\ref{xs4}.

Let us have a look onto the symmetries of universal homogeneous $\bT$-colored structures:
\begin{proposition}\label{hom-const-saut}
	Let $(\bU,u)$ be a universal homogeneous $\bT$-colored structure in $\overline\cC$. If the \Fraisse-limit of $\cC$ is finite or $\aleph_0$-categorical, and if $\bT$ is finite, then $\sAut(\bU,u)$ is oligomorphic. 
\end{proposition}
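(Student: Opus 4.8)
The plan is to bound, for each $n$, the number of orbits of $\sAut(\bU,u)$ on $U^n$ by showing that these orbits are classified by the isomorphism types of finite $\bT$-colored structures equipped with a distinguished generating $n$-tuple, and then to observe that there are only finitely many such types. The whole argument is a transfer of the Ryll-Nardzewski machinery through the homogeneity clause of Theorem~\ref{hom-constraints}.

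First I would set up the classifying invariant. Given $\bc\in U^n$, let $\bA_\bc$ be the substructure of $\bU$ induced on $\{c_1,\dots,c_n\}$ and put $a_\bc:=u|_{A_\bc}$; then $(\bA_\bc,a_\bc)$ is a finite member of $\Col_\cC(\bT)$ whose inclusion into $(\bU,u)$ is an embedding of $\bT$-colored structures. The central claim is that $\bc$ and $\bd$ lie in the same $\sAut(\bU,u)$-orbit if and only if the assignment $c_i\mapsto d_i$ is a (well-defined, color-preserving) isomorphism $(\bA_\bc,a_\bc)\to(\bA_\bd,a_\bd)$. The forward direction is immediate, since a strong automorphism carrying $\bc$ to $\bd$ restricts to such an isomorphism. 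For the converse, given an isomorphism $\phi$ of $\bT$-colored structures with $\phi(c_i)=d_i$, I would feed the finite $\bT$-colored structure $(\bA_\bc,a_\bc)$ together with the two embeddings $\iota_\bc$ (the inclusion) and $\iota_\bd\circ\phi$ into clause (2) of Theorem~\ref{hom-constraints}; this returns an automorphism $f$ of $(\bU,u)$ with $f\circ\iota_\bc=\iota_\bd\circ\phi$, i.e.\ a strong automorphism with $f(c_i)=d_i$.

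It remains to count these invariants. An isomorphism type of a finite $R$-structure on at most $n$ points together with an enumerating $n$-tuple is exactly an isomorphism type of $n$-tuple realized in $\cC$; since the \Fraisse-limit $\bF$ of $\cC$ is finite or $\aleph_0$-categorical, its automorphism group is oligomorphic by Theorem~\ref{thm:ryll-nardzewsky}, and by homogeneity of $\bF$ these tuple-types are precisely its orbits of $n$-tuples, of which there are finitely many. For each underlying structure $\bA$ on at most $n$ points, a coloring is a homomorphism $a\colon\bA\to\bT$, and as $\bT$ is finite there are at most $|T|^n$ of them. Hence there are only finitely many isomorphism types of $(\bA,a)$ with a distinguished generating $n$-tuple, so $\sAut(\bU,u)$ has only finitely many orbits on $U^n$. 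As $n$ was arbitrary, $\sAut(\bU,u)$ is oligomorphic.

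The single delicate point---and what I regard as the crux---is the invocation of homogeneity: one must recognize $\iota_\bd\circ\phi$ as a second embedding of the \emph{same} abstract finite $\bT$-colored structure $(\bA_\bc,a_\bc)$, so that clause (2) of Theorem~\ref{hom-constraints} applies and, crucially, returns an automorphism of the colored structure (hence one respecting $u$). The only bookkeeping nuisance is that $\bc$ may have repeated entries, so ``generating $n$-tuple'' must be read as an enumeration, possibly with repetitions, of a set of size at most $n$; this does not affect any of the finiteness counts.
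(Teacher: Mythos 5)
Your proof is correct and takes essentially the same approach as the paper's: the paper likewise applies the homogeneity clause of Theorem~\ref{hom-constraints} to the colored substructure induced by a tuple (phrasing your ``color-preserving isomorphism'' condition as a local isomorphism of $R$-structures together with $u(\ba)=u(\bb)$), and then counts orbits by the finitely many isomorphism types of $n$-tuples in $\bU$ times the finitely many $n$-tuples over $\bT$. The differences are purely presentational.
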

\begin{proof}	
	Let $\ba=(a_1,\dots,a_n)$, and $\bb=(b_1,\dots,b_n)$ be tuples of elements from $U$, such that the mapping $a_i\mapsto b_i$ is a local isomorphism of $R$-structures, and suppose that $u(\ba)=u(\bb)$. Let $\bA$ be the substructure of $\bU$ that is induced by $\{a_1,\dots,a_n\}$, $a:\bA\to\bT$ be the restriction of $u$ to $\bA$. Let  $\iota_1:(\bA,a)\injto (\bU,u)$  be the identical embedding, and  define $\iota_2 : (\bA,a)\injto (\bU,u)$ by $a_i\mapsto b_i$. Then, by the assumptions on $\ba$ and $\bb$,  $\iota_2$ is an embedding, and by the homogeneity of $(\bU,u)$, there exists an automorphism $f$ of $(\bU,u)$ such that $f\circ\iota_1=\iota_2$. In particular, $f(\ba)=\bb$. Since there are only finitely many isomorphism types of $n$-tuples in $\bU$, and since there are only finitely many $n$-tuples in $\bT$, it follows that  $\sAut(\bU,u)$ has only finitely many $n$-orbits. Hence it is oligomorphic.
\end{proof}

So far, what we know about the symmetries of $\bU$ is not enough to prove Theorem~\ref{univ-struc}. For an $\aleph_0$-categorical structure $\bT$ the strong automorphism group of $(\bU,u)$ does not yield enough information about $\bU$. For this reason we will add morphisms to the category of $\bT$-colored structures: 

A \emph{weak homomorphism} from $(\bA,a)$ to $(\bB,b)$ is a pair $(f,g)$ such that $f:\bA\to\bB$ is a homomorphism, $g$ is  is an automorphism of $\bT$, such that $b\circ f = g \circ a$. Weak embeddings are weak homomorphisms whose first component is an embedding. Accordingly, the group of weak automorphisms of $(\bA,a)$ is defined like
\[\wAut(\bA,a):=\{(f,g)\mid f\in\Aut(\bA),\,a\circ f=g\circ a\}.\]
We define $(f_1,g_1)\circ(f_2,g_2):=(f_1\circ f_2,g_1\circ g_2)$.
With $(\Col_\cC(\bT),\to_w)$ and $(\Col_\cC(\bT),\injto_w)$ we will denote the categories of $\bT$-colored structures in $\overline{\cC}$ with weak homomorphisms and weak embeddings, respectively.

The group of strong automorphisms embeds naturally into the group of strong automorphisms through $f\mapsto (f,1_\bT)$. 

Clearly, the two projections $\pi_1:(f,g)\mapsto f$, $\pi_2:(f,g)\mapsto g$ are group homomorphisms from $\wAut(\bA,a)$ to $\Aut(\bA)$ and $\Aut(\bT)$, respectively. The image of $\pi_1$ will be called the \emph{color automorphism group} of $(\bA,a)$, and it will be denoted by $\cAut(\bA,a)$.  The kernel of $\pi_2$ is isomorphic to $\sAut(\bA,a)$.

Let us call a $\bT$-colored structure $(\bU,u)\in\Col_\cC(\bT)$ \emph{w-homogeneous} if for every finite $\bT$-colored structure $(\bA,a)\in\Col_\cC(\bT)$, and for all weak embeddings $(f_1,g_2),(f_2,g_2):(\bA,a)\injto(\bU,u)$ there exists a weak automorphism $(f,g)$ of $(\bU,u)$ such that $(f,g)\circ(f_1,g_1)=(f_2,g_2)$.

Then we have the following \Fraisse-type result whose proof will be postponed till Section~\ref{xs4}, when all technical prerequisites are provided: 
\begin{theorem}\label{hom-constraints2}
	Let $(\bU,u)$ be a  universal homogeneous $\bT$-colored structure in $\overline{\cC}$. Then $(\bU,u)$ is w-homogeneous. 
	Moreover, all countable  $\bT$-colored structures in $\overline{\cC}$ that are universal and $w$-homogeneous are mutually isomorphic. 
\end{theorem}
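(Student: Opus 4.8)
The plan is to reduce both assertions to the strong version already proved in Theorem~\ref{hom-constraints}. The device that makes this possible is the action of $\Aut(\bT)$ on colorings: for $g\in\Aut(\bT)$ the ``rotated'' pair $(\bU,g\circ u)$ is again an object of $\Col_\cC(\bT)$, since $\bU\in\overline\cC$ and $g\circ u\colon\bU\to\bT$ is a homomorphism. I would first check that $(\bU,g\circ u)$ is again universal and (strongly) homogeneous. For universality: given countable $(\bA,a)$, universality of $(\bU,u)$ yields a strong embedding $\iota\colon(\bA,g^{-1}\circ a)\injto(\bU,u)$, i.e.\ $u\circ\iota=g^{-1}\circ a$; then $(g\circ u)\circ\iota=a$, so $\iota$ is a strong embedding of $(\bA,a)$ into $(\bU,g\circ u)$. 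For homogeneity: the same reindexing of colorings identifies the strong embeddings of a finite $(\bA,a)$ into $(\bU,g\circ u)$ with those of $(\bA,g^{-1}\circ a)$ into $(\bU,u)$, and any strong automorphism $f$ of $(\bU,u)$ is automatically one of $(\bU,g\circ u)$ because $u\circ f=u$ forces $(g\circ u)\circ f=g\circ u$.

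Having established that $(\bU,g\circ u)$ is universal homogeneous for every $g$, the uniqueness clause of Theorem~\ref{hom-constraints} supplies a strong isomorphism between $(\bU,u)$ and $(\bU,g^{-1}\circ u)$, which unwinds to an automorphism $\psi_g\in\Aut(\bU)$ with $u\circ\psi_g=g\circ u$; that is, $(\psi_g,g)$ is a weak automorphism of $(\bU,u)$. Thus the projection $\pi_2\colon\wAut(\bU,u)\to\Aut(\bT)$ is surjective. This surjectivity is the crux of the argument and the only step that invokes the uniqueness of the strong universal homogeneous object; everything after it is bookkeeping with weak morphisms, which form a group under $(f_1,g_1)\circ(f_2,g_2)=(f_1\circ f_2,g_1\circ g_2)$.

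For w-homogeneity, let $(\bA,a)$ be finite and let $(f_1,g_1),(f_2,g_2)\colon(\bA,a)\injto(\bU,u)$ be weak embeddings. Composing on the left with the weak automorphisms $(\psi_{g_1^{-1}},g_1^{-1})$ and $(\psi_{g_2^{-1}},g_2^{-1})$ cancels the color components, turning $f_j$ into genuine strong embeddings $\psi_{g_j^{-1}}\circ f_j\colon(\bA,a)\injto(\bU,u)$ (a short check using $u\circ\psi_{g_j^{-1}}=g_j^{-1}\circ u$ and $u\circ f_j=g_j\circ a$ gives $u\circ(\psi_{g_j^{-1}}\circ f_j)=a$). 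Strong homogeneity of $(\bU,u)$ then produces a strong automorphism $\theta$ with $\theta\circ(\psi_{g_1^{-1}}\circ f_1)=\psi_{g_2^{-1}}\circ f_2$. Setting $(f,g):=(\psi_{g_2^{-1}},g_2^{-1})^{-1}\circ(\theta,1_\bT)\circ(\psi_{g_1^{-1}},g_1^{-1})$ gives a weak automorphism (being a product of weak automorphisms) with $g=g_2g_1^{-1}$ and $f\circ f_1=f_2$, so that $(f,g)\circ(f_1,g_1)=(f_2,g_2)$, as required.

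For the uniqueness statement no further construction is needed: w-homogeneity implies strong homogeneity by specializing to weak embeddings with color component $1_\bT$. Indeed, given strong embeddings $\iota_1,\iota_2$ of a finite $(\bA,a)$ into $(\bU,u)$, the pairs $(\iota_1,1_\bT),(\iota_2,1_\bT)$ are weak embeddings, and any weak automorphism $(f,g)$ with $(f,g)\circ(\iota_1,1_\bT)=(\iota_2,1_\bT)$ must satisfy $g=1_\bT$, whence $f$ is a strong automorphism carrying $\iota_1$ to $\iota_2$. Therefore every countable universal w-homogeneous $\bT$-colored structure is universal and homogeneous in the strong sense, and any two such are mutually isomorphic by Theorem~\ref{hom-constraints}. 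The main obstacle is the surjectivity of $\pi_2$: once the rotation trick exhibits $(\bU,g\circ u)$ as again universal homogeneous and uniqueness delivers the isomorphism, the rest reduces to manipulating compositions of weak morphisms.
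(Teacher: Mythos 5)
Your proof is correct, but it takes a genuinely different route from the paper's. The paper proves w-homogeneity by re-running the comma-category machinery (Theorem~\ref{mainconstruction}) a second time: for the countable subgroup $G=\langle g_1,g_2\rangle\le\Aut(\bT)$ it builds a $G$-universal, $G$-homogeneous structure (Proposition~\ref{guniversal}, with $\fB$ the one-object category whose morphisms are the elements of $G$), shows such a structure is universal homogeneous in the strong sense (Proposition~\ref{equal-univ}), and then identifies it with $(\bU,u)$ via the uniqueness clause of Theorem~\ref{hom-constraints}; $G$-homogeneity then supplies the required weak automorphism. Your argument bypasses Propositions~\ref{guniversal} and~\ref{equal-univ} entirely: the rotation trick shows $(\bU,g\circ u)$ is again universal homogeneous, so the uniqueness clause of Theorem~\ref{hom-constraints} alone yields weak automorphisms $(\psi_g,g)$ for \emph{every} $g\in\Aut(\bT)$ (surjectivity of $\pi_2$), and w-homogeneity then follows by purely group-theoretic bookkeeping; your derivation of the uniqueness statement (w-homogeneity implies strong homogeneity by specializing to color component $1_\bT$) is likewise more direct than the paper's appeal to Proposition~\ref{equal-univ}. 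What your approach buys is economy and uniformity: no second pass through the categorical construction, and no need to restrict to countable subgroups of $\Aut(\bT)$ (a restriction the paper must impose because the Droste--G\"obel machinery demands $\lambda$-algebroidal bounds on morphism sets). What the paper's approach buys is the intermediate results themselves: the $G$-universal/$G$-homogeneous objects of Propositions~\ref{guniversal} and~\ref{equal-univ} are of independent interest and, as the paper remarks, the $G$-construction ``uncovers more symmetries'' beyond what is needed for this theorem.
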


Our next observation is:
\begin{lemma}
	Let $(\bA,a)$ be a $\bT$-colored structure in $\overline\cC$, such that $a$ is an epimorphism. Then $\wAut(\bA,a)\cong\cAut(\bA,a)$. 
\end{lemma}
\begin{proof}
	It is enough to show that the projection $\pi_1$ is injective. Let $(f,g_1)$, $(f,g_2)$ be weak automorphisms of $(\bA,a)$. Then, by definition $a\circ f=g_1\circ a= g_2\circ a$. Since $a$ is an epimorphism, it follows that $g_1=g_2$. 
\end{proof}

Let us now reexamine, when universal, homogeneous $\bT$-colored structures give rise to $\aleph_0$-categorical structures:
\begin{proposition}\label{hom-const-waut}
	Let $(\bU,u)$ be a universal, homogeneous $\bT$-colored structure in $\overline\cC$. If the \Fraisse-limit of $\cC$ is finite or $\aleph_0$-categorical, and if $\bT$ has an oligomorphic automorphism group, then $\cAut(\bU,u)$ is oligomorphic. 
\end{proposition}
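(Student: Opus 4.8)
The plan is to imitate the proof of Proposition~\ref{hom-const-saut}, but to replace strong homogeneity by the $w$-homogeneity supplied by Theorem~\ref{hom-constraints2}. The point is that passing from $\sAut$ to $\cAut$ lets us absorb an automorphism of $\bT$ into the symmetry: the $\cAut(\bU,u)$-orbit of a tuple should be controlled by its $R$-structure type together with the $\Aut(\bT)$-\emph{orbit} of its colors, rather than by the colors themselves. Concretely, I would fix $n$ and prove the following sufficient condition: two $n$-tuples $\ba=(a_1,\dots,a_n)$ and $\bb=(b_1,\dots,b_n)$ over $U$ lie in one $\cAut(\bU,u)$-orbit whenever (a) the assignment $a_i\mapsto b_i$ is a local isomorphism of the underlying $R$-structure $\bU$, and (b) there is a $g\in\Aut(\bT)$ with $u(\bb)=g(u(\ba))$.

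To establish this, let $\bA$ be the substructure of $\bU$ induced on $\{a_1,\dots,a_n\}$ and set $a:=u|_\bA$, so $(\bA,a)$ is a finite $\bT$-colored structure in $\overline\cC$. I would then exhibit two weak embeddings of $(\bA,a)$ into $(\bU,u)$: the inclusion $(\iota_1,1_\bT)$, for which $u\circ\iota_1=1_\bT\circ a$ holds trivially, and the pair $(\iota_2,g)$, where $\iota_2$ sends $a_i\mapsto b_i$. Condition (a) guarantees that $\iota_2$ is an embedding of $R$-structures, and condition (b) yields exactly $u\circ\iota_2=g\circ a$, so $(\iota_2,g)$ is a genuine weak embedding. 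Applying $w$-homogeneity to $(\bA,a)$ and these two weak embeddings gives a weak automorphism $(f,h)$ of $(\bU,u)$ with $(f,h)\circ(\iota_1,1_\bT)=(\iota_2,g)$; comparing first components yields $f(a_i)=b_i$, that is $f(\ba)=\bb$, while $f=\pi_1(f,h)\in\cAut(\bU,u)$ by definition of the color automorphism group. Hence $\ba$ and $\bb$ are in the same orbit.

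It then remains to count. Since $\bU\in\overline\cC$ we have $\Age(\bU)\subseteq\cC$, and because the \Fraisse-limit of $\cC$ is finite or $\aleph_0$-categorical, $\cC$ contains only finitely many isomorphism types of structures of each fixed size; together with the finitely many partitions of $\{1,\dots,n\}$ (to account for repeated entries) this bounds the number of local-isomorphism types of $n$-tuples in $\bU$ by a finite number. Oligomorphy of $\Aut(\bT)$ bounds the number of $\Aut(\bT)$-orbits on $T^n$, hence the number of possible classes of $u(\ba)$, again finitely. By the sufficient condition just proved, the $\cAut(\bU,u)$-orbit of $\ba$ is determined by the pair consisting of the local type of $\ba$ and the $\Aut(\bT)$-orbit of $u(\ba)$, so there are at most finitely many $n$-orbits. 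As $n$ was arbitrary, $\cAut(\bU,u)$ is oligomorphic.

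The only delicate points I anticipate are bookkeeping: checking that $(\iota_2,g)$ really is a weak embedding (both that $\iota_2$ is an embedding and that the coloring square $u\circ\iota_2=g\circ a$ commutes), and confirming that the passage from ``isomorphism types of substructures'' to ``types of $n$-tuples'' correctly handles tuples with repeated coordinates. Neither is a genuine obstacle; the substantive input is the $w$-homogeneity of Theorem~\ref{hom-constraints2}, which is precisely what allows an automorphism $g$ of $\bT$ to be built into the symmetry rather than forcing the colors to agree on the nose.
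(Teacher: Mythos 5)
Your proposal is correct and follows essentially the same route as the paper's own proof: both restrict $u$ to the substructure induced by $\ba$, exhibit the inclusion paired with $1_\bT$ and the map $a_i\mapsto b_i$ paired with $g\in\Aut(\bT)$ as two weak embeddings, invoke the $w$-homogeneity of Theorem~\ref{hom-constraints2} to produce a color automorphism carrying $\ba$ to $\bb$, and then count orbits via the finitely many isomorphism types of $n$-tuples in $\bU$ and the finitely many $n$-orbits of $\Aut(\bT)$. Your extra care about repeated coordinates in tuples is a minor refinement the paper leaves implicit.
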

\begin{proof}	
	Let $\ba=(a_1,\dots,a_n)$, and $\bb=(b_1,\dots,b_n)$ be tuples of elements from $U$, such that the mapping $a_i\mapsto b_i$ is a local isomorphism of $R$-structures, and suppose that $u(\ba)$ and $u(\bb)$ are in the same $n$-orbit of $\Aut(\bT)$. 
	
	Let $\bA$ be the substructure of $\bU$ that is induced by the entries of $\ba$. Let $\iota$ be the identical embedding of $\bA$ into $\bU$, and let $f:\bA\to\bU$ given by $f(a_i)=b_i$. Clearly, both $\iota$ and $f$ are homomorphisms (of $R$-structures). Let $a:=\iota\circ h$. Then $(\bA,\iota\circ u)\in\Col_\cC(\bT)$. Clearly, $(\iota,1_\bC):(\bA,\iota\circ u)\injto(\bU,u)$. Since $u(\ba)$ and $u(\bb)$ are in the same $n$-orbit of $\Aut(\bT)$, there exists a $g\in \Aut(\bT)$ that maps $u(\ba)$ to $u(\bb)$. It is now easy to see that $(f,g):(\bA,\iota\circ u)\injto(\bU,u)$ is a weak embedding. Hence, by w-homogeneity of $(\bU,u)$, there exists a weak automorphism $(v,w)$ of $(\bU,u)$ such that $(v,w)\circ(\iota,1_\bC)=(f,g)$. In particular, $v$ is a color automorphism of $(\bU,u)$ that maps $\ba$ to $\bb$. 
		
	Since there are only finitely many isomorphism types of $n$-tuples in $\bU$, and since $\Aut(\bT)$ has only  finitely many $n$-orbits, it follows that $\cAut(\bU, u)$ has only finitely many $n$-orbits. Hence it is oligomorphic. 
\end{proof}

\begin{proposition}\label{retract}
	Suppose that $\bT\in\overline{\cC}$.
	Let $(\bU,u)$ be a universal homogeneous $\bT$-colored structure in $\overline\cC$. Then $u:\bU\to\bT$ is a retraction.
\end{proposition}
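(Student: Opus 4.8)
The plan is to realize $\bT$ itself as a $\bT$-colored structure and then feed it into the universality property of $(\bU,u)$. Concretely, since $\bT\in\overline{\cC}$ by hypothesis, and since the identity map $1_\bT\colon\bT\to\bT$ is trivially a homomorphism, the pair $(\bT,1_\bT)$ is a $\bT$-colored structure in $\overline{\cC}$. Moreover $\bT$ is countable, being a member of $\overline{\cU}$, so in fact $(\bT,1_\bT)\in\Col_\cC(\bT)$ is a legitimate object to which universality may be applied.

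Next I would invoke universality, i.e.\ part~(1) of Theorem~\ref{hom-constraints}: there exists an embedding $\iota\colon(\bT,1_\bT)\injto(\bU,u)$ of $\bT$-colored structures. Unwinding the definition of a strong homomorphism of $\bT$-colored structures---namely that the color map of the codomain, composed with the underlying $R$-homomorphism, recovers the color map of the domain---this embedding satisfies $u\circ\iota=1_\bT$. Hence $\iota$ is a section of $u$, which is precisely the statement that $u$ is a retraction (with $\iota$ exhibiting $\bT$ as a retract of $\bU$).

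There is essentially no obstacle here beyond correctly identifying $(\bT,1_\bT)$ as an object of the category: the entire content of the proposition is that the hypothesis $\bT\in\overline{\cC}$ makes this identification legitimate, after which universality does all the work. The only subtlety worth flagging is that universality must be applied to a \emph{countable} structure, which is guaranteed here because every structure in $\overline{\cU}$ is countable by the standing assumption on $\cU$ fixed at the beginning of this section.
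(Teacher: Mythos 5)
Your proof is correct and follows exactly the same route as the paper's own argument: treat $(\bT,1_\bT)$ as an object of $\Col_\cC(\bT)$, apply universality of $(\bU,u)$ to obtain an embedding $\iota$ with $u\circ\iota=1_\bT$, and conclude that $u$ is a retraction. The extra care you take in checking countability of $\bT$ is a fine (if implicit in the paper) point, but the substance is identical.
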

\begin{proof}
	Since $\bT\in\overline{\cC}$, we have that $(\bT,1_\bT)\in\Col_\cC(\bT)$. Hence, there exists an embedding $\iota:(\bT,1_\bT)\injto(\bU,u)$. That is, we have $u\circ\iota=1_\bT$. Thus, $\iota$ is a right-inverse of $u$, and $u$ is a retraction. 
\end{proof}
An immediate consequence is:
\begin{corollary}
	Suppose that $\bT\in\overline{\cC}$.
	Let $(\bU,u)$ be a universal homogeneous $\bT$-colored structure in $\overline{\cC}$. Then $\cAut(\bU,u)\cong\wAut(\bU,u)$.\qed
\end{corollary}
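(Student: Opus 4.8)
The plan is to read the corollary off directly from the two results immediately preceding it, namely Proposition~\ref{retract} and the (unlabeled) Lemma just above it asserting that $\wAut(\bA,a)\cong\cAut(\bA,a)$ whenever the coloring $a$ is an epimorphism. First I would invoke Proposition~\ref{retract}: since $\bT\in\overline{\cC}$ by hypothesis, the coloring map $u\colon\bU\to\bT$ is a retraction, so there is an embedding $\iota\colon(\bT,1_\bT)\injto(\bU,u)$ with $u\circ\iota=1_\bT$.

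Next I would observe that every retraction is in particular an epimorphism. Indeed, for each $t\in T$ we have $u(\iota(t))=t$, so on the level of carriers $u$ maps $U$ onto $T$; hence $u$ is a surjective homomorphism, i.e.\ an epimorphism of $R$-structures.

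With this in hand, I would apply the preceding Lemma to the $\bT$-colored structure $(\bU,u)$. Since $u$ is an epimorphism, that Lemma shows the projection $\pi_1\colon\wAut(\bU,u)\to\cAut(\bU,u)$, $(f,g)\mapsto f$, is injective; and it is surjective by the very definition of $\cAut(\bU,u)$ as the image of $\pi_1$. Therefore $\pi_1$ is a group isomorphism, which gives $\cAut(\bU,u)\cong\wAut(\bU,u)$, as claimed.

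There is essentially no obstacle here: all of the content is carried by Proposition~\ref{retract} together with the epimorphism Lemma, and the only elementary point to verify is that a retraction is surjective, which is immediate from the existence of a right inverse. The reason the statement is worth recording is conceptual rather than technical—once $\bT$ sits inside $\overline{\cC}$, the weak and color automorphism groups of the universal homogeneous object coincide, so the oligomorphy conclusion of Proposition~\ref{hom-const-waut} (phrased for $\cAut$) transfers verbatim to $\wAut(\bU,u)$.
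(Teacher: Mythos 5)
Your proof is correct and is exactly the argument the paper intends: the corollary is stated as an immediate consequence of Proposition~\ref{retract} (the coloring $u$ is a retraction, hence a surjective homomorphism, i.e.\ an epimorphism in the paper's terminology) combined with the preceding lemma that $\wAut(\bA,a)\cong\cAut(\bA,a)$ whenever $a$ is an epimorphism. Nothing is missing; your verification that a retraction is surjective and that $\pi_1$ is injective-plus-surjective onto its image is precisely the intended reading.
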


In the following it will be our goal to represent the color automorphism group of a universal homogeneous $\bT$-colored structure as the automorphism group of a relational structure. 
First, let us expand the signature $R$ of $\bT$ by another binary relational symbol $\kappa$ and denote the new signature by $\tilde{R}$. For a $\bT$-colored structure $(\bA,a)$ define $\widetilde{\bA}$ to be the $\tilde{R}$ structure whose $R$-reduct is equal to $\bA$ and in which the relational symbol $\kappa$ is interpreted as the kernel of $a$.
\begin{proposition}\label{locally_closed}
	Suppose that $\bT\in\overline{\cC}$.
	Let $(\bU,u)$ be a universal homogeneous $\bT$-colored structure in $\overline{\cC}$. Then $\cAut(\bU,u)=\Aut(\widetilde{\bU})$ (in particular, $\cAut(\bU,u)$ is closed in $S_\infty$).
\end{proposition}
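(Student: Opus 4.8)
The plan is to prove $\cAut(\bU,u)=\Aut(\widetilde{\bU})$ by a double inclusion, where the nontrivial content lies entirely in the reverse inclusion and rests on the w-homogeneity of $(\bU,u)$ furnished by Theorem~\ref{hom-constraints2}.

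First I would establish the easy inclusion $\cAut(\bU,u)\subseteq\Aut(\widetilde{\bU})$. Recall that $\widetilde{\bU}$ is the $\tilde R$-structure whose $R$-reduct is $\bU$ and in which $\kappa^{\widetilde{\bU}}$ is the kernel of $u$, i.e.\ $\kappa^{\widetilde{\bU}}=\{(x,y)\mid u(x)=u(y)\}$. Let $v\in\cAut(\bU,u)$; by definition of the color automorphism group there is $w\in\Aut(\bT)$ with $(v,w)\in\wAut(\bU,u)$, so $u\circ v=w\circ u$. Since $v\in\Aut(\bU)$ it already preserves and reflects all $R$-relations, so I only need that $v$ preserves and reflects $\kappa$. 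But $u(x)=u(y)$ holds if and only if $w(u(x))=w(u(y))$ (as $w$ is a bijection), which by $u\circ v=w\circ u$ is equivalent to $u(v(x))=u(v(y))$; hence $(x,y)\in\kappa^{\widetilde{\bU}}\iff(v(x),v(y))\in\kappa^{\widetilde{\bU}}$, so $v\in\Aut(\widetilde{\bU})$.

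For the reverse inclusion $\Aut(\widetilde{\bU})\subseteq\cAut(\bU,u)$, take $v\in\Aut(\widetilde{\bU})$. Then $v$ is an $R$-automorphism of $\bU$ that preserves $\ker u$, so the assignment $w_0:u(x)\mapsto u(v(x))$ is a well-defined bijection on $u(U)$; the task is to show $v$ is a \emph{color} automorphism, i.e.\ that $w_0$ extends to some $w\in\Aut(\bT)$ with $u\circ v=w\circ u$, which exhibits $(v,w)\in\wAut(\bU,u)$ and hence $v\in\cAut(\bU,u)$. The key structural input is Proposition~\ref{retract}: since $\bT\in\overline{\cC}$, the map $u$ is a retraction, so $u(U)=T$ and $u$ is surjective; this makes $w_0$ a bijection of $T$ onto itself and lets $\widetilde{\bU}$ genuinely record the fibre structure of $u$. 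It remains to upgrade this set-bijection $w_0$ to a structural automorphism of $\bT$, and to do so compatibly for arbitrary (not necessarily finite) $v$.

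The hard part will be producing the automorphism $w$ of $\bT$, and here I expect to invoke w-homogeneity together with the fact (Proposition~\ref{retract}, and its corollary) that $u$ admits a section $\iota:(\bT,1_\bT)\injto(\bU,u)$ with $u\circ\iota=1_\bT$. Using this section one transports $w_0$ back to $\bT$: concretely, define $w:=u\circ v\circ\iota$, a self-map of $\bT$. Because $\iota$ is an embedding into the universal homogeneous object and $v$ is a $\kappa$-preserving automorphism, one checks $w$ is a homomorphism of $\bT$; the genuine obstacle is showing $w$ is bijective with homomorphic inverse. I would handle this by applying the same argument to $v^{-1}$ (which also lies in $\Aut(\widetilde{\bU})$, since $\kappa$ is symmetric and $v$ reflects it) to obtain a homomorphism $w'=u\circ v^{-1}\circ\iota$, and then verify $w\circ w'=w'\circ w=1_\bT$ by chasing the identities $u\circ v=w\circ u$, $u\circ\iota=1_\bT$, and the fact that two sections of the retraction $u$ agreeing modulo $\ker u$ induce the same map on $T$. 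Care is needed because $\iota$ need not be $\sAut$-canonical, so the verification that $u\circ v\circ\iota$ is independent of the chosen representative of each fibre (i.e.\ that $v$ respects $\ker u$) is exactly where the hypothesis $v\in\Aut(\widetilde{\bU})$ is consumed. Once $w\in\Aut(\bT)$ is in hand with $u\circ v=w\circ u$, we conclude $(v,w)\in\wAut(\bU,u)$ and thus $v\in\cAut(\bU,u)$, completing both inclusions; the parenthetical closedness of $\cAut(\bU,u)$ in $S_\infty$ is then immediate, since $\Aut(\widetilde{\bU})$ is the automorphism group of a relational structure and such groups are always closed subgroups of the full symmetric group.
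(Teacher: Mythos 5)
Your proof is correct, but it is organized differently from the paper's. Both arguments construct the same map of $T$ out of a given $v\in\Aut(\widetilde{\bU})$ --- namely $u(x)\mapsto u(v(x))$ --- and both ultimately rest on the same structural input, the section $\iota:\bT\injto\bU$ with $u\circ\iota=1_\bT$ from Proposition~\ref{retract}. The difference is where that section is used. The paper first passes to a richer expansion $\widehat{\bU}$ of $\bU$, carrying not only $\kappa=\ker u$ but also a relation $\hat{\varrho}_\bU=u^{-1}(\varrho_\bT)$ for every $\varrho\in R$; it proves $\cAut(\bU,u)=\Aut(\widehat{\bU})$, where preservation of the relations $\hat{\varrho}_\bU$ is exactly what makes the induced map on $T$ a homomorphism; and only then does it invoke $\iota$, to show that each $\hat{\varrho}_\bU$ is (positively existentially) definable in $\widetilde{\bU}$, whence $\Aut(\widehat{\bU})=\Aut(\widetilde{\bU})$. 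You bypass $\widehat{\bU}$ entirely: by setting $w:=u\circ v\circ\iota$ you choose in each fibre of $u$ the representative supplied by $\iota$, which lies in $\varrho_\bU$ itself rather than merely in $u^{-1}(\varrho_\bT)$; then $w$ is a composite of homomorphisms, so the homomorphism property is automatic, bijectivity comes from the explicit inverse $w'=u\circ v^{-1}\circ\iota$, and the hypothesis that $v$ preserves $\kappa$ is consumed exactly once, in the identity $w\circ u=u\circ v$ (your ``independence of the representative of the fibre''). Your route is shorter and avoids the definability lemma; the paper's route yields as a byproduct the slightly stronger fact that the relations $u^{-1}(\varrho_\bT)$ are first-order definable in $\widetilde{\bU}$. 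One correction to your framing: w-homogeneity (Theorem~\ref{hom-constraints2}) is not needed, and indeed your argument never actually uses it --- the only consequence of universality that enters is Proposition~\ref{retract}.
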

\begin{proof}
	Let $\kappa$ be the kernel of $u$. For every $\varrho\in R$ we introduce a new symbol $\hat{\varrho}$ and define $\hat{\varrho}_\bU:=\{\ba\mid u(\ba)\in\varrho_\bT\}$. Let $\widehat{\bU}$ be the structure obtained from $\bU$ by expansion through $\kappa$ and through all $\hat{\varrho}_\bU$. We claim that $\Aut(\widehat{\bU})=\cAut(\bU,u)$.   
	
	Let $f\in\cAut(\bU,u)$. Then there exists a $g\in\Aut(\bT)$ such that $g\circ u= u\circ f$. If $(a,b)\in\kappa$ then $(a,b)\in\ker(g\circ u)$ hence $(a,b)\in\ker(u\circ f)$ whence $(f(a),f(b))\in\kappa$. Let $\ba\in\hat{\varrho}_\bU$. Then $g(u(\ba))\in \varrho_\bT$. Hence $u(f(\ba))\in\varrho_\bT$, whence $f(\ba)\in\hat{\varrho}_\bA$. We conclude that $f\in\Aut(\widehat{\bU})$. 
	
	Let now $f\in\Aut(\widehat{\bU})$. Let us define a binary relation $\gamma$ on $U$ by $\gamma:=\{(u(a),u(f(a)))\mid a\in U\}$. We claim that $\gamma$ is the graph of an automorphism of $\bT$. 
	If $(x,y_1),(x,y_2)\in\gamma$ then there exists $a_1,a_2\in U$ such that $u(a_1)=u(a_2)=x$ and $u(f(a_1))=y_1$, $u(f(a_2))=y_2$. But then $(a_1,a_2)\in \kappa$ and since $f$ preserves $\kappa$, we get $y_1=y_2$. Since $u$ is surjective (recall that it is a retraction of $\bU$ onto $\bT$), it follows that $\gamma$ is the graph of a function. Suppose now that $(x_1,y),(x_2,y)\in\gamma$. Then there are $a_1,a_2\in U$ such that $u(a_1)=x_1$, $u(a_2)=x_2$, $u(f(a_1))=u(f(a_2))=y$. Hence $(f(a_1),f(a_2))\in\kappa$. Since $f^{-1}$ is an automorphism of $\widehat{\bU}$, we have  $(a_1,a_2)\in\kappa$ whence $x_1=x_2$. It follows that $\gamma$ is the graph of an injective function. Since $f$ is a bijection, it follows that $\gamma$ is the graph of a bijective function. Let $g$ be this function. Clearly, by construction we have $g\circ u=u\circ f$.
	
	It remains to show that $g$ is an automorphism of $\bT$. Let $\varrho\in R^{(n)}$ and $\bc\in\varrho_\bT$. Let $\ba\in U^n$ such that $u(\ba)=\bc$. Then $\ba\in\hat{\varrho}_\bU$. Hence $f(\ba)\in\hat{\varrho}_\bU$ and $u(f(\ba))\in\varrho_\bT$. By construction of $g$ we have that $u(f(\ba))=g(\bc)$. Hence $g\in\Aut(\bT)$. We conclude that $f\in\cAut(\bU,u)$.
	
	Finally, we have to show that the $\tilde{R}$-reduct $\widetilde{\bU}$ of $\widehat{\bU}$ has the same automorphism group like $\widehat{\bU}$. It is enough to show that the relations $\hat{\varrho}_\bU$ are first order definable in $\widetilde{\bU}$. For $\varrho\in R^{(n)}$ consider the formula
	\[
		\varphi_\varrho(x_1,\dots,x_n) \equiv \exists x_{n+1}\dots\exists x_{2n} \bigwedge_{i=1}^n \kappa(x_i,x_{i+n}) \land \varrho(x_{n+1},\dots x_{2n}).
	\]
	Let $\varphi_{\varrho,\widetilde{\bU}}$ be the interpretation of $\varphi_\varrho$ in $\widetilde{\bU}$. We claim that $\varphi_{\varrho,\widetilde{\bU}}=\hat\varrho_\bU$. Clearly, $\varphi_{\varrho,\widetilde{\bU}}\subseteq\hat\varrho_\bU$. For the other inclusion we use that $u$ is a retraction onto $\bT$. Let $\iota:\bT\injto\bU$ be a co-retraction of $u$. Let $\ba\in \hat{\varrho}_\bU$. Let $\bb:=\iota(u(\ba))$. Then $\bb\in\varrho_\bU$, and $u(\bb)=u(\ba)$. Hence for all $1\le i\le n$ we have that $(a_i,b_i)\in\kappa_\bU$. Hence $\ba\in \varphi_{\varrho,\widetilde{\bU}}$.  
\end{proof}

\section{A class of $\bT$-colored structures with the small index property}\label{s5}

Following \cite{DolMas12,KecRos07}, we say that a \Fraisse-class $\cC$ has the \emph{Hrushovski property} if for every $\bA\in\cC$ there exists a $\bB\in\cC$ such that $\bA\le\bB$ and such that every isomorphism between substructures of $\bA$ extends to an automorphism of $\bB$. 
We say that a homogeneous structure $\bF$ has the Hrushovski property  if $\Age(\bF)$ does.

As the name suggests, the Hrushovski property was first proved for the class of finite simple graphs by Hrushovski in \cite{Hru92}. 

Hodges, Hodkinson, Lascar, and Shelah showed how the Hrushovski property is related to the small index property: Let $G$ be a permutation group acting on a countable set. Then $G$ is said to have the \emph{small index property} if every subgroup of index less than $2^{\aleph_0}$ contains the stabilizer of a finite tuple. A structure $\bA$ is said to have the small index property if its automorphism group does. Herwig, in \cite{Her98}, generalizing Hrushovski's ideas,  gave a sufficient condition for relational structures over finite signatures to have the Hrushovski property and the small index property. 
Let us now reproduce some important notions from \cite{Her98}.  

Let $R$ be a relational signature. A finite $R$-structure $\bA$ is called a \emph{link-structure}, if either $|A|=1$ or there exist $a_1,\dots,a_n\in A$ such that $A=\{a_1,\dots,a_n\}$ and for some $\varrho\in R^{(n)}$ we have $(a_1,\dots,a_n)\in\varrho_{\bA}$ (note that the $a_i$ do not need to be distinct). 

If $\cL$ is a set of link-structures, then we say that a structure $\bA$ has \emph{link type} $\cL$ if every substructure of $\bA$ that is a link structure, is isomorphic to some structure from $\cL$. 

Let $\cF$ be a set of finite $R$-structures. Then an $R$-structure $\bA$ is called \emph{$\cF$-free}, if no member of $\cF$ homomorphically maps to $\bA$. 

If $\cL$  is a set of link-structures and $\cF$ is a set of finite $R$-structures, then by $\KLF$ denotes the class of all finite $\cF$-free $R$-structures of link-type $\cL$. 

A finite $R$ structure $\bA$ is called \emph{packed} if any two distinct elements of $A$ lie in a tuple of some basic relation of $\bA$ (in other words, the Gaifman-graph of $\bA$ is the complete graph). 

Now we are ready to formulate Herwig's criterion:
\begin{theorem}[{\cite[Thm.15]{Her98}}]\label{herwig}
Let $R$ be a finite relational signature. Let $\cF$ be a set of finite packed $R$-structures. Let $\cL$ be a set of link-structures. Then $\KLF$ is a free amalgamation class that has the Hrushovski property. Moreover, the automorphism group of the \Fraisse-limit of $\KLF$  has ample generics and the small index property.\qed
\end{theorem}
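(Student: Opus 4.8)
The plan is to verify the three assertions in turn: that $\KLF$ is a free amalgamation class, that it enjoys the Hrushovski property, and that the two together feed the standard descriptive-set-theoretic machinery to yield ample generics and the small index property. For the first assertion, the hereditary property is immediate: both ``$\cF$-free'' and ``link-type $\cL$'' pass to substructures, since a homomorphism from some $\bF\in\cF$ into a substructure composes with the inclusion to give a homomorphism into the ambient structure, and any link-substructure sitting inside a substructure already sits inside the ambient one. The empty structure serves as an initial object (it is vacuously $\cF$-free and of link-type $\cL$), so the joint embedding property will follow once amalgamation is in hand. The substance is therefore to show that the \emph{free amalgam} $\bC$ of $\bB_1,\bB_2$ over a common substructure $\bA$ again lies in $\KLF$.

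The decisive structural observation is that in a free amalgam no basic tuple \emph{straddles} the two sides: every tuple of a basic relation of $\bC$ lies wholly inside $\bB_1$ or wholly inside $\bB_2$. From this, link-type is immediate, since any link-substructure of $\bC$ is determined by a single basic tuple (or a point) and hence is already a link-substructure of $\bB_1$ or of $\bB_2$. Preservation of $\cF$-freeness is exactly where packedness enters. Suppose $h\colon\bF\to\bC$ were a homomorphism with $\bF\in\cF$. Since $\bF$ is packed, its Gaifman-graph is complete, so any two elements of $\bF$ occur together in some basic tuple; the $h$-image of that tuple lies on a single side, which forces any two image points lying outside $\bA$ onto the \emph{same} side. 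Hence the whole image of $h$ lands in $\bB_1$ or in $\bB_2$, giving a homomorphism $\bF\to\bB_i$ and contradicting $\cF$-freeness of $\bB_i$. Thus $\bC\in\KLF$, and $\KLF$ is a free amalgamation class, in particular a \Fraisse-class.

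The hard part will be the Hrushovski property: given $\bA\in\KLF$ one must produce a \emph{finite} $\bB\in\KLF$ with $\bA\le\bB$ in which every isomorphism between substructures of $\bA$ extends to an automorphism of $\bB$. I would follow the extension-of-partial-automorphisms strategy. Collect the finite set $P$ of partial isomorphisms of $\bA$, pass to the free group on symbols for the elements of $P$, and build a structure acted on by this group in which each $p\in P$ is realised as a restriction of a generator. The genuine difficulty is to descend from this (a priori infinite, freely generated) object to a finite realisation that is simultaneously $\cF$-free and of link-type $\cL$; here I would invoke the profinite-topology method (the Ribes--Zalesskii theorem, as in the Herwig--Lascar approach) to extract a suitable finite quotient. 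Packedness is again the key: because a forbidden homomorphic image of $\bF\in\cF$ is \emph{tight} (complete Gaifman-graph), it is geometrically local, and locality is precisely what lets one avoid it in the finite quotient. Controlling $\cF$-freeness and the link-type at the same time as extending \emph{all} of $P$ is the technical core of the whole theorem, and the step I expect to be the main obstacle.

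Finally, with $\KLF$ a free amalgamation \Fraisse-class possessing the Hrushovski property, I would invoke the now-standard transfer results. Free amalgamation provides a canonical (functorial) amalgamation, while the Hrushovski property upgrades—by iterating the extension to coherent systems of partial automorphisms—to the full input required by the Kechris--Rosendal criterion; this produces a comeagre conjugacy class in every finite power of the automorphism group of the \Fraisse-limit of $\KLF$, i.e.\ ample generics. The small index property then follows from ample generics by the Hodges--Hodkinson--Lascar--Shelah argument. In summary, I expect the first two paragraphs to be routine once the no-straddling observation is recorded, with essentially all of the real work concentrated in the finite, $\cF$-free, link-type-preserving realisation of partial automorphisms.
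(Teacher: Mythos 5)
Note first that the paper contains no proof of this statement at all: it is Herwig's Theorem~15, imported verbatim from \cite{Her98} (hence the tombstone directly after the statement), and it is only ever used as a black box, namely in the proof of Proposition~\ref{extherwig}. So the comparison can only be against Herwig's proof in the literature, and measured that way your proposal has a genuine gap. Your first two paragraphs are correct and are indeed the routine part: heredity, joint embedding via the empty structure, and the no-straddling observation showing that free amalgams preserve link type and (via packedness of the members of $\cF$) preserve $\cF$-freeness. But the entire content of the theorem is the Hrushovski property, and there you do not give an argument: you describe the general shape of an extension-of-partial-automorphisms strategy and then state that the decisive step---producing a \emph{finite}, $\cF$-free extension of link type $\cL$ realizing all partial isomorphisms of $\bA$---is ``the main obstacle.'' Naming the obstacle is not overcoming it; as written, the core of the theorem is assumed rather than proved.

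Moreover, the tool you defer to would not close the gap even if invoked. The profinite-topology/Ribes--Zalesskii method (the Herwig--Lascar approach) is designed for classes of the form $\Forb_h(\cF)$ with $\cF$ a \emph{finite} set of forbidden structures, whereas here $\cF$ is an arbitrary, possibly infinite, set of packed structures, and one must simultaneously respect the link type $\cL$; it also postdates the theorem in question, whose original proof in \cite{Her98} is a direct combinatorial construction in which packedness (tightness of forbidden configurations) is exploited to cut an EPPA-extension for arbitrary finite $R$-structures down into the class $\KLF$---a different use of packedness from the one in your second paragraph. Finally, your last step is also too quick: ample generics do not follow from EPPA alone. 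By the Kechris--Rosendal criterion \cite{KecRos07} one additionally needs joint embedding and amalgamation for the class of finite \emph{systems} of partial automorphisms; this is where free amalgamation is used a second time, and it is exactly the content of Prop.~8 and Thm.~11 of \cite{Her98}, which the paper itself cites for this purpose in the proof of Proposition~\ref{extherwig}.
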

We are not going to define the notion of ample generics but only remark that this property (originating from ideas from \cite{Las91,Tru92}, cf. also \cite{Tru07}) is central to showing the small index property of a permutation group.

Note that the classes of the shape $\cK_{\emptyset\cF}$ are precisely the so called monotone free amalgamation classes (cf. \cite[Lem.14]{Her98}). Another way to put this is, that a CSP has the amalgamation property if and only if it can be represented like $\cK_{\emptyset\cF}$ for some set $\cF$ of finite packed structures.

We are not happy about the requirement that the relational signature has to be finite. In order to overcome this problem, we have to restrict the possible link structures. Let $R$ be an arbitrary relational signature. A finite $R$-structure $\bA$ is called \emph{sparse} if all but finitely many relational symbols from $R$ are interpreted by $\emptyset$ in $\bA$. An arbitrary $R$-structure is called sparse if all its finite substructures are sparse. For an $R$-structure $\bA$, by $R(\bA)$ we will denote the set of all relational symbols in $R$ that have a non-empty interpretation in $\bA$. 

Now we can formulate our criterion:
\begin{proposition}\label{extherwig}
	Let $R$ be a relational signature. Let $\cF$ be a set of finite packed $R$-structures. Let $\cL$ be a set of sparse link-structures. Then $\KLF$ has the Hrushovski property. If $\cL$ is countable, then $\KLF$ is a free amalgamation class whose \Fraisse-limit has the small index property. 
\end{proposition}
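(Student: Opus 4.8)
The plan is to reduce everything to Herwig's criterion (Theorem~\ref{herwig}) through a two-fold use of sparseness: first to confine $\KLF$ to a countable subsignature, and then, \emph{locally}, to finite subsignatures where Theorem~\ref{herwig} applies verbatim.

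\textbf{Step 1 (sparseness and countability of the age).} First I would observe that every finite $\bA\in\KLF$ is itself sparse. Indeed, if $\varrho\in R(\bA)$ then some tuple $\ba$ lies in $\varrho_\bA$, and the substructure of $\bA$ induced on the entries of $\ba$ is a link-substructure $\bC$ with $\varrho\in R(\bC)$; hence $R(\bA)=\bigcup_\bC R(\bC)$, the union ranging over the link-substructures $\bC$ of $\bA$. As $\bA$ is finite it has at most $2^{|A|}$ induced substructures, each link-substructure is isomorphic to some sparse $\bL\in\cL$ and so has finite $R(\bC)$, and therefore $R(\bA)$ is finite. Setting $R':=\bigcup_{\bL\in\cL}R(\bL)$ one gets $R(\bA)\subseteq R'$ for all $\bA\in\KLF$, since every symbol witnessed in $\bA$ is witnessed in a link-substructure and hence in some $\bL$. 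When $\cL$ is countable, $R'$ is countable, and as the members of $\KLF$ are sparse $R'$-structures there are only countably many of them up to isomorphism; this secures the countability hypothesis of the \Fraisse{} machinery.

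\textbf{Step 2 (Hrushovski property).} Fix a finite $\bA\in\KLF$ and put $R_\bA:=R(\bA)$, finite by Step~1. Let $\cF_\bA:=\{\bF\in\cF\mid R(\bF)\subseteq R_\bA\}$ and $\cL_\bA:=\{\bL\in\cL\mid R(\bL)\subseteq R_\bA\}$, viewed as $R_\bA$-structures; the former are finite packed $R_\bA$-structures, the latter link-structures. A routine check gives $\bA\in\cK_{\cL_\bA\cF_\bA}$ (the analogous class over $R_\bA$): its link-substructures use only symbols of $R_\bA$ and are isomorphic to members of $\cL_\bA$, and $\cF_\bA$-freeness follows from $\cF$-freeness. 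Theorem~\ref{herwig}, applied over the finite signature $R_\bA$, yields $\bB\in\cK_{\cL_\bA\cF_\bA}$ with $\bA\le\bB$ such that every isomorphism between substructures of $\bA$ extends to an automorphism of $\bB$. Reinterpreting $\bB$ as an $R$-structure (all symbols outside $R_\bA$ empty), I would check $\bB\in\KLF$: its link-type is still $\cL$ because $\cL_\bA\subseteq\cL$ and $\bB$ uses only $R_\bA$; and it is $\cF$-free because any $\bF\in\cF$ mapping homomorphically into $\bB$ cannot use a symbol outside $R_\bA$ (such a relation is empty in $\bB$), so $\bF\in\cF_\bA$, which is excluded. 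Since automorphisms of the $R_\bA$-structure $\bB$ are automorphisms of the $R$-structure $\bB$, and substructure-isomorphisms of $\bA$ do not see symbols outside $R_\bA$, the extension property transfers. This proves the Hrushovski property, with no countability hypothesis.

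\textbf{Step 3 (free amalgamation class).} Assuming $\cL$ countable, I would verify directly that $\KLF$ is closed under free amalgams. The hereditary property is clear. Given embeddings $\bA\injto\bB_1$, $\bA\injto\bB_2$ in $\KLF$, form the free amalgam $\bC$ (disjoint union over $A$, no tuples beyond those of $\bB_1$ and $\bB_2$). Every relational tuple of $\bC$ lies entirely in $B_1$ or entirely in $B_2$, so each link-substructure of $\bC$ is an induced substructure of $\bB_1$ or of $\bB_2$, hence isomorphic to a member of $\cL$; thus $\bC$ has link-type $\cL$. For $\cF$-freeness, if $h\colon\bF\to\bC$ were a homomorphism with $\bF\in\cF$, then \emph{packedness} of $\bF$ forbids $h(F)$ from meeting both $B_1\setminus A$ and $B_2\setminus A$ (two such points are joined in the complete Gaifman-graph of $\bF$ and would produce a mixed tuple, impossible in a free amalgam); hence $h$ factors through $\bB_1$ or $\bB_2$, contradicting their $\cF$-freeness. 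Taking $\bA$ initial gives the joint embedding property via disjoint unions. With Step~1 this makes $\KLF$ a \Fraisse{} class closed under free amalgamation, so it has a \Fraisse-limit $\bF$.

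\textbf{Step 4 (small index property).} This is the step I expect to be the main obstacle, because Theorem~\ref{herwig} delivers the small index property only for \emph{finite} signatures, while the reduct description $\Aut(\bF)=\bigcap_n\Aut(\bF\restriction R_n)$ over a cofinal chain of finite $R_n\subseteq R'$ does not transmit the property through the intersection. My approach would be to show instead that $\bF$ has \emph{ample generic automorphisms} and then quote that ample generics imply the small index property \cite{KecRos07,Her98}. The two ingredients feeding the Kechris--Rosendal criterion are exactly the ones already produced: the Hrushovski extension property of Step~2, and the joint embedding and weak amalgamation of systems of partial automorphisms supplied by the free amalgamation of Step~3. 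Crucially, neither of these uses finiteness of the signature---only the countability of $R'$ secured in Step~1. The delicate part to write out is upgrading the plain Hrushovski property to the \emph{coherent} form needed for tuples of partial automorphisms and checking that free amalgams realise the requisite amalgamation for the associated classes of automorphism-systems over $R'$; once this is in place, the small index property (and, as noted in the introduction, the Bergman property and uncountable cofinality) follows.
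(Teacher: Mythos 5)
Your proposal is correct and follows essentially the same route as the paper: restrict a given $\bA\in\KLF$ to its finite witnessed signature $R(\bA)$ (using sparseness, which as you note is inherited from the sparse link-type), apply Theorem~\ref{herwig} there and lift the resulting extension back to $\KLF$; then use countability of $\cL$ for countably many isomorphism types, free amalgamation, and the implication ``Hrushovski property $+$ free amalgamation class $\Rightarrow$ ample generics $\Rightarrow$ small index property.'' The ``delicate part'' you flag in Step~4 is not actually an obstacle: the Hrushovski property as defined already extends \emph{all} partial isomorphisms of $\bA$ simultaneously (so no coherence upgrade is needed), and the paper closes this step simply by citing Herwig's general results \cite[Prop.8, Thm.11]{Her98}, which do not require a finite signature.
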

\begin{proof}
	Let $\bA\in\KLF$, and let $p_1,\dots,p_n$ be  isomorphisms between substructures of $\bA$. Let $R':=R(\bA)$. Since $\bA$ is sparse, it follows that $R'$ is finite. Let $\cL'$ be the subclass of of all link-structures $\bL\in\cL$ for which $R(\bL)\subseteq R'$. Similarly, let $\cF'$ be the subclass of all structure $\bK\in\cF$ for which $R(\bK)\subseteq R'$. Then $\cK_{\cL'\cF'}\subseteq \KLF$ and $\bA\in\cK_{\cL'\cF'}$. By Theorem~\ref{herwig}, it follows that $\cK_{\cL'\cF'}$ has the Hrushovski property. Hence, there is a finite superstructure $\bB$ of $\bA$ in $\cK_{\cL'\cF'}$ such that $p_1,\dots,p_n$ extend to automorphisms of $\bB$. But we have that $\bB$ is also an element of $\KLF$. Hence $\KLF$ has the Hrushovski property, too.
	
	Clearly, $\KLF$ has the hereditary property and it has free amalgams. However, in order to form a \Fraisse-limit, we need in addition, that it contains up to isomorphism just countably many structures. However, this is assured by the requirement, that $\cL$ should be countable. In this case, from the Hrushovski property of $\KLF$ and from the fact that $\KLF$ is a free amalgamation class, it follows that the automorphism group of the \Fraisse-limit of $\KLF$ has ample generics and the small index property (cf. \cite[Prop.8,Thm.11]{Her98}).
\end{proof}

The Hrushovski property can be straight forwardly defined also for \Fraisse-classes of $\bT$-colored structures with respect to strong embeddings: We say that $\Col_\cC(\bT)$  the \emph{Hrushovski property} if for every finite $(\bA,a)\in\Col_\cC(\bT)$ there exists a finite $(\bB,b)\in\Col_\cC(\bT)$ such that $(\bA,a)\le(\bB,b)$ and such that every strong isomorphism between substructures of $(\bA,a)$ extends to a strong automorphism of $(\bB,b)$. Our main result in this section will be:
\begin{theorem}\label{SIP}
	Let $R$ be a relational signature. Let $\cF$ be a set of finite packed $R$-structures and let $\cL$ be a countable set of sparse link-structures. Let  $\bT$  be any countable $R$-structure. Then $\Col_{\KLF}(\bT)$ has the Hrushovski property. If $(\bU,u)$ is a universal homogeneous object in $\Col_\cC(\bT)$, then $\sAut(\bU,u)$ has the small index property.
\end{theorem}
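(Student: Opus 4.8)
The plan is to reduce both assertions to the purely relational criterion of Proposition~\ref{extherwig} by encoding the $\bT$-coloring through unary predicates. I would expand $R$ to $R^{*}:=R\cup\{P_t\mid t\in T\}$ with every $P_t$ unary, and associate to each $\bT$-colored structure $(\bA,a)$ the $R^{*}$-structure $\bA^{*}$ whose $R$-reduct is $\bA$ and in which $P_t^{\bA^{*}}:=a^{-1}(t)$. This translation is faithful for all the relevant notions: an $R^{*}$-embedding $\bA^{*}\injto\bB^{*}$ is exactly a strong embedding $(\bA,a)\injto(\bB,b)$ (preservation of the $P_t$ is the condition $b\circ f=a$, and reflection is automatic because $a$ is total), so strong isomorphisms between substructures correspond to $R^{*}$-isomorphisms and $\sAut(\bA,a)=\Aut(\bA^{*})$.

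Next I would realize $\{\bA^{*}\mid(\bA,a)\in\Col_{\KLF}(\bT)\text{ finite}\}$ as a class of the form $\cK_{\cL'\cF'}$. For the forbidden family I take $\cF':=\{\bK^{+}\mid\bK\in\cF\}$, where $\bK^{+}$ is $\bK$ read as an $R^{*}$-structure with all $P_t$ empty; then $\cF'$-freeness of $\bA^{*}$ is precisely $\cF$-freeness of $\bA$, and $\bK^{+}$ is packed since its Gaifman graph is that of $\bK$. For the link family I take $\cL':=\{\bL^{c}\mid\bL\in\cL,\ c\colon\bL\to\bT\text{ a homomorphism}\}$, where $\bL^{c}$ colors $\bL$ according to $c$. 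Each $\bL^{c}$ is a link-structure over $R^{*}$, is sparse because $\bL$ is sparse and $c$ has finite image, and there are only countably many of them because $\cL$ and $T$ are countable; so $\cL'$ is a countable set of sparse link-structures. A verification of both inclusions gives $\{\bA^{*}\}=\cK_{\cL'\cF'}$: requiring link-type $\cL'$ forces the coloring of any member to be total and functional (every one-element member of $\cL'$ carries exactly one color) and forces its $R$-reduct to have link-type $\cL$, while restricting $a$ to any link-structure of $\bA$ yields a homomorphism into $\bT$, so $\bA^{*}$ indeed has link-type $\cL'$. Crucially, the homomorphism property of the decoded coloring is then automatic: every relational tuple spans a link-structure, and since every member of $\cL'$ is colored by a homomorphism into $\bT$, the colors along any tuple in a structure of link-type $\cL'$ lie in the corresponding relation of $\bT$; hence no further forbidden structures are needed.

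With this dictionary the two conclusions transport directly. By Proposition~\ref{extherwig} (packed $\cF'$, countable sparse $\cL'$) the class $\cK_{\cL'\cF'}$ has the Hrushovski property; pulling the witnessing superstructure and its automorphisms back through the $*$-correspondence yields the Hrushovski property for $\Col_{\KLF}(\bT)$. For the small index property I would check that, for a universal homogeneous $(\bU,u)$, the structure $\bU^{*}$ is countable, has age $\cK_{\cL'\cF'}$ (universality of $(\bU,u)$ embeds every member, and every finite substructure of $\bU^{*}$ is such a member), and is homogeneous (homogeneity of $(\bU,u)$ transports to $R^{*}$-isomorphisms); hence $\bU^{*}$ is the \Fraisse-limit of $\cK_{\cL'\cF'}$. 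By Proposition~\ref{extherwig} this limit has the small index property, and since $\sAut(\bU,u)=\Aut(\bU^{*})$, we are done.

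I expect the main difficulty to be the bookkeeping of the encoding---confirming that each defining constraint of a $\bT$-colored structure (totality and functionality of $a$, the homomorphism condition, $\cF$-freeness, and link-type $\cL$) is faithfully captured by $\cL'$ or $\cF'$ while packedness and sparseness are preserved. The one genuinely delicate observation is that the homomorphism condition can be pushed entirely into the link-type, which works precisely because a single relational tuple already spans a link-structure and because link-colorings are required to be homomorphisms into $\bT$.
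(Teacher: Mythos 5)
Your proposal is correct and follows essentially the same route as the paper's proof: the paper also encodes $\bT$-colored structures via unary predicates $M_t$ (your $P_t$), takes the link family to be all homomorphically $\bT$-colored members of $\cL$ and the forbidden family to be $\cF$ with the new predicates empty, observes that totality/functionality of the coloring and the homomorphism condition are forced by the link-type (since every relational tuple spans a link-structure), and then applies Proposition~\ref{extherwig} together with the identification $\sAut(\bU,u)=\Aut(S(\bU,u))$. The only cosmetic difference is that the paper phrases the dictionary as a concrete isomorphism of categories $(\Col_{\KLF}(\bT),\injto)\cong(\overline{\cK_{\widehat\cL\cF}},\injto)$ at the level of all countable structures, whereas you argue separately for the finite level (Hrushovski) and for $\bU^*$ being the \Fraisse-limit (small index property); both amount to the same verification.
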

\begin{proof}
The first step will be to encode $\bT$-colored structures  as relational structures over an extended signature $\hat{R}$:  For every $t\in T$ we add a new unary relation-symbol $M_t$ to $R$, obtaining a new signature $\hat{R}$. Now, to every $\bT$-colored structure $(\bA,a)$ we associate an $\hat{R}$-structure $S(\bA,a)$ by setting $M_{t,S(\bA,a)}:=a^{-1}(t)$. Clearly this process is one-to-one. That is, we can reconstruct $(\bA,a)$ from $S(\bA,a)$. Moreover, $f:(\bA,a)\injto(\bB,b)$ if and only if $f:S(\bA,a)\injto S(\bB,b)$.

Of course, not all $\hat{R}$-structures can be obtained in this way. Let $\cL':=\{(\bL,l)\mid \bL\in\cL,\,l:\bL\to\bT\}$ and let  $\widehat\cL:=\{S(\bL,l)\mid (\bL,l)\in\cL'\}$.  
We can consider $\cF$ as a set of $\hat{R}$ structures in a natural way, by interpreting all additional relational symbols with the empty set.
We will show now that $S$ induces a concrete isomorphism between $(\Col_\KLF(\bT),\injto)$ and $(\overline{\cK_{\widehat\cL\cF}},\injto)$. Let $(\bA,a)\in\Col_\KLF(\bT)$. Then every link-structure of $\bA$ induces a $\bT$-colored substructure that is in $\cL'$. Hence it induces a structure from $\widehat\cL$ in $S(\bA,a)$. Moreover, if $\bF\in\cF$, and if $h:\bF\to S(\bA,a)$, then $h:\bF\to \bA$---a contradiction. Hence $S(\bA,a)\in  \overline{\cK_{\widehat\cL\cF}}$. It remains to show that $S$ is surjective. Let $\widehat\bB\in\overline{\cK_{\widehat\cL\cF}}$, and let $\bB$ be its $R$-reduct. Clearly, $\bB\in\overline{\KLF}$. Let $u\in \widehat\bB$. Then $u$ induces a link-structure from $\widehat\cL$. Hence there exists a unique $t_u\in T$ such that $u\in M_{t_u,\widehat\bB}$. Define $b:B\to T$ by $u\mapsto t_u$. We claim that $b:\bB\to\bT$. Suppose this was not the case. Then there exists a relational symbol $\varrho\in R^{(n)}$ and a tuple $\bu\in B^n$ such that $\bu\in\varrho_\bB$ but $b(\bu)\notin\varrho_\bT$. However, then the elements of $\bu$ induce a link-structure in $\widehat{\bB}$ that is not in $\widehat\cL$---contradiction. Thus $(\bB,b)$ is a $\bT$-colored structure in $\overline{\KLF}$. By construction we have $S(\bB,b)=\widehat{\bB}$. Moreover, all elements of $\widehat\cL$ are sparse. Also, since $\bT$ is countable, so is $\widehat\cL$. 

From Proposition~\ref{extherwig}, it follows that $\cK_{\widehat\cL\cF}$ has the Hrushovski property. Hence $\Col_\KLF(\bT)$ has the Hrushovski property. Moreover, the \Fraisse-limit of $\cK_{\widehat\cL\cF}$ has the small index property. Let $(\bU,u)$ be a universal homogeneous $\bT$-colored structure in $\overline\KLF$. Then $S(\bU,u)$ is a universal homogeneous structure in $\overline{\cK_{\widehat\cL\cF}}$. By the construction of $S$ we have $\sAut(\bU,u)=\Aut(S(\bU,u))$. Hence $\sAut(\bU,u)$ has the small index property.   
\end{proof}

\begin{remark}
	If we assume that $\bT$ is an element of $\KLF$, then we can make much stronger statements. From one hand we observe that in this case  there exists a finite tuple $\ba$ over $U$, such that the stabilizer of $\ba$ in $\cAut(\bU,u)$ is contained in $\sAut(\bU,u)$. Hence $\cAut(\bU,u)$ has the small index property, too.

	On the other hand, if $\bT\in\KLF$, then $\sAut(\bU,u)$ is oligomorphic. Moreover, as it was shown above, it is  closed in $S_\infty$ and has ample generics (we refer to \cite{KecRos07} for a definition of this notion). From this it follows that $\sAut(\bU,u)$ is not equal to the union of any $\omega$-chain of proper subgroups (i.e., it has uncountable cofinality, cf. the remark after Theorem~6.12 of \cite{KecRos07}). Moreover,  from \cite[Thm.6.19]{KecRos07} it follows that in this case $\sAut(\bU,u)$  is $21$-Bergman. That is, whenever $W_0\subseteq W_1 \subseteq W_2\subseteq\dots\subseteq \sAut(\bU,u)$ is an exhaustive sequence of of subsets of $\sAut(\bU,u)$, then there is an $n$, such that $W_n^{21}=\sAut(\bU,u)$. In particular, $\sAut(\bU,u)$ has the Bergman-property (cf. \cite{Ber06,DroGoe05}).
\end{remark}

\section{Universal homogeneous objects in comma categories}\label{s6}
In this Section we will provide the main tool needed for proving Theorems~\ref{hom-constraints} and \ref{hom-constraints2}. The exposition is more or less independent from the rest of the paper. The main result will be a theorem that gives sufficient conditions that a comma-category contains a universal homogeneous object. This result relies on an earlier categorical version of \Fraisse's theorem due to Droste and G\"obel  \cite{DroGoe92}.

For the convenience of the reader, this part is kept relatively self-contained. For basic notions from category theory we refer to \cite{HCA1}.

\subsection{Comma-categories}
Recall the definition of the comma categories:
\begin{definition}
   Let  $\fA$,$\fB$,$\fC$ be categories, let $F:\fA\to\fC$, $G:\fB\to\fC$ be functors. The arrow category $(F \downarrow G)$ has as objects triples $(A,f,B)$ where $A\in\fA$, $B\in\fB$, $f:FA\to GB$. The morphisms from $(A,f,B)$ to $(A',f',B')$ are pairs $(a,b)$ and such that $a:A\to A'$, $b:B\to B'$ such that the following diagram commutes:
\[
	\begin{psmatrix}
 		FA & GB\\
		FA' & GB'
		\ncline{->}{1,1}{1,2}^{f}
		\ncline{->}{2,1}{2,2}^{f'}
		\ncline{->}{1,1}{2,1}<{Fa}
		\ncline{->}{1,2}{2,2}<{Gb}
	\end{psmatrix}
\]
\end{definition}
There are two projection functors $U:(F \downarrow G)\to \fA$ and $V:(F \downarrow G)\to \fB$ defined by $U:(A,f,B)\mapsto A, (a,b)\mapsto a$ and $V:(A,f,B)\mapsto B, (a,b)\mapsto b$. Moreover there is a canonical natural transformation $\alpha: F\circ U\to G\circ V$ defined by $\alpha_{(A,f,B)}=f$ (cf. \cite[Prop.1.6.2]{HCA1}). Finally, the comma-category has the following universal property:
\begin{proposition}[{\cite[Prop.1.6.3]{HCA1}}]
	With the notions from above, let $\fD$ be another category, let $U':\fD\to\fA$, $V':\fD\to\fB$ be functors, and let $\alpha':FU'\Rightarrow GV'$ be a natural transformation. Then there exists a unique functor $W:\fD\to(F \downarrow G)$ such that $UW=U'$, $VW=V'$, $\alpha*1_W=\alpha'$ (where $\alpha*1_W$ denotes the horizontal composition of $\alpha$ and $1_W$, i.e. $\alpha'_D=\alpha_{WD}$ for all $D\in\fD$).\qed
\end{proposition}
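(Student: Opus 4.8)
The plan is to build $W$ by the only choice the three constraints permit, and then to check that this choice genuinely lands in $(F\downarrow G)$. On objects I would set $W(D):=(U'D,\alpha'_D,V'D)$; this is forced, since $UW=U'$ and $VW=V'$ dictate the outer two components while $\alpha*1_W=\alpha'$ dictates that the connecting arrow must be $\alpha_{WD}=\alpha'_D$. On a morphism $d:D\to D'$ of $\fD$ I would set $W(d):=(U'd,V'd)$, again forced by $UW=U'$ and $VW=V'$.

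The main point to check is that $W(d)$ is actually a morphism of the comma category, i.e.\ that the defining square commutes. For the pair $(U'd,V'd)$ viewed as an arrow $(U'D,\alpha'_D,V'D)\to(U'D',\alpha'_{D'},V'D')$, this square reads $G(V'd)\circ\alpha'_D=\alpha'_{D'}\circ F(U'd)$, which is exactly the naturality square of $\alpha':FU'\Rightarrow GV'$ at $d$. Thus the single hypothesis that $\alpha'$ is natural is precisely what makes $W$ well defined; this is the one place where the data interact, and I expect it to be the crux of the argument. Functoriality of $W$ is then routine: $W(1_D)=(U'1_D,V'1_D)=(1_{U'D},1_{V'D})$ is the identity of $W(D)$ because $U'$ and $V'$ preserve identities, and $W(d'\circ d)=(U'(d'\circ d),V'(d'\circ d))=(U'd',V'd')\circ(U'd,V'd)=W(d')\circ W(d)$ because $U'$, $V'$ are functors and composition in $(F\downarrow G)$ is componentwise.

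It remains to confirm the three identities and uniqueness. By construction $U(WD)=U'D$ and $U(Wd)=U'd$, so $UW=U'$, and symmetrically $VW=V'$; moreover $(\alpha*1_W)_D=\alpha_{WD}=\alpha'_D$, so $\alpha*1_W=\alpha'$. For uniqueness, suppose $\widetilde W$ also satisfies the three conditions. Writing $\widetilde W(D)=(A,f,B)$, the equations $U\widetilde W=U'$ and $V\widetilde W=V'$ force $A=U'D$ and $B=V'D$, while $\alpha*1_{\widetilde W}=\alpha'$ forces $f=\alpha_{\widetilde W D}=\alpha'_D$; hence $\widetilde W(D)=W(D)$. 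On a morphism $d$, writing $\widetilde W(d)=(a,b)$, the same two projection equations give $a=U'd$ and $b=V'd$, so $\widetilde W(d)=W(d)$. Therefore $\widetilde W=W$, completing the argument.
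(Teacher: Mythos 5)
Your proof is correct: the construction of $W$ is forced exactly as you say, the naturality of $\alpha'$ is indeed the one nontrivial point (it is precisely the commutativity of the defining square for $W(d)$), and the uniqueness argument is complete. Note that the paper itself gives no proof here---the proposition is quoted from Borceux's Handbook of Categorical Algebra (Prop.~1.6.3) with the \qed marking it as a cited result---and your argument is exactly the standard one that establishes it, so there is nothing to reconcile between the two.
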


The following lemmata show how arrow categories behave with respect to (weak) colimits. Nothing here is really new. But since it is not easy to find these facts in literature, we give them here with proof. 
\begin{lemma}\label{weak}
	With the notions from above, let $\fD$ be a small category, and let $H:\fD\to (F \downarrow G)$. Suppose that
	\begin{enumerate}
		\item $U\circ H$ has a weak colimit $(L,(p_D)_{D\in\fD})$,
		\item $V\circ H$ has a compatible cocone $(M,(q_D)_{D\in\fD})$,
		\item $(FL,(Fp_D)_{D\in\fD})$ is a weak colimit of $FUH$.
	\end{enumerate}
	Then there is a morphism $h: FL\to GM$ such that $((L,h,M), (p_D,q_D)_{D\in\fD})$ is a compatible cocone for $H$. In case that $(FL,(Fp_D)_{D\in\fD})$ is a colimit of $FUH$, $h$ is unique.
\end{lemma}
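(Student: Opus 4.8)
The plan is to reduce everything to the weak colimit hypothesis (3) by manufacturing an auxiliary cocone on $FUH$ whose apex is $GM$, and then reading off $h$ from it. To fix notation I would write $H(D)=(A_D,f_D,B_D)$, so that $A_D=UH(D)$, $B_D=VH(D)$, and $f_D=\alpha_{H(D)}\colon FA_D\to GB_D$; for a morphism $\delta\colon D\to D'$ in $\fD$ I set $H(\delta)=(a_\delta,b_\delta)$, which by the definition of $(F\downarrow G)$ satisfies the defining square $f_{D'}\circ Fa_\delta=Gb_\delta\circ f_D$. The cocones of hypotheses (1) and (2) are the families $p_D\colon A_D\to L$ and $q_D\colon B_D\to M$ with $p_{D'}\circ a_\delta=p_D$ and $q_{D'}\circ b_\delta=q_D$.

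First I would check that the family $(Gq_D\circ f_D)_{D\in\fD}$ is a cocone on $FUH$ with apex $GM$. For $\delta\colon D\to D'$ one computes
\[
(Gq_{D'}\circ f_{D'})\circ Fa_\delta
= Gq_{D'}\circ (Gb_\delta\circ f_D)
= G(q_{D'}\circ b_\delta)\circ f_D
= Gq_D\circ f_D,
\]
where the first equality is the comma-morphism identity and the last is the compatibility of $(M,(q_D))$. With this cocone in hand I invoke hypothesis (3): since $(FL,(Fp_D))$ is a weak colimit of $FUH$, there is a morphism $h\colon FL\to GM$ with $h\circ Fp_D=Gq_D\circ f_D$ for every $D$.

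It remains to verify that $\bigl((L,h,M),(p_D,q_D)_{D\in\fD}\bigr)$ is a compatible cocone for $H$. The triple $(L,h,M)$ is an object of $(F\downarrow G)$ because $h\colon FL\to GM$; each pair $(p_D,q_D)$ is a morphism $H(D)\to(L,h,M)$ exactly because $h\circ Fp_D=Gq_D\circ f_D$ is the defining square of a comma-category morphism, which is precisely the relation produced in the previous step; and cocone-compatibility follows coordinatewise, since $(p_{D'},q_{D'})\circ H(\delta)=(p_{D'}\circ a_\delta,\,q_{D'}\circ b_\delta)=(p_D,q_D)$ by the compatibilities of $(L,(p_D))$ and $(M,(q_D))$.

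Finally, for the uniqueness clause, if $(FL,(Fp_D))$ is an honest colimit of $FUH$ rather than a weak one, then the mediating $h$ with $h\circ Fp_D=Gq_D\circ f_D$ is determined uniquely by the universal property, so $h$ is unique. I do not expect a genuine obstacle: the lemma is essentially bookkeeping, and the only computation demanding care is the cocone-compatibility check above, which is exactly where the comma-morphism identity and the compatibility of the $V\circ H$-cocone must be combined.
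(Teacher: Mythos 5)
Your proof is correct and follows essentially the same route as the paper's: both construct the cocone $(GM,(Gq_D\circ f_D)_{D\in\fD})$ for $FUH$ (the paper phrases this via the horizontal composition $\alpha * 1_H$, you verify it by direct computation with the comma-morphism identity), obtain $h$ from the weak colimit property of $(FL,(Fp_D)_{D\in\fD})$, and check compatibility coordinatewise, with uniqueness in the colimit case coming from the universal property. No gap; your explicit verification of the cocone identity is the one step the paper leaves implicit.
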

\begin{proof}
	Recall $\alpha:FU\Rightarrow GV$ with $\alpha_{(A,f,B)}=f$  is a natural transformation. Consider $\alpha':=\alpha * 1_H:FUH\Rightarrow GVH$ given by $\alpha'_D=\alpha_{HD}$. Then $(GM,(Gq_D\circ\alpha'_D)_{D\in \fD})$ is a compatible cocone for $FUH$. Hence there is a morphism  $h: FL\to GM$ such that for every $D\in \fD$ we have $Gq_D\circ\alpha'_D=h\circ Fp_D$. So indeed, $(p_D,q_D):HD\to (L,h,M)$. In case that $(FL,(Fp_D)_{D\in\fD})$ is a colimit, there is a unique such $h$ such that  $(p_D,q_D):HD\to(L,h,M)$ is a morphism. 
	
	Let $d: D\to D'$ be a morphism of $\fD$. Then $HD=(UHD, \alpha'_D, VHD)$ and $HD'=(UHD',\alpha'_{D'},VHD')$. Moreover, $Hd=(UHd,VHd)$. So we have that the following diagram commutes:
	\[
		\begin{psmatrix}[rowsep=1cm]
			&[name=FUHD]FUHD & [name=GVHD]GVHD\\
			[name=FL]FL & & & [name=GM]GM\\
			&[name=FUHD']FUHD' & [name=GVHD']GVHD'\\[-1cm]
			\ncline{->}{FUHD}{GVHD}^{\alpha'_D}
			\ncline{->}{FUHD'}{GVHD'}^{\alpha'_{D'}}
			\ncline{->}{FUHD}{FUHD'}<{FUHd}
			\ncline{->}{GVHD}{GVHD'}<{GVHd}
			\ncline{->}{FUHD}{FL}^{Fp_D}
			\ncline{->}{FUHD'}{FL}_{Fp_{D'}}
			\ncline{->}{GVHD}{GM}^{Gq_D}
			\ncline{->}{GVHD'}{GM}_{Gq_{D'}}
			\ncarc[arcangle=-90]{->}{FL}{GM}^h
		\end{psmatrix}
	\]
	It follows that $(p_D,q_D)=(p_{D'},q_{D'})\circ Hd$. 
\end{proof}

\begin{lemma}\label{colimits}
	With the notions from above, let $\fD$ be a small category, and let $H:\fD\to (F \downarrow G)$. Suppose that
	\begin{enumerate}
		\item $U\circ H$ has a colimit $(L,(p_D)_{D\in\fD})$,
		\item $V\circ H$ has a colimit $(M,(q_D)_{D\in\fD})$,
		\item $(FL,(Fp_D)_{D\in\fD})$ is a colimit of $FUH$,
	\end{enumerate}
	Then there is a unique morphism $h:FL\to GM$ such that $((L,h,M),(p_D,q_D)_{d\in\fD})$ is a compatible cocone of $H$. Moreover, this cocone is a colimit of $H$.
\end{lemma}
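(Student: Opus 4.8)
The plan is to bootstrap from Lemma~\ref{weak} and then verify the universal property directly, exploiting that all three hypotheses are now genuine colimits rather than merely weak ones.

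First I would obtain the mediating morphism. Since every colimit is in particular a weak colimit, and a colimit cocone is in particular a compatible cocone, hypotheses (1)--(3) of the present lemma imply those of Lemma~\ref{weak}. Applying that lemma yields a morphism $h:FL\to GM$ with $h\circ Fp_D=Gq_D\circ\alpha'_D$ for every $D\in\fD$, making $((L,h,M),(p_D,q_D)_{D\in\fD})$ a compatible cocone of $H$; because $(FL,(Fp_D)_{D\in\fD})$ is here an honest colimit of $FUH$, this $h$ is the \emph{unique} such morphism. As in the proof of Lemma~\ref{weak}, $\alpha'_D=\alpha_{HD}$ denotes the middle component of $HD$.

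Next I would check that this cocone is a colimit. Let $((N,k,P),(r_D,s_D)_{D\in\fD})$ be an arbitrary compatible cocone of $H$. Unwinding the definition of morphisms in $(F\downarrow G)$, the compatibility of the $(r_D,s_D)$ with the arrows of $\fD$ says exactly that $(N,(r_D)_{D\in\fD})$ is a compatible cocone of $UH$ and $(P,(s_D)_{D\in\fD})$ is a compatible cocone of $VH$. By hypotheses (1) and (2), the colimit universal properties of $UH$ and $VH$ produce unique morphisms $r:L\to N$ and $s:M\to P$ with $r\circ p_D=r_D$ and $s\circ q_D=s_D$ for all $D$. The candidate mediating morphism of $H$ is then $(r,s)$.

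The one step requiring care---and the main obstacle---is to show that $(r,s)$ is actually a morphism of $(F\downarrow G)$, i.e.\ that $k\circ Fr=Gs\circ h$ as arrows $FL\to GP$. This is where I would invoke hypothesis (3): since $(FL,(Fp_D)_{D\in\fD})$ is a colimit, the family $(Fp_D)_{D\in\fD}$ is jointly epic, so it suffices to verify the identity after precomposing with each $Fp_D$. On the one hand $k\circ Fr\circ Fp_D=k\circ F(r\circ p_D)=k\circ Fr_D=Gs_D\circ\alpha'_D$, the last equality being the comma-square for the morphism $(r_D,s_D):HD\to(N,k,P)$. On the other hand $Gs\circ h\circ Fp_D=Gs\circ Gq_D\circ\alpha'_D=G(s\circ q_D)\circ\alpha'_D=Gs_D\circ\alpha'_D$, using the defining relation for $h$ together with $s\circ q_D=s_D$. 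The two agree for every $D$, so $k\circ Fr=Gs\circ h$, and hence $(r,s):(L,h,M)\to(N,k,P)$ is a genuine morphism factoring the cocone. Uniqueness is then immediate: any factoring $(r',s')$ satisfies $r'\circ p_D=r_D$ and $s'\circ q_D=s_D$, whence $r'=r$ and $s'=s$ by the uniqueness clauses of the colimits of $UH$ and $VH$. This establishes the colimit property and completes the proof.
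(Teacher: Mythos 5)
Your proof is correct and follows essentially the same route as the paper: obtain $h$ and its uniqueness from Lemma~\ref{weak}, induce $(r,s)$ from the colimits of $UH$ and $VH$, and use hypothesis (3) to verify that $(r,s)$ is a morphism of $(F\downarrow G)$. Your appeal to the joint epicness of the family $(Fp_D)_{D\in\fD}$ is just a repackaging of the paper's argument that $h'\circ Fr$ and $Gs\circ h$ are both mediating morphisms for the same cocone over $FUH$, so the two verifications coincide computation for computation.
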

\begin{proof}
	From Lemma~\ref{weak} it follows that there is a morphism $h:FL\to GM$ such that $((L,h,M),(p_D,q_D)_{D\in\fD})$ is a compatible cocone for $H$. The uniqueness of $h$ follows from Lemma~\ref{weak}, too. So it remains to show that $((L,h,M),(p_D,q_D)_{D\in\fD})$ is a colimit. 
	
	Let $((L',h',M'), (p'_D,q'_D)_{D\in\fD})$ be another compatible cocone for $H$. Then $(L',(p'_D)_{D\in\fD})$ is a compatible cocone for $UH$, and $(M',(q'_D)_{D\in\fD})$ is a compatible cocone for $VH$. Hence, there are unique morphisms $r:L\to L'$ and $s:M\to M'$ such that $r\circ p_D=p'_D$ and $s\circ q_D=q'_D$, for all $D\in\fD$. We will show that $(r,s):(L,h,M)\to (L',h',M')$ is the unique mediating morphism. First we need to show that it is a morphism at all: For this, we use that $(FL,(Fp_D)_{D\in\fD})$ is a colimit of $FUH$. Consider the following diagram:
	\[
		\begin{psmatrix}
		& [name=FL] FL & [name=GM] GM\\
		& [name=FL'] FL' & [name=GM'] GM'\\
		[name=FUHD]FUHD & & & [name=GVHD] GVHD
		\ncline{->}{FL}{GM}^h
		\ncline{->}{FL'}{GM'}^{h'}
		\ncline{->}{FUHD}{GVHD}^{\alpha_D}
		\ncline{->}{FL}{FL'}<{Fr}
		\ncline{->}{GM}{GM'}>{Gs}
		\ncline{->}{FUHD}{FL'}^{Fp'_D}
		\ncline{->}{GVHD}{GM'}^{Gq'_D}
		\ncarc[arcangle=30]{->}{FUHD}{FL}<{Fp_D}
		\ncarc[arcangle=-30]{->}{GVHD}{GM}>{Gq_D}
		\end{psmatrix}
	\]
	The lower quadrangle commutes, because $(p'_D,q'_D)$ is a morphism. We already saw that the two triangles commute. Note that  $(GM', (Gs\circ h\circ Fp_D)_{D\in\fD})$ is a compatible cocone for $FUH$ with the mediating morphism $Gs\circ h$. Now we compute
	\begin{align*}
		h'\circ Fr\circ Fp_D &= h'\circ Fp'_D\\
		&= Gq'_D\circ\alpha_D \\
		&= Gs\circ Gq_D\circ\alpha_D\\
		&= Gs\circ h \circ Fp_D.
	\end{align*}
	Hence, $h'\circ Fr$ is another mediating morphism and we conclude that $h'\circ Fr= Gs\circ h$ and hence $(r,s)$ is a morphism. We already noted, that the two triangles of the above given diagram commute. However, this means that $(r,s)$ is mediating. Let us show the uniqueness of $(r,s)$:
	
	Suppose that $(r',s')$ is another mediating morphism. Then $U(r',s')=r'$ is a mediating morphism between $(L, (p_D)_{D\in\fD})$ and $(L', (p'_D)_{D\in\fD})$, and $V(r',s')=s'$ is a mediating morphism between $(M, (q_D)_{D\in\fD})$ and $(M', (q'_D)_{D\in\fD})$. Hence $r=r'$ and $s=s'$.
\end{proof}

\subsection{Algebroidal categories}
The notion of algebroidal categories goes back to  Banaschewski and Herrlich \cite{BanHer76}. We need this concept in order to be able  to make use of the category-theoretic version of \Fraisse's theorem due to Droste and G\"obel \cite{DroGoe92}. We closely follow the exposition from   \cite{DroGoe92}.

Let $\lambda$ be a regular cardinal. Let us consider $\lambda$ as a category. A \emph{$\lambda$-chain} in $\fC$ is a functor from $\lambda$ to $\fC$. An object $A$ of $\fC$ is called \emph{$\lambda$-small} if whenever $(S,(f_i)_{i\in\lambda})$ is the colimit of a $\lambda$-chain $F$ in $\fC$, and $h:A\to S$, then there exists a $j\in\lambda$, and a morphism $g:A\to F(j)$ such that $h=f_j\circ g$. With $\fC_{<\lambda}$ we will denote the full subcategory of $\fC$ whose objects are all $\lambda$-small objects of $\fC$. The category $\fC$ will be called \emph{semi-$\lambda$-algebroidal} if all $\mu$-chains in $\fC_{<\lambda}$ have a colimit in $\fC$, and if every object of $\fC$ is the colimit of a $\lambda$-chain in $\fC_{<\lambda}$. Moreover, $\fC$ will be called \emph{$\lambda$-algebroidal} if 
\begin{enumerate}
	\item it is semi-$\lambda$-algebroidal, 
	\item $\fC_{<\lambda}$ contains at most $\lambda$ isomorphism classes of objects, and 
	\item between any two objects from $\fC_{<\lambda}$ there are at most $\lambda$ morphisms. 
\end{enumerate}

Let us now have a look onto $\lambda$-small objects in comma-categories. 
\begin{lemma}\label{smallthings}
	Let $\fA$, $\fB$, $\fC$ be categories,  such that $\fA$ and $\fB$ have colimits of  $\lambda$-chains and such that all morphisms of $\fB$ are monomorphisms. Let $F:\fA\to\fC$ be $\lambda$-continuous, and let $G:\fB\to\fC$ any functor that preserves monomorphisms. 
	
	Let  $(A,f,B)$ be an object of $(F \downarrow G)$ such that  $A$ is $\lambda$-small in $\fA$ and $B$ is $\lambda$-small in $\fB$. Then $(A,f,B)$ is $\lambda$-small in $(F \downarrow G)$.
\end{lemma}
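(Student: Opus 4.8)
The plan is to reduce the $\lambda$-smallness of $(A,f,B)$ to the given $\lambda$-smallness of its two components $A$ and $B$, exploiting that colimits of $\lambda$-chains in $(F \downarrow G)$ are computed componentwise. First I would take an arbitrary $\lambda$-chain $H:\lambda\to(F \downarrow G)$ and describe its colimit explicitly. Since $\fA$ and $\fB$ have colimits of $\lambda$-chains, the diagrams $U\circ H$ and $V\circ H$ admit colimits $(L,(p_i)_{i\in\lambda})$ in $\fA$ and $(M,(q_i)_{i\in\lambda})$ in $\fB$. Because $F$ is $\lambda$-continuous, $(FL,(Fp_i)_{i\in\lambda})$ is a colimit of $F\circ U\circ H$, so all three hypotheses of Lemma~\ref{colimits} are satisfied. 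Hence $H$ has the componentwise colimit $((L,h,M),(p_i,q_i)_{i\in\lambda})$ for the uniquely determined structure morphism $h:FL\to GM$. As colimits are unique up to isomorphism and the factorization property defining $\lambda$-smallness is invariant under such isomorphisms, it suffices to test smallness of $(A,f,B)$ against this particular colimit.

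Given a morphism $(a,b):(A,f,B)\to(L,h,M)$, I would then factor each component through a stage of the chain. Since $A$ is $\lambda$-small, $a:A\to L$ factors as $a=p_{j_1}\circ a'$ through some $UH(j_1)$, and since $B$ is $\lambda$-small, $b=q_{j_2}\circ b'$ through some $VH(j_2)$. Using that $\lambda$ is regular, hence directed, I would pass to a common index $j\ge j_1,j_2$ and compose with the chain transition morphisms to obtain $\tilde a:A\to UH(j)$ and $\tilde b:B\to VH(j)$ with $a=p_j\circ\tilde a$ and $b=q_j\circ\tilde b$.

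The crux is to verify that the pair $(\tilde a,\tilde b)$ is genuinely a morphism $(A,f,B)\to H(j)$ in $(F \downarrow G)$, i.e.\ that the defining square commutes at stage $j$. Writing $f_j$ for the structure morphism of $H(j)$, the commutativity of the square for $(a,b)$, combined with $a=p_j\circ\tilde a$, $b=q_j\circ\tilde b$ and the cocone identity $h\circ Fp_j=Gq_j\circ f_j$, yields
\[
 Gq_j\circ f_j\circ F\tilde a = Gq_j\circ G\tilde b\circ f.
\]
At this point the monomorphism hypotheses enter decisively: every morphism of $\fB$ is a monomorphism, so $q_j$ is mono, and $G$ preserves monomorphisms, so $Gq_j$ is mono in $\fC$. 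Cancelling $Gq_j$ on the left gives $f_j\circ F\tilde a=G\tilde b\circ f$, so $(\tilde a,\tilde b)$ is indeed a morphism, and by construction $(p_j,q_j)\circ(\tilde a,\tilde b)=(a,b)$. This is exactly the factorization required by the definition of $\lambda$-smallness, so $(A,f,B)$ is $\lambda$-small.

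The main obstacle is precisely this final verification. The two component factorizations are produced independently from the smallness of $A$ and of $B$, and a priori nothing forces the intermediate square over $H(j)$ to commute; the only leverage is the cancellability of $Gq_j$, which is exactly what the assumptions that all $\fB$-morphisms are monomorphisms and that $G$ preserves monomorphisms supply. The remaining points—existence and componentwise form of the colimit, and reduction to a single representative colimit—are routine once Lemma~\ref{colimits} and the $\lambda$-continuity of $F$ are invoked.
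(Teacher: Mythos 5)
Your proposal is correct and takes essentially the same route as the paper's proof: colimits of $\lambda$-chains in $(F \downarrow G)$ are identified componentwise via Lemma~\ref{colimits}, the two component morphisms into the colimit are factored through a common stage using the $\lambda$-smallness of $A$ and $B$, and the commutativity of the square at that stage is obtained exactly as in the paper by cancelling the monomorphism $Gq_j$ (the paper's $Gb_i$). The only presentational difference—you test smallness against one canonical componentwise colimit and appeal to uniqueness of colimits up to isomorphism, whereas the paper concludes directly that the components of the given colimit cocone are themselves colimits—does not change the substance of the argument.
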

\begin{proof}
	Let $H:\lambda\to (F \downarrow G)$ be a $\lambda$-chain, and let $((L,h,M),(a_i,b_i)_{i\in\lambda})$ be a colimit of $H$. Let $(a,b):(A,f,B)\to(L,h,M)$ be a morphism. Since $\fA$ and $\fB$ have colimits of $\lambda$-chains, it follows that $U\circ H$, and $V\circ H$ have colimits, and since $F$ is $\lambda$-continuous, by Lemma~\ref{colimits}, we have that $(L,(a_i)_{i\in\lambda})$ is a colimit of $U\circ H$, and $(M,(b_i)_{i\in\lambda})$ is a colimit of $V\circ H$. 
	Since $A$ is $\lambda$-small in $\fA$ and $a:A\to L$, there exists an $i\in\lambda$, and a morphism $\hat{a}:A\to UHi$ such that $a_i\circ\hat{a}=a$. Also, since $B$ is $\lambda$-small in $\fB$, and since $b:B\to M$, it follows that there exists some $j\in\lambda$ and a morphism $\hat{b}:B\to VHi$ such that $b_i\circ\hat{b}=b$. Without loss of generality, $i=j$. Consider the following diagram:
	\[
		\begin{psmatrix}
			[name=FL]FL & & & [name=GM]GM\\
			& [name=FA]FA & [name=GB]GB\\
			[name=FUHi]FUHi & & & [name=GVHi]GVHi
			\ncline{->}{FL}{GM}^h
			\ncline{->}{FA}{GB}^f
			\ncline{->}{FUHi}{GVHi}^{h_i}
			\ncline{->}{FA}{FUHi}^{F\hat{a}}
			\ncline{>->}{GB}{GVHi}^{G\hat{b}}
			\ncline{->}{FA}{FL}^{Fa}
			\ncline{>->}{GB}{GM}^{Gb}
			\ncline{->}{FUHi}{FL}<{Fa_i}
			\ncline{>->}{GVHi}{GM}<{Gb_i}
		\end{psmatrix}
	\]   
	where $Hi=(UHi,h_i,VHi)$. By the assumptions, the upper quadrangle and the two triangles of this diagram commute. We compute
	\begin{align*}Gb_i\circ h_i\circ F\hat{a} &= h\circ Fa_i\circ 	F\hat{a}\\
		&=h\circ Fa\\
		&=Gb\circ f\\
		&=Gb_i\circ G\hat{b}\circ f
	\end{align*} 
	Since $Gb_i$ is a monomorphism, we conclude that $G\hat{b}\circ f=h_i\circ F\hat{a}$ whence the whole diagram commutes. Hence $(\hat{a},\hat{b}):(A,f,B)\to Hi$ and $(a_i,b_i)\circ(\hat{a},\hat{b})=(a,b)$.
\end{proof}
Such $\lambda$-small objects $(A,f,B)$ in $(F \downarrow G)$ for which $A$ and $B$ are $\lambda$-small in $\fA$ and $\fB$, respectively, will be called \emph{inherited $\lambda$-small objects}. In principle, there may be non-inherited $\lambda$-small objects in $(F \downarrow G)$. 
\begin{lemma}\label{inherited}
	With the notions from above, if in $(F \downarrow G)$ every object is the colimit of a $\lambda$-chain of inherited $\lambda$-small objects, then every $\lambda$-small object of $(F \downarrow G)$ is inherited.
\end{lemma}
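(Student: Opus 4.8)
The plan is to use the standard principle that a $\lambda$-small object presented as a colimit of a $\lambda$-chain must be a \emph{retract} of one of the stages, combined with the fact that retracts of $\lambda$-small objects are again $\lambda$-small. Crucially, the retract argument passes through the projection functors $U$ and $V$ without any need to know that $U$ or $V$ preserve colimits, so Lemma~\ref{colimits} is not even required here.

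First I would fix an arbitrary $\lambda$-small object $(A,f,B)$ of $(F\downarrow G)$. By the hypothesis of the lemma it can be written as the colimit of a $\lambda$-chain $H:\lambda\to(F\downarrow G)$, with colimit cocone $\big((A,f,B),(\iota_i)_{i\in\lambda}\big)$, in which every stage $Hi=(A_i,f_i,B_i)$ is an inherited $\lambda$-small object; in particular each $A_i$ is $\lambda$-small in $\fA$ and each $B_i$ is $\lambda$-small in $\fB$. Applying the definition of $\lambda$-smallness to the identity morphism $1_{(A,f,B)}\colon(A,f,B)\to(A,f,B)$, viewed as a morphism into the colimit of $H$, produces an index $i\in\lambda$ and a morphism $g:(A,f,B)\to Hi$ with $\iota_i\circ g=1_{(A,f,B)}$. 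Thus $(A,f,B)$ is a retract of the inherited object $(A_i,f_i,B_i)$.

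Next I would apply the projection functors to this retraction. Since functors preserve split idempotents, from $\iota_i\circ g=1_{(A,f,B)}$ we obtain $U\iota_i\circ Ug=1_A$ and $V\iota_i\circ Vg=1_B$, exhibiting $A$ as a retract of $A_i$ in $\fA$ and $B$ as a retract of $B_i$ in $\fB$. Finally I would invoke the elementary fact that, in a category admitting colimits of $\lambda$-chains, a retract of a $\lambda$-small object is $\lambda$-small: if $r\circ s=1_X$ with the codomain of $s$ a $\lambda$-small object $Y$, and $h:X\to S$ is a morphism into a colimit $S$ of a $\lambda$-chain, then smallness of $Y$ factors $h\circ r$ through some stage $Kj$, and precomposing that factorization with $s$ factors $h$ itself through $Kj$. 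Since $\fA$ and $\fB$ both possess colimits of $\lambda$-chains by the standing hypotheses, this shows $A$ is $\lambda$-small in $\fA$ and $B$ is $\lambda$-small in $\fB$, so that $(A,f,B)$ is inherited.

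I expect essentially no serious obstacle beyond careful bookkeeping: the factorization of the identity through a single stage of the chain is the conceptual core and follows at once from the definition of $\lambda$-small, while the transfer along $U$ and $V$ is formal. The one point deserving a moment's care is the auxiliary claim that retracts of $\lambda$-small objects are $\lambda$-small, which does rely on the assumption that $\fA$ and $\fB$ have colimits of $\lambda$-chains so that smallness is a meaningful condition there; I would either state it inline as above or record it as a one-line sublemma.
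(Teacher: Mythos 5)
Your proposal is correct and follows essentially the same route as the paper's own proof: factor the identity $(1_A,1_B)$ through a stage of the chain to exhibit $(A,f,B)$ as a retract of an inherited $\lambda$-small object, project via $U$ and $V$ to get $A$ and $B$ as retracts of $\lambda$-small objects, and invoke the fact that retracts of $\lambda$-small objects are $\lambda$-small. Your only departures are cosmetic: you spell out the proof of the retract fact (which the paper leaves as ``easy to see'') and conclude directly from the definition of inherited rather than citing Lemma~\ref{smallthings}.
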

\begin{proof}
	Let $(A,f,B)$ be $\lambda$-small in $(F \downarrow G)$, let $H:\lambda\to (F \downarrow G)$ such that $Hi=(A_i,f_i,B_i)$ is inherited $\lambda$-small for all $i\in\lambda$, and such that $((A,f,B),(p_i,q_i)_{i\in\lambda})$ is a colimit of $H$. Consider the identity morphism $(1_A,1_B)$ of $(A,f,B)$. Since $(A,f,B)$ is $\lambda$-small, there is some $i\in\lambda$ and some $(a,b):(A,f,B)\to(A_i,f_i,B_i)$ such that $(1_A,1_B)=(p_i,q_i)\circ(a,b)$. In other words, $(A,f,B)$ is a retract of $(A_i,f_i,B_i)$. It follows that $A$ is a retract of $A_i$ and $B$ is a retract of $B_i$. Now it is easy to see that retracts of $\lambda$-small objects are $\lambda$-small. Hence $A$ is $\lambda$-small in $\fA$, and $B$ is $\lambda$-small in $\fB$. By Lemma~\ref{smallthings}, it follows that $(A,f,B)$ is inherited.
\end{proof}
\begin{proposition}\label{semialgebroidal}
	Let $\fA,\fB,\fC$ categories such that $\fA$ and $\fB$ are semi-$\lambda$-algebroidal, and such that all morphisms of $\fB$ are monomorphisms. Let $F:\fA\to\fC$, $G:\fB\to\fC$ be $\lambda$-continuous functors such that $F$ preserves $\lambda$-smallness and $G$ preserves monos.  Then $(F \downarrow G)$ is semi-$\lambda$-algebroidal.
\end{proposition}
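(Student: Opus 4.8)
The plan is to verify the two defining properties of a semi-$\lambda$-algebroidal category for $(F\downarrow G)$: first, that every object is the colimit of a $\lambda$-chain of $\lambda$-small objects (density), and second, that every $\lambda$-chain in $(F\downarrow G)_{<\lambda}$ has a colimit in $(F\downarrow G)$. The two projection functors $U$ and $V$ let me reduce everything to the coordinate categories $\fA$ and $\fB$, and Lemmas~\ref{smallthings}, \ref{colimits} and \ref{inherited} are exactly the bridges I need. The key organisational point is that I must establish density \emph{before} the colimit property, because the latter relies on knowing that the $\lambda$-small objects occurring in a given chain are inherited, and that fact is supplied by Lemma~\ref{inherited}, which in turn presupposes density.

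For density I would start from an arbitrary object $(A,f,B)$ of $(F\downarrow G)$. Since $\fA$ and $\fB$ are semi-$\lambda$-algebroidal, I may write $A=\operatorname{colim}_{i<\lambda}A_i$ and $B=\operatorname{colim}_{i<\lambda}B_i$ as colimits of $\lambda$-chains of $\lambda$-small objects, with colimit cocones $(p_i)$ and $(q_i)$ and chain maps $a_{i,i'}$ and $b_{i,i'}$. For each $i$ the composite $f\circ Fp_i\colon FA_i\to GB$ has $\lambda$-small domain, because $F$ preserves $\lambda$-smallness; and since $G$ is $\lambda$-continuous, $GB=\operatorname{colim}_j GB_j$ with cocone $(Gq_j)$. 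Hence $f\circ Fp_i$ factors as $Gq_{j(i)}\circ g_i$ for some index $j(i)$ and some $g_i\colon FA_i\to GB_{j(i)}$. Using regularity of $\lambda$ together with the freedom to push a factorisation forward along the maps $Gb_{j,j'}$, I can arrange $j\colon\lambda\to\lambda$ to be monotone and cofinal while retaining $Gq_{j(i)}\circ g_i=f\circ Fp_i$. This produces candidate objects $(A_i,g_i,B_{j(i)})$, each inherited $\lambda$-small by Lemma~\ref{smallthings}.

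The main obstacle is coherence: for $(A_i,g_i,B_{j(i)})_{i<\lambda}$ to be a genuine $\lambda$-chain in $(F\downarrow G)$, each connecting pair $(a_{i,i'},b_{j(i),j(i')})$ must make its square commute, i.e. $g_{i'}\circ Fa_{i,i'}=Gb_{j(i),j(i')}\circ g_i$. I cannot force this from the factorisation property of $\lambda$-smallness alone (which here is weaker than $\lambda$-presentability), and this is precisely where the hypotheses on $\fB$ enter. Post-composing either side with $Gq_{j(i')}$ yields $f\circ Fp_i$ in both cases; since every morphism of $\fB$ is a monomorphism, the cocone map $q_{j(i')}\colon B_{j(i')}\to B$ is a monomorphism, and $G$ preserves monomorphisms, so $Gq_{j(i')}$ is left-cancellable. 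The two sides therefore coincide, the squares commute, and the chain is legitimate. Applying Lemma~\ref{colimits} (with $\lambda$-continuity of $F$ giving $FA=\operatorname{colim}_i FA_i$, and $\operatorname{colim}_i B_{j(i)}=B$ by cofinality) shows that the colimit of this chain is $(A,f,B)$, so density holds with inherited $\lambda$-small objects.

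With density in hand, Lemma~\ref{inherited} gives that \emph{every} $\lambda$-small object of $(F\downarrow G)$ is inherited. For the remaining colimit condition I would take any $\lambda$-chain $H$ in $(F\downarrow G)_{<\lambda}$, write $Hi=(A_i,f_i,B_i)$, and invoke this fact to conclude $A_i\in\fA_{<\lambda}$ and $B_i\in\fB_{<\lambda}$. Then $U\circ H$ and $V\circ H$ are $\lambda$-chains of $\lambda$-small objects, hence have colimits in $\fA$ and $\fB$ by semi-$\lambda$-algebroidality; $\lambda$-continuity of $F$ supplies the third hypothesis of Lemma~\ref{colimits}, which then yields a colimit of $H$ in $(F\downarrow G)$. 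Having verified both defining conditions, $(F\downarrow G)$ is semi-$\lambda$-algebroidal.
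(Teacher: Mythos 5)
Your proof is correct and follows essentially the same route as the paper: decompose $A$ and $B$ into $\lambda$-chains of $\lambda$-small objects, factor each $f\circ Fp_i$ through $GB=\operatorname{colim}_j GB_j$ using that $FA_i$ is $\lambda$-small and $G$ is $\lambda$-continuous, reindex monotonely and cofinally, and then settle both density and the chain-colimit condition via Lemma~\ref{colimits}. If anything you are more explicit than the paper at two points: you actually verify commutativity of the connecting squares $g_{i'}\circ Fa_{i,i'}=Gb_{j(i),j(i')}\circ g_i$ by cancelling the monomorphism $Gq_{j(i')}$ (the paper simply asserts that $\chi=(h_i)$ is a natural transformation), and you spell out that Lemma~\ref{inherited}, applied after density is established, is what guarantees that chains in $(F\downarrow G)_{<\lambda}$ have componentwise $\lambda$-small entries so that Lemma~\ref{colimits} applies.
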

\begin{proof}
	Let us first show, that every object of $(F \downarrow G)$ is the colimit of a $\lambda$-chain of inherited $\lambda$-small objects: Let $(A,f,B)\in (F \downarrow G)$. Since $\fA$ is semi-$\lambda$-algebroidal, there is a $\lambda$-chain $H$ of $\lambda$-small objects and morphisms $a_i:Hi\to A$ (for all $i\in\lambda$) such that $(A,(a_i)_{i\in\lambda})$ is a colimit of $H$. Similarly, since $\fB$ is semi-$\lambda$-algebroidal, there is a $\lambda$-chain $K$ in of $\lambda$-small objects in $\fB$ and a family of morphisms $b_i: Ki\to B$ ($i\in\lambda$), such that $(B,(b_i)_{i\in\lambda})$ is a colimit of $K$. Since $G$ is $\lambda$-continuous, we have that $(GB, (Gb_i)_{i\in\lambda})$ is a colimit of $GK$. Since $F$ preserves $\lambda$-smallness, we have that $FHi$ is $\lambda$-small. Hence, there exists a $j=j(i)$ and $b_i: FHi\to GKj(i)$ such that the following diagram commutes:
	\[
	\begin{psmatrix}
		[name=FA]FA & [name=GB]GB\\
		[name=FHi]FHi & [name=GKji]GKj(i)
		\ncline{->}{FA}{GB}^f%
		\ncline{->}{FHi}{GKji}_{h_i}%
		\ncline{->}{FHi}{FA}<{Fa_i}
		\ncline{>->}{GKji}{GB}>{Gb_{j(i)}}%
	\end{psmatrix}
	\]
	Whenever a factoring morphism of $f\circ Fa_i$ exists through $GKj$, then it exists also through $GKj'$ for all $j'>j$. Hence the function $J:\lambda\to\lambda: i\mapsto j(i)$ can be chosen to be increasing in a way that the sequence $(j(i))_{i\in\lambda}$ is cofinal in $\lambda$. By taking $K':= KJ$, we have that $\chi:=(h_i)_{i\in\lambda}$ is a natural transformation from $FH$ to $GJK$. Moreover, by cofinality, we have that $(B, (b_{j(i)})_{i\in\lambda})$ is a colimit of $JK$. By the universal property  of the comma-categories, there exists a unique functor $W:\lambda\to (F \downarrow G)$ such that $UW=H$, $VW=K'$, $\alpha*W=\chi$. It follows that $(A,f,B)$ is a colimit of $W$ and it follows from Lemma~\ref{smallthings} that $Wi$ is $\lambda$-small for all $i\in\lambda$.

	It remains to show that $(F \downarrow G)$ has colimits of all $\mu$-chains for $\mu<\lambda$. However, this is a direct consequence of Lemma~\ref{colimits}.
\end{proof}
In the proof of Proposition~\ref{semialgebroidal} we showed that every object of $(F \downarrow G)$ is the colimit of a $\lambda$-chain of inherited $\lambda$-small objects. From Lemma~\ref{inherited} it follows that under the assumptions of Proposition~\ref{semialgebroidal}, all $\lambda$-small objects of $(F \downarrow G)$ are inherited. This enables us, to formulate the following result:
\begin{proposition}\label{algebroidal}
Let $\fA,\fB,\fC$ categories such that $\fA$ and $\fB$ are $\lambda$-algebroidal, and such that all morphisms of $\fB$ are monomorphisms. Let $F:\fA\to\fC$, $G:\fB\to\fC$ be $\lambda$-continuous functors such that $F$ preserves $\lambda$-smallness and $G$ preserves monos. Additionally, suppose that for all $\lambda$-small objects $A\in\fA_{<\lambda}$, $B\in\fB_{<\lambda}$ there are at most $\lambda$ morphisms between $FA$ and $GB$. Then $(F \downarrow G)$ is $\lambda$-algebroidal.
\end{proposition}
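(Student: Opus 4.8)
The plan is to verify the three defining properties of a $\lambda$-algebroidal category for $(F \downarrow G)$, leaning heavily on what has already been established in the semi-algebroidal case. The hypotheses here are strictly stronger than those of Proposition~\ref{semialgebroidal}, so $(F \downarrow G)$ is already known to be semi-$\lambda$-algebroidal; this disposes of condition~(1). Moreover, as was remarked after the proof of Proposition~\ref{semialgebroidal}, every object of $(F \downarrow G)$ is the colimit of a $\lambda$-chain of inherited $\lambda$-small objects, and hence, by Lemma~\ref{inherited}, \emph{every} $\lambda$-small object of $(F \downarrow G)$ is inherited. This is the crucial structural fact: it guarantees that the objects of $(F \downarrow G)_{<\lambda}$ are precisely the triples $(A,f,B)$ with $A\in\fA_{<\lambda}$, $B\in\fB_{<\lambda}$, and $f:FA\to GB$. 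Thus all counting can be reduced to the two component categories together with the hom-bound between $FA$ and $GB$.

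For condition~(2), I would bound the number of isomorphism classes in $(F \downarrow G)_{<\lambda}$. Since $\fA$ and $\fB$ are $\lambda$-algebroidal, there are at most $\lambda$ isomorphism classes of objects in $\fA_{<\lambda}$ and at most $\lambda$ in $\fB_{<\lambda}$. Fixing representatives $A$ and $B$ of such classes, the additional hypothesis bounds the number of morphisms $f:FA\to GB$ by $\lambda$. Since an isomorphism class of $(A,f,B)$ is determined, up to overcounting, by the classes of $A$ and $B$ together with the morphism $f$, the total count is at most $\lambda\cdot\lambda\cdot\lambda=\lambda$, using that $\lambda$ is an infinite (regular) cardinal.

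For condition~(3), I would count the morphisms between two $\lambda$-small objects $(A,f,B)$ and $(A',f',B')$. Such a morphism is a pair $(a,b)$ with $a:A\to A'$ in $\fA$ and $b:B\to B'$ in $\fB$ satisfying the commutativity of the comma-square. Since $\fA$ and $\fB$ are $\lambda$-algebroidal, there are at most $\lambda$ candidates for $a$ and at most $\lambda$ for $b$, hence at most $\lambda\cdot\lambda=\lambda$ pairs, and a fortiori at most $\lambda$ morphisms in $(F \downarrow G)$. This establishes all three conditions.

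The only place where any real care is needed is the reduction to inherited objects in the first paragraph: without it, there would be no way to bound the number of $\lambda$-small objects, since a priori $(F \downarrow G)$ could harbour non-inherited $\lambda$-small objects not controlled by the algebroidal structure of $\fA$ and $\fB$. Once the inheritance of all $\lambda$-small objects is in hand, the remaining work is routine cardinal arithmetic in $\lambda$.
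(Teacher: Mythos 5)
Your proof is correct and takes essentially the same route as the paper's: invoke Proposition~\ref{semialgebroidal} to get semi-$\lambda$-algebroidality, use the observation (via Lemma~\ref{inherited}) that all $\lambda$-small objects of $(F\downarrow G)$ are inherited, and then bound the isomorphism classes by $\lambda^3=\lambda$ and the hom-sets by $\lambda^2=\lambda$. The only difference is presentational: you spell out the cardinal arithmetic and the reduction to representatives more explicitly than the paper, which states these counts in a single line.
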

\begin{proof}
	Proposition~\ref{semialgebroidal} we have that $(F \downarrow G)$ is semi-$\lambda$-algebroidal. We already noted, that all $\lambda$-small objects of $(F \downarrow G)$ are inherited. By this reason, the number of $\lambda$-small objects in $(F \downarrow G)$ is at most  $\lambda^3=\lambda$. Also, the number of morphisms between $\lambda$-small objects of $(F \downarrow G)$ is at most $\lambda^2=\lambda$. Hence, $(F \downarrow G)$ is $\lambda$-algebroidal. 
\end{proof}

\subsection{The Droste-G\"obel-machine}
In \cite{DroGoe92,DroGoe93}, Droste and G\"obel developed a categorical version of a classical model theoretic theorem by \Fraisse{} that characterizes universal homogeneous countable structures. This generalization is staged in $\lambda$-algebroidal categories, and we need to introduce a few more notions in order to be able to state it.

In the following, let $\fC$ be a category in which all morphisms are monomorphisms. Let $\fC^*$ be a full subcategory of $\fC$. 

Let $U\in\fC$. Then we say that 
\begin{description}
	\item[$U$ is $\fC^*$-universal] if for every $A\in\fC^*$ there is a morphism $f:A\to U$,
	\item[$U$ is $\fC^*$-homogeneous] if for every $A\in\fC^*$ and for all morphisms $f,g:A\to U$ there exists an automorphism $h$ of $U$ such that $h\circ f=g$,
\end{description}

We say that
\begin{description}
	\item[$\fC^*$ has the joint embedding property] if for all $A,B\in\fC^*$ there exists a $C\in\fC^*$ and morphisms $f:A\to C$ and $g:B\to C$,
	\item[$\fC^*$ has the amalgamation property] if for all $A$, $B$, $C$ from $\fC^*$ and $f:A\to B$, $g:A\to C$, there exists $D\in\fC^*$ and $\hat{f}:C\to D$, $\hat{g}:B\to D$ such that the following diagram commutes:
	\[
		\begin{psmatrix}
		[name=A]A & [name=B]B\\
		[name=C]C & [name=D]D.
		\ncline{->}{A}{B}^f%
		\ncline{->}{A}{C}<g%
		\ncline{->}{C}{D}^{\hat{f}}%
		\ncline{->}{B}{D}<{\hat{g}}%
		\end{psmatrix}
	\]
\end{description}

\begin{definition}
	We call a category $\fC$ a \emph{$\lambda$-amalgamation category} if
	\begin{enumerate}
		\item all morphisms of $\fC$ are monomorphisms,
		\item $\fC$ is $\lambda$-algebroidal,
		\item $\fC_{<\lambda}$ has the joint embedding property,
		\item $\fC_{<\lambda}$ has the amalgamation property.
	\end{enumerate}
\end{definition}

Let us state now the result by Droste and G\"obel:
\begin{theorem}[{\cite[Thm.1.1]{DroGoe92}}]\label{DroGoe}
	Let $\lambda$ be a regular cardinal, and let $\fC$ be a $\lambda$-algebroidal category in which all morphisms are monomorphisms. Then there exists a $\fC$-universal, $\fC_{<\lambda}$-homogeneous object in $\fC$ if and only if $\fC$ is a $\lambda$-amalgamation category. Moreover, any two $\fC$-universal, $\fC_{<\lambda}$-homogeneous objects in $\fC$ are isomorphic.
\end{theorem}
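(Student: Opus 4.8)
The plan is to run a categorical version of the classical Fra\"\i{}ss\'e construction and back-and-forth argument, with ``finitely generated'' replaced throughout by $\lambda$-smallness. I would first isolate a single property of an object $U\in\fC$, the \emph{extension property}: for every morphism $m:A\to B$ between $\lambda$-small objects and every morphism $f:A\to U$ there is a morphism $g:B\to U$ with $g\circ m=f$. The theorem then splits into three tasks: (a) a $\fC$-universal, $\fC_{<\lambda}$-homogeneous object is exactly an object that is universal for $\fC_{<\lambda}$ and has the extension property; (b) such an object exists precisely when $\fC_{<\lambda}$ has the joint embedding and amalgamation properties; and (c) any two such objects are isomorphic.

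For the easy direction, suppose $U$ is $\fC$-universal and $\fC_{<\lambda}$-homogeneous. Given $A,B\in\fC_{<\lambda}$, universality yields $A\to U$ and $B\to U$; since $U$ is the colimit of a $\lambda$-chain of $\lambda$-small objects and $A,B$ are $\lambda$-small, by the defining property of $\lambda$-smallness and regularity of $\lambda$ both morphisms factor through one common stage $C\in\fC_{<\lambda}$, which gives the joint embedding property. For amalgamation, given $f:A\to B$ and $g:A\to C$, I compose with universal maps $B\to U$, $C\to U$ to get two morphisms $A\to U$, use a homogeneity automorphism of $U$ to make these two composites agree, and then factor the resulting cocone $B\to U\leftarrow C$ through a single $\lambda$-small stage $D$; because the connecting map $D\to U$ is a monomorphism, the factored square commutes, producing the amalgam. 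Here I rely essentially on all morphisms of $\fC$ being monomorphisms and on the factorization-through-chains that is the content of $\lambda$-smallness.

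For the construction, assume $\fC_{<\lambda}$ has the joint embedding and amalgamation properties. I would build a $\lambda$-chain $W:\lambda\to\fC_{<\lambda}$ by transfinite recursion, driven by a bookkeeping enumeration (continuous, enumerating tasks as the chain is built) of two kinds of tasks: embedding each $\lambda$-small object into some stage, via the joint embedding property, to secure universality; and, for each pair $(m:A\to B,\ f:A\to W_i)$ with $A,B$ $\lambda$-small, amalgamating $W_i\xleftarrow{f}A\xrightarrow{m}B$ via the amalgamation property, to secure the extension property. Since $\fC_{<\lambda}$ has at most $\lambda$ isomorphism classes and at most $\lambda$ morphisms between any two of its objects, there are at most $\lambda$ tasks, so a chain of length $\lambda$ with a suitable pairing discharges them all. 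At successor steps the amalgam is again $\lambda$-small; at limit steps $<\lambda$ the partial colimit stays $\lambda$-small (colimits of $<\lambda$-indexed chains of $\lambda$-small objects are $\lambda$-small, a closure property of $\fC_{<\lambda}$ I would record separately). The colimit $U$ of $W$ exists in $\fC$ by semi-$\lambda$-algebroidality, and $\lambda$-smallness of the objects occurring in the tasks guarantees each task is realized at the colimit, so $U$ is universal for $\fC_{<\lambda}$ and has the extension property; universality for an arbitrary $A\in\fC$ then follows by writing $A$ as a $\lambda$-chain colimit of $\lambda$-small objects and assembling a cocone into $U$ by repeated use of the extension property.

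Finally, homogeneity and uniqueness both come from a categorical back-and-forth. Given $A\in\fC_{<\lambda}$ and $f,g:A\to U$, I construct an automorphism $h$ with $h\circ f=g$ as the colimit of an interleaved zig-zag of morphisms between the stages $W_i$: starting from factorizations of $f$ and $g$ through a stage, I alternately extend the partial map and its would-be inverse using the extension property, producing two mutually compatible cocones whose mediating morphisms are inverse to one another, hence an automorphism of $U$ carrying $f$ to $g$. The same back-and-forth run between two objects, each universal for $\fC_{<\lambda}$ and each with the extension property, yields an isomorphism and thus uniqueness. I expect this back-and-forth to be the main obstacle: without underlying sets, ``extend the partial isomorphism'' must be expressed purely through the extension property and the colimit cocones, and one must verify compatibility and mutual invertibility of the mediating morphisms---precisely the place where the hypothesis that every morphism of $\fC$ is a monomorphism, and hence that factorizations through the chain are unique, is indispensable.
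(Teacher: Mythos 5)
The paper does not prove this theorem at all: it is quoted directly from Droste and G\"obel \cite{DroGoe92}, so there is no internal proof to compare against. Your proposal is, in essence, a correct reconstruction of the original Droste--G\"obel argument: they likewise isolate the extension (injectivity) property with respect to morphisms of $\fC_{<\lambda}$, show that for objects of $\fC$ it is equivalent to $\fC$-universality plus $\fC_{<\lambda}$-homogeneity, build the object under the joint embedding and amalgamation properties as the colimit of a $\lambda$-chain scheduled over the at most $\lambda$ bookkeeping tasks, and get homogeneity and uniqueness by a categorical back-and-forth in which the hypothesis that all morphisms are monomorphisms is exactly what makes factorizations through the chain unique and compatible. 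Two points should be made explicit for $\lambda>\aleph_0$: first, the closure fact you promise to record---that colimits of $\mu$-chains ($\mu<\lambda$) of $\lambda$-small objects are again $\lambda$-small---is true, but its proof itself needs regularity of $\lambda$ and that the colimit injections are monomorphisms; second, the $\lambda$-chains supplied by the algebroidal hypothesis need not be continuous, so at limit ordinals (both when assembling the cocone that witnesses $\fC$-universality for an arbitrary object and inside the transfinite back-and-forth) one cannot just ``take unions''---the repair is to apply your extension property to the canonical comparison morphism from the colimit of the chain-so-far into the limit-stage object, which is a morphism between $\lambda$-small objects by the closure fact. With those two details spelled out, your outline is a complete and faithful proof of the cited theorem.
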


\subsection{A \Fraisse-type theorem for comma-categories}\label{xs6}
Before we can come to the formulation of a sufficient condition that the comma-category of two functors has a universal homogeneous object, we need to introduce some more  notions.

Let $\widehat\fA$ be a category and let $\fA\le\widehat\fA$ be a subcategory. We say that $\fA$ is \emph{isomorphism closed} in $\widehat\fA$ if for all $A\in\fA$ and for every isomorphism $f\in\widehat\fA(A\to B)$ we have that $B\in\fA$ and $f\in\fA(A\to B)$.

We say that $\fA$ has the \emph{free joint embedding property in $\widehat\fA$} if for all $A,B\in\fA$ there exist $C\in\fA$, and $f\in\fA(A\to C)$, and $g\in\fA(B\to C)$ such that $(C,f,g)$ is a weak coproduct in $\widehat\fA$.

We say that $\fA$ has the \emph{free amalgamation property in $\widehat\fA$} if for all $A,B,C\in\fA$ and for  all $f\in\fA(A\to B)$, $g\in\fA(A\to C)$ there exists a $D\in\fA$ and $\hat f\in\fA(C\to D)$, $\hat g\in\fA(B\to D)$ such that the following diagram is a weak pushout-square in $\widehat\fA$:
\[
\begin{psmatrix}
 B & D \\
 A & C
\ncline{->}{1,1}{1,2}^{\hat g}
\ncline{->}{2,1}{2,2}^{g}
\ncline{<-}{1,1}{2,1}<{f}%
\ncline{<-}{1,2}{2,2}<{\hat f}%
\end{psmatrix}
\]

\begin{definition}
	A pair of categories $(\fA,\widehat\fA)$ is called a $\lambda$-amalgamation pair if
	\begin{enumerate}
		\item $\fA\le\widehat\fA$  is isomorphism closed,
		\item all morphisms of $\fA$ are monomorphisms,
		\item $\fA$ is $\lambda$-algebroidal,
		\item $\fA_{<\lambda}$ has the free joint embedding property in $\widehat\fA$, and
		\item $\fA_{<\lambda}$ has the free amalgamation property in $\widehat\fA$.
	\end{enumerate}
\end{definition}

$\lambda$-amalgamation pairs capture the idea of free amalgamation classes and of strict \Fraisse-classes, that we talked about in Section~\ref{s32}.

Now we are ready to link up our previous observations in the following result:
\begin{theorem}\label{mainconstruction}
	Let $(\widehat\fA,\fA)$ be a $\lambda$-amalgamation pair, $\fB$ be a $\lambda$-amalgamation category,  and let $\fC$ be a category.
	Let $\hat F:\widehat\fA\to\fC$, $G:\fB\to\fC$ and let $F$ be the restriction of $\hat{F}$ to $\fA$. Further suppose that
	\begin{enumerate}
		\item $\hat F$ preserves weak coproducts and weak pushouts in $\fA_{<\lambda}$,
		\item $F$ and $G$ are $\lambda$-continuous, 
		\item $F$ preserves $\lambda$-smallness,
		\item $G$ preserves monomorphisms,
		\item for every $A\in\fA_{<\lambda}$ and for every $B\in \fB_{<\lambda}$ there are at most $\lambda$ morphisms in $\fC(FA\to GB)$.
	\end{enumerate}
	Then $(F \downarrow G)$ has a $(F \downarrow G)$-universal, $(F \downarrow G)_{<\lambda}$-homogeneous object. Moreover, up to isomorphism there is just one such object in $(F \downarrow G)$. 
\end{theorem}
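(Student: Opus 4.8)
The plan is to verify that the comma-category $(F \downarrow G)$ is a $\lambda$-amalgamation category and then to invoke the Droste--G\"obel Theorem~\ref{DroGoe}. Of the four defining properties, two are essentially already in hand. That every morphism of $(F \downarrow G)$ is a monomorphism is immediate: such a morphism is a pair $(a,b)$ with $a$ a morphism of $\fA$ and $b$ a morphism of $\fB$, both of which are monos by hypothesis, so $(a,b)$ is cancellable componentwise. That $(F \downarrow G)$ is $\lambda$-algebroidal is exactly the content of Proposition~\ref{algebroidal}, whose hypotheses are precisely our assumptions (2)--(5) together with the fact that $\fA$ and $\fB$ are $\lambda$-algebroidal. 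Crucially, from the discussion following Proposition~\ref{semialgebroidal} we also know that every $\lambda$-small object of $(F \downarrow G)$ is inherited, i.e.\ of the form $(A,f,B)$ with $A\in\fA_{<\lambda}$ and $B\in\fB_{<\lambda}$; this lets me reduce the two remaining properties to the free joint embedding and free amalgamation data in $\fA$ and the ordinary joint embedding and amalgamation data in $\fB$.

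The heart of the argument is the construction of the connecting morphism in $\fC$. For the joint embedding property, given $\lambda$-small $(A,f,B)$ and $(A',f',B')$, I would first take a weak coproduct $(C,j_A,j_{A'})$ of $A,A'$ in $\widehat\fA$ with $C\in\fA_{<\lambda}$ (free joint embedding property of $\fA_{<\lambda}$), and a joint embedding $(D,k_B,k_{B'})$ of $B,B'$ in $\fB_{<\lambda}$. Because $\hat F$ preserves weak coproducts of $\fA_{<\lambda}$, the triple $(FC,Fj_A,Fj_{A'})$ is a weak coproduct in $\fC$; feeding it the morphisms $Gk_B\circ f\colon FA\to GD$ and $Gk_{B'}\circ f'\colon FA'\to GD$ yields some $h\colon FC\to GD$ with $h\circ Fj_A=Gk_B\circ f$ and $h\circ Fj_{A'}=Gk_{B'}\circ f'$. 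These equations say exactly that $(j_A,k_B)$ and $(j_{A'},k_{B'})$ are morphisms of $(F \downarrow G)$ into $(C,h,D)$, and $(C,h,D)$ is inherited $\lambda$-small by Lemma~\ref{smallthings}.

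The amalgamation property is analogous, using weak pushouts. Given a span $(a_1,b_1)\colon(A,f,B)\to(A_1,f_1,B_1)$ and $(a_2,b_2)\colon(A,f,B)\to(A_2,f_2,B_2)$ of $\lambda$-small objects, I would take a free amalgam (weak pushout in $\widehat\fA$) $A_3\in\fA_{<\lambda}$ of $a_1,a_2$ with legs $\alpha_1,\alpha_2$, and an amalgam $B_3\in\fB_{<\lambda}$ of $b_1,b_2$ with legs $\beta_1,\beta_2$, so that $\alpha_1a_1=\alpha_2a_2$ and $\beta_1b_1=\beta_2b_2$. Preservation of weak pushouts makes $(FA_3,F\alpha_1,F\alpha_2)$ a weak pushout in $\fC$. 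The morphisms $G\beta_1\circ f_1$ and $G\beta_2\circ f_2$ form a compatible cocone over it: using that $(a_i,b_i)$ are morphisms (so $f_i\circ Fa_i=Gb_i\circ f$) one computes $G\beta_1\circ f_1\circ Fa_1=G(\beta_1b_1)\circ f=G(\beta_2b_2)\circ f=G\beta_2\circ f_2\circ Fa_2$. The weak pushout therefore supplies $h\colon FA_3\to GB_3$ with $h\circ F\alpha_i=G\beta_i\circ f_i$, making $(\alpha_i,\beta_i)$ morphisms into the inherited $\lambda$-small object $(A_3,h,B_3)$ and the resulting square commute. With all four conditions verified, $(F \downarrow G)$ is a $\lambda$-amalgamation category, and Theorem~\ref{DroGoe} delivers the universal homogeneous object together with its uniqueness.

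I expect the main obstacle to be purely bookkeeping: checking the cocone-compatibility identity $G\beta_1\circ f_1\circ Fa_1=G\beta_2\circ f_2\circ Fa_2$, which is where the hypotheses that $(a_i,b_i)$ are comma-morphisms and that the $\fB$-amalgam commutes genuinely interact, and being careful that \emph{weak} coproducts and pushouts give only the \emph{existence} of $h$, not uniqueness---so no mediating morphism in $(F \downarrow G)$ is asserted, merely the existence required for joint embedding and amalgamation.
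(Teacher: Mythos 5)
Your proposal is correct and follows essentially the same route as the paper: verify that $(F \downarrow G)$ is a $\lambda$-amalgamation category (monomorphisms componentwise, $\lambda$-algebroidal via Proposition~\ref{algebroidal}, and JEP/AP obtained by combining the free constructions in $\widehat\fA$, preserved by $\hat F$, with the JEP/AP of $\fB_{<\lambda}$ and Lemma~\ref{smallthings}), then invoke Theorem~\ref{DroGoe}. The only differences are cosmetic: where the paper cites Lemma~\ref{weak} to produce the connecting morphism $h$, you re-derive it directly from the weak coproduct/pushout property in $\fC$, and you spell out the amalgamation case that the paper dismisses as ``analogous''.
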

\begin{proof}
	By construction, all morphisms of $(F \downarrow G)$ are monomorphisms. 
	From Proposition~\ref{algebroidal}, it follows that $(F \downarrow G)$ is $\lambda$-algebroidal. 
	
	Let $(A_1,f_1,B_1),(A_2,f_2,B_2)\in(F \downarrow G)_{<\lambda}$. Then $A_1,A_2\in F_{<\lambda}$, and $B_1,B_2\in\fB_{<\lambda}$. Since $\fA_{<\lambda}$ has the free joint embedding property in $\widehat\fA$, it follows that there exists a $C\in\fA_{<\lambda}$, $p_{A_1}\in\fA_{<\lambda}(A_1\to C)$, $p_{A_2}\in\fA_{<\lambda}(A_2\to C)$, such that $(C,p_{A_1},p_{A_2})$ is a weak coproduct of $A_1$ and $A_2$ in $\widehat\fA$. By assumption we have $(\hat FC,\hat Fp_{A_1},\hat Fp_{A_2})$ is a weak coproduct of $\hat FA_1$ and $\hat FA_2$ in $\fC$. 
	On the other hand, from the amalgamation property of $\fB_{<\lambda}$ it follows  that there exists $M\in\fB_{<\lambda}$ and morphisms $q_{B_1}\in\fB(B_1\to M)$ and $q_{B_2}\in\fB(B_2\to M)$. From Lemma~\ref{weak}, it follows that there exists an $h:\hat  FC\to GM$ such that $(p_{A_1},q_{B_1}):(A_1,f_1,B_1)\to(C,h,M)$ and $(p_{A_2},q_{B_2}):(A_1,f_1,B_1)\to(C,h,M)$. In other words, $(F \downarrow G)_{<\lambda}$ has the joint embedding property. 
	
	Analogously, it can be shown that $(F \downarrow G)_{<\lambda}$ has the amalgamation property. Now the existence and uniqueness of an $(F \downarrow G)$-universal, and $(F \downarrow G)_{<\lambda}$-homogeneous object in $(F \downarrow G)$ follows from Theorem~\ref{DroGoe}.
\end{proof}

\section{Tying up loose ends}

\subsection{Missing proofs from Section~\ref{s4}}\label{xs4}

\begin{proof}[proof of Theorem~\ref{hom-constraints}]
	Let $\fC:=(\overline{\cU},\to)$, $\fA:=(\overline{\cC},\injto)$, $\widehat\fA:=(\overline{\cC},\to)$ and let $\fB$ be the category that has just one object $I$ and one morphism $1_I$. Let $\hat F:\widehat\fA\to\fC$ be the identical embedding, $F$ be the restriction of $\hat F$ to $\fA$, and let $G:\fB\to\fC$ be the unique functor that maps $I$ to $\bT$. Then a routine check shows that the conditions of Theorem~\ref{mainconstruction} are fulfilled (with $\lambda=\aleph_0$). It remains to note that $(\Col_\cC(\bT),\injto)$ is isomorphic to $(F \downarrow G)$.
\end{proof}
For the proof of Theorem~\ref{hom-constraints2} we need some more preparations:

Let $G$ be a subgroup of $\Aut(\bT)$. Let $(\bA,a)$ and $(\bB,b)$ be $\bT$-colored structures in $\overline{\cC}$. A $G$-embedding is a weak embedding $(f,g)$ such that $g\in G$. 
A countable $\bT$-colored structure $(\bU,u)$ in $\overline{\cC}$ is called \emph{$G$-universal} if for every $(\bA,a)\in\Col_\bC(\bT)$ there exists a $G$-embedding from $(\bA,a)$ to $(\bU,u)$
Moreover, we call $(\bU,h)$ \emph{$G$-homogeneous} if for all finite $(\bA,a)\in\Col_\cC(\bT)$ and all $G$-embeddings $(f_1,g_1),(f_2,g_2):(\bA,a)\to(\bU,u)$ there exists a $G$-automorphism $(f_3,g_3)$ of $(\bU,u)$ such that $(f_3,g_3)\circ(f_1,g_1)=(f_2,g_2)$. 
\begin{proposition}\label{guniversal}
	With the notions from above,  let $G$ be a countable subgroup of $\Aut(\bT)$ . Then there exists a unique (up to isomorphism) $G$-universal and $G$-homogeneous structure $(\bU,u)$ in $\overline{\cC}$. 
\end{proposition}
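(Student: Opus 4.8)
The plan is to obtain $(\bU,u)$ as the universal homogeneous object of a suitable comma-category, exactly along the lines of the proof of Theorem~\ref{hom-constraints}, but with the trivial category $\fB$ there replaced by the group $G$ regarded as a one-object category. Concretely, I would keep $\fC := (\overline{\cU},\to)$, $\fA := (\overline{\cC},\injto)$, $\widehat\fA := (\overline{\cC},\to)$, take $\hat F$ to be the inclusion of $\widehat\fA$ into $\fC$ and $F := \hat F|_{\fA}$, exactly as before. Now let $\fB$ be the category with a single object $I$ and $\fB(I,I) := G$, composition being group multiplication, and let $\mathcal{G}\colon\fB\to\fC$ (not to be confused with the group $G$) be the functor sending $I$ to $\bT$ and each $g\in G$ to the automorphism $g\colon\bT\to\bT$. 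Then the objects of $(F\downarrow\mathcal{G})$ are precisely the $\bT$-colored structures $(\bA,a)$ in $\overline{\cC}$, and a morphism $(\bA,a)\to(\bB,b)$ is a pair $(f,g)$ with $f\colon\bA\injto\bB$ an embedding, $g\in G$, and $b\circ f=g\circ a$ --- that is, exactly a $G$-embedding. Under this identification $(F\downarrow\mathcal{G})$-universality becomes $G$-universality and $(F\downarrow\mathcal{G})_{<\aleph_0}$-homogeneity becomes $G$-homogeneity, because by Lemmata~\ref{smallthings} and~\ref{inherited} the $\aleph_0$-small objects of $(F\downarrow\mathcal{G})$ are the inherited ones, i.e.\ the finite $\bT$-colored structures. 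So it suffices to verify the hypotheses of Theorem~\ref{mainconstruction} with $\lambda=\aleph_0$.

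On the $\fA$-side nothing changes. The pair $(\widehat\fA,\fA)$ is an $\aleph_0$-amalgamation pair and $\hat F$, $F$ satisfy the conditions of Theorem~\ref{mainconstruction} that only involve $\fA,\widehat\fA,\hat F,F$; this is precisely the routine check already performed for Theorem~\ref{hom-constraints}, resting on $\cC$ being free in $\cU$. Condition~(5) is also immediate here: for a finite $\bA\in\fA_{<\aleph_0}$ the hom-set $\fC(F\bA\to\mathcal{G}I)$ is the set of homomorphisms from the finite structure $\bA$ into the countable structure $\bT$, which is countable. Thus the genuinely new work concerns the $\fB$-side.

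The main point --- and the only place where countability of $G$ is used --- is to check that $\fB$ is an $\aleph_0$-amalgamation category and that $\mathcal{G}$ is $\aleph_0$-continuous and preserves monomorphisms. All morphisms of $\fB$ are isomorphisms, hence monomorphisms. Any $\aleph_0$-chain in $\fB$ is a connected diagram whose transition maps are isomorphisms, so it has a colimit, namely $I$ with its canonical cocone of isomorphisms; as the legs of this cocone are isomorphisms, $I$ is $\aleph_0$-small and $\fB_{<\aleph_0}=\fB$. Since $\fB$ has a single isomorphism class of objects and $|\fB(I,I)|=|G|\le\aleph_0$, the category $\fB$ is $\aleph_0$-algebroidal. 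The joint embedding and amalgamation properties of $\fB_{<\aleph_0}$ are trivial in a group: given $f,g\in G$ one amalgamates over $D=I$ via $\hat f:=1_I$ and $\hat g:=g f^{-1}$, and JEP is witnessed by identities. Finally $\mathcal{G}$ preserves monomorphisms (it sends group elements to automorphisms of $\bT$), and it is $\aleph_0$-continuous because the $\mathcal{G}$-image of an $\aleph_0$-chain in $\fB$ is a connected diagram of isomorphisms between copies of $\bT$, whose colimit in $\fC$ is $\bT$ with the evident cocone --- which is exactly the $\mathcal{G}$-image of the $\fB$-colimit.

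With all hypotheses of Theorem~\ref{mainconstruction} in place, $(F\downarrow\mathcal{G})$ has a universal, $(F\downarrow\mathcal{G})_{<\aleph_0}$-homogeneous object, unique up to isomorphism; translating back through the identification above yields the desired $G$-universal, $G$-homogeneous $(\bU,u)$ together with its uniqueness. I expect the only real obstacle to be the verification that $\fB$ is $\aleph_0$-algebroidal --- namely that directed colimits of isomorphism-chains behave as expected and that the countability of $G$ bounds the morphism count --- everything else being inherited essentially verbatim from the proof of Theorem~\ref{hom-constraints}. Note in particular that the special case $G=\{1_\bT\}$ recovers Theorem~\ref{hom-constraints}, which is a useful consistency check on the setup.
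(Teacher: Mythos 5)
Your proposal is correct and follows essentially the same route as the paper: the paper's proof sets up exactly this comma category (the one-object category with morphism set $G$ over $\bT$, mapped into $(\overline{\cU},\to)$ alongside the inclusion of $(\overline{\cC},\injto)$) and invokes Theorem~\ref{mainconstruction} with $\lambda=\aleph_0$, dismissing the hypotheses as a ``routine check.'' Your write-up simply carries out that routine check explicitly (the $\aleph_0$-algebroidal, JEP, AP, and continuity verifications on the $\fB$-side), which is where the countability of $G$ enters, exactly as you note.
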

\begin{proof}
	Let $\fC:=(\overline{\cU},\to)$, $\widehat\fA:=(\overline{\cC},\to)$, $\fA:=(\overline{\cC},\injto)$, and let  $\fB$ be the category that has just one object $\bT$ and whose morphisms are the elements of $G$. Now let $\hat F:\widehat\fA\to\fC$, $G:\fB\to\fC$ be identical embeddings, respectively. Let $F$ be the restriction of $\hat F$ to $\fA$, and let $\fD:=(F \downarrow G)$.
	 
	A routine check shows  that $(\widehat\fA,\fA)$, $\fB$, $\fC$, $\hat F$, $G$ fulfill the assumptions of Theorem~\ref{mainconstruction} (with $\lambda=\aleph_0$). Hence, $\fD$  has (up to isomorphism) a unique $\fD$-universal and $\fD_{<\aleph_0}$-homogeneous object $(\bU,u,\bT)$. 

	Clearly, $(\bU,u)$ is $G$-universal and $G$-homogeneous.
\end{proof}

\begin{proposition}\label{equal-univ}
	Let $(\bU,u)$ be a $G$-universal, $G$-homogeneous $\bT$-colored structure in $\overline\cC$ (here $G$ is an arbitrary subgroup of $\Aut(\bT)$). Then $(\bU,u)$ is also a universal, homogeneous $\bT$-colored structure. 
\end{proposition}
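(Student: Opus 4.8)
The plan is to exploit that a strong embedding of $\bT$-colored structures is precisely a weak embedding whose second component is $1_\bT$, and that a strong automorphism is precisely a $G$-automorphism with second component $1_\bT$; since $1_\bT\in G$, the homogeneity half is then almost immediate. Concretely, given a finite $(\bA,a)\in\Col_\cC(\bT)$ and strong embeddings $\iota_1,\iota_2:(\bA,a)\injto(\bU,u)$, I would view $(\iota_1,1_\bT)$ and $(\iota_2,1_\bT)$ as $G$-embeddings and invoke $G$-homogeneity to obtain a $G$-automorphism $(f,g)$ with $(f,g)\circ(\iota_1,1_\bT)=(\iota_2,1_\bT)$. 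Reading off the two coordinates of the composition $(f\circ\iota_1,\,g\circ 1_\bT)$ forces $g=1_\bT$ and $f\circ\iota_1=\iota_2$; as $g=1_\bT$ gives $u\circ f=u$, the map $f$ lies in $\sAut(\bU,u)$ and witnesses homogeneity.

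The key ingredient for universality is the following lemma: for every $g\in G$ there is a $v\in\Aut(\bU)$ with $u\circ v=g\circ u$, i.e.\ a $G$-automorphism $(v,g)$. To prove it I would apply $G$-homogeneity to the empty $\bT$-colored structure $(\mathbf 0,\varepsilon)$, which lies in $\Col_\cC(\bT)$ since the empty structure has empty age. Writing $\iota_\emptyset$ for the unique embedding of $\mathbf 0$ into $\bU$, both $(\iota_\emptyset,1_\bT)$ and $(\iota_\emptyset,g)$ are $G$-embeddings, their colour conditions being vacuous. Then $G$-homogeneity yields a $G$-automorphism $(v,g')$ with $(v,g')\circ(\iota_\emptyset,1_\bT)=(\iota_\emptyset,g)$, and comparing second coordinates gives $g'=g$, so $(v,g)$ is as required.

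With the lemma in hand, universality follows directly. Let $(\bA,a)\in\Col_\cC(\bT)$ be countable. By $G$-universality there is a $G$-embedding $(f,g):(\bA,a)\injto(\bU,u)$, so $u\circ f=g\circ a$ with $g\in G$. Choosing $(v,g)$ as in the lemma and setting $\iota:=v^{-1}\circ f$, I note that $\iota$ is an embedding of $R$-structures and, using $u\circ v^{-1}=g^{-1}\circ u$ (immediate from $u\circ v=g\circ u$), compute
\[
u\circ\iota=u\circ v^{-1}\circ f=g^{-1}\circ u\circ f=g^{-1}\circ g\circ a=a,
\]
so that $\iota:(\bA,a)\injto(\bU,u)$ is a strong embedding. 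Together with the homogeneity established above, this shows that $(\bU,u)$ is a universal homogeneous $\bT$-colored structure.

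I expect the main obstacle to be exactly the lemma realizing each $g\in G$ as the $\Aut(\bT)$-component of a weak automorphism of $(\bU,u)$. A naive re-embedding of $(\bA,a)$ only shows that the twisted structure $(\bA,g\circ a)$ embeds strongly, and closing the gap back to $(\bA,a)$ is precisely the assertion that $(\bU,u)$ is invariant under twisting by $G$; this is why the empty base object, on which $G$-homogeneity may freely prescribe the colour component, is the crucial device. If one wishes to avoid empty structures, the same lemma can be recovered whenever $u$ is surjective, in particular when $\bT\in\overline\cC$ by Proposition~\ref{retract}.
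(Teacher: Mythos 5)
Your proof is correct, and while the homogeneity half is exactly the paper's argument (compose with $(\iota_i,1_\bT)$, compare second coordinates to force $g=1_\bT$), the universality half takes a genuinely different route. The paper does not prove your twisting lemma. Instead it takes a countable universal homogeneous $\bT$-colored structure $(\widehat\bU,\hat u)$, whose existence is Theorem~\ref{hom-constraints}, picks a $G$-embedding $(f,g):(\widehat\bU,\hat u)\injto(\bU,u)$ using $G$-universality, and then, given a countable $(\bA,a)$, strongly embeds the \emph{twisted} structure $(\bA,g^{-1}\circ a)$ into $(\widehat\bU,\hat u)$ by universality of the latter; composing with $f$ untwists, since $u\circ f\circ \iota=g\circ \hat u\circ\iota = a$. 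So the paper routes the twisting through an auxiliary universal object, whereas you produce the twisting inside $(\bU,u)$ itself, realizing each $g\in G$ as the second component of a weak automorphism via $G$-homogeneity applied to the empty colored structure, and then untwisting the $G$-embedding by $(v^{-1},g^{-1})$. Your version buys self-containedness: it uses only the hypotheses on $(\bU,u)$ and never invokes the existence theorem for universal homogeneous objects, and it isolates a reusable fact, namely that the image of $\pi_2:\wAut(\bU,u)\to\Aut(\bT)$ contains $G$. The paper's version buys independence from any convention about empty structures, at the (harmless, in context) price of depending on Theorem~\ref{hom-constraints}, which is proved beforehand.

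Two caveats. First, your argument stands or falls with the convention that the empty structure is a finite member of $\overline\cC$, so that $(\mathbf{0},\varepsilon)$ is a legitimate input to $G$-homogeneity. The paper does endorse this convention---it asserts that $(\overline\cC,\injto)$ has a finite initial object for any class of relational structures, and only the empty structure can be such an initial object---but you should state this dependence explicitly, since under the stricter model-theoretic convention (nonempty carriers) your key lemma has no proof from the stated hypotheses alone. Second, your closing fallback is not justified: surjectivity of $u$ by itself does not yield the lemma, and the natural attempt (apply $G$-homogeneity to $(\bT,1_\bT)$ with the two $G$-embeddings $(\iota,1_\bT)$ and $(\iota\circ g,g)$) is blocked because $G$-homogeneity is only available for \emph{finite} colored structures, hence works only for finite $\bT$. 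Moreover, citing Proposition~\ref{retract} there is circular, since that proposition presupposes $(\bU,u)$ is universal homogeneous---the very thing being proved; what is true is that surjectivity of $u$ follows directly from $G$-universality applied to $(\bT,1_\bT)$ when $\bT\in\overline\cC$.
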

\begin{proof}
%
	Let $(\widehat{\bU},\hat{u})$ be a countable universal, homogeneous $\bT$-colored structure in $\overline{\cC}$. Then, there exists a $G$-embedding $(f,g):(\widehat{\bU},\hat{u})\injto (\bU,u)$. Now let $(\bA,a)$ be any countable $\bT$-colored structure in $\overline{\cC}$. Then there exists an embedding $\iota:(\bA,g^{-1}\circ a)$ into $(\widehat{\bU},\hat{u})$. But then $f\circ\iota:(\bA,g^{-1}\circ a)\injto(\bU,g^{-1}\circ u)$ is an embedding. Hence also $f\circ\iota:(\bA,a)\injto(\bU,u)$ is an embedding. Consequently, $(\bU, u)$ is universal.
	
	Let now $(\bA,a)$ be any finite $\bT$-colored structure in $\overline\cC$ and let $f_1,f_2:(\bA,a)\injto(\bU,u)$ be embeddings. Then $(f_1,1_\bT)$ and $(f_2,1_\bT)$ are $G$-embeddings. Since $(\bU,u)$ is $G$-homogeneous, there is a $G$-automorphism $(h,g)$ of $(\bU,u)$ such that $(h,g)\circ(f_1,1_\bT)=(f_2,1_\bT)$. That is, $h\circ f_1=f_2$, and $g\circ 1_\bT=1_\bT$. In other words, $g=1_\bT$ and $h$ is an automorphism of $(\bU,u)$. Hence $(\bU,u)$ is homogeneous. 
\end{proof}
A consequence of Proposition~\ref{equal-univ} is, that the construction of the universal homogeneous $\bT$-colored structure is essentially equivalent to the construction of $G$-universal, $G$-homogeneous $\bT$-colored structure in $\overline\cC$. However, the latter construction uncovers more symmetries. 

Now we are ready to prove Theorem~\ref{hom-constraints2}: 
\begin{proof}[Proof of Theorem~\ref{hom-constraints2}]
	Let $(\bU,u)$ be a universal homogeneous $\bT$-colored structure in $\overline{\cC}$.

	Let $(\bA,a)$ be a finite $\bT$-colored structure in $\overline{\cC}$, and let $(f_1,g_1), (f_2,g_2):(\bA,a)\injto(\bU,u)$ be weak embeddings. Let $G$ be the subgroup of $\Aut(\bT)$ that is generated by $g_1$ and $g_2$. Since $G$ is countable, we have, by Proposition~\ref{guniversal} that $(\bU,u)$ is $G$-homogeneous. It follows that there is a $G$-automorphism $(f,g)$ of $(\bU,u)$ such that $(f,g)\circ(f_1,g_1)=(f_2,g_2)$.
	
	Thus, $(\bU,u)$ is w-homogeneous. 
	 
	Any two countable universal w-homogeneous $\bT$-colored structures in $\overline{\cC}$ are homogeneous, by Proposition~\ref{equal-univ}. Hence, by Theorem~\ref{hom-constraints}, they are isomorphic. 
\end{proof}

\subsection{Missing proofs from Section~\ref{s3}} \label{xs3}

\begin{proof}[Proof of Theorem~\ref{univ-struc}]
	\begin{description}
		\item[About 1]From Theorem~\ref{hom-constraints}, it follows that there exists a universal $\bT$-colored structure $(\bU,u)$ in $\overline{\cC}$. So we can choose $\bU_{\cC,\bT}:=\bU$.
		\item[About 2] From Proposition~\ref{hom-const-waut}, it follows that in this case $\cAut(\bU,u)$ is oligomorphic. Since $\cAut(\bU,u)\le\Aut(\bU)$, it follows that $\Aut(\bU)$ is oligomorphic, too.
	\end{description}
	Finally, if $\bT\in\overline{\cC}$, then from Proposition~\ref{retract}, it follows that $u:\bU\to\bT$ is a retraction.
\end{proof}

\bibliographystyle{habbrv} 
\bibliography{PP11arxiv}

\end{document}